\documentclass[11pt,reqno]{article}
\usepackage{amsthm,amsmath, amssymb, amsopn, amsfonts,enumitem}
\usepackage{cases}
\usepackage[margin=1in]{geometry}
\usepackage[colorlinks,citecolor=blue,urlcolor=blue]{hyperref}
\usepackage{graphicx}
\usepackage{tikz}
\usetikzlibrary{decorations.fractals}
\usetikzlibrary{patterns,arrows,shapes,decorations.pathreplacing}
\usepackage{pgfplots}
\usepackage{stmaryrd} 
\usepackage{soul,xcolor}
\setstcolor{red}
\usepackage{color}

\numberwithin{equation}{section}
\theoremstyle{plain}
\newtheorem{Definition}{Definition}[section]
\newtheorem{Remark}{Remark}[section]
\newtheorem{Theorem}{Theorem}[section]
\newtheorem{Lemma}{Lemma}[section]
\newtheorem{Proposition}{Proposition}[section]
\newtheorem{Corollary}{Corollary}[section]
\newtheorem{Assumption}{Assumption}[section]



\newcommand{\be}{\begin{equation}}
\newcommand{\ee}{\end{equation}}
\newcommand{\bee}{\begin{equation*}}
\newcommand{\eee}{\end{equation*}}
\newcommand{\bi}{\begin{itemize}}
\newcommand{\ei}{\end{itemize}}
\def \E{\mathbb{E}}

\def \N{\mathbb{N}}
\def \Z{\mathbb{Z}}
\def \P{\mathbb{P}}

\def \R{\mathbb{R}}
\def \X{\mathbb{X}}

\def\Sk{\mathfrak{S}}

\def \Cc{{\mathcal C}}
\def \Bc{{\mathcal B}}

\def \Fc{{\mathcal F}}
\def \Lc{{\mathcal L}}

\def \Gc{{\mathcal G}}

\def \Tc{{\mathcal T}}

\def \eps{\varepsilon}

\makeatletter
\newcommand{\setword}[2]{%
	\phantomsection
	#1\def\@currentlabel{\unexpanded{#1}}\label{#2}%
}
\makeatother

\title{Equilibria of Time-inconsistent Stopping for One-dimensional Diffusion Processes}

\author{Erhan Bayraktar\thanks{
		Department of Mathematics, University of Michigan, Ann Arbor, email: \texttt{erhan@umich.edu}. E. Bayraktar is partially supported by the National Science Foundation under grant DMS2106556 and by the Susan M. Smith chair.}	
\and Zhenhua Wang\thanks{
		Department of Mathematics, University of Michigan, Ann Arbor, email: \texttt{zhenhuaw@umich.edu}. }
\and Zhou Zhou\thanks{School of Mathematics and Statistics, University of Sydney, Australia, email:
	\texttt{zhou.zhou@sydney.edu.au}.}
}

\begin{document}
\maketitle

\begin{abstract}
We consider three equilibrium concepts proposed in the literature for time-inconsistent stopping problems, including mild equilibria (introduced in \cite{huang2018time}), weak equilibria (introduced in \cite{MR3880244}) and strong equilibria (introduced in \cite{MR4205889}). The discount function is assumed to be log sub-additive and the underlying process is one-dimensional diffusion. We first provide necessary and sufficient conditions for the characterization of weak equilibria. The smooth-fit condition is obtained as a by-product. Next, based on the characterization of weak equilibria, we show that an optimal mild equilibrium is also weak. Then we provide conditions under which a weak equilibrium is strong. We further show that an optimal mild equilibrium is also strong under a certain condition. Finally, we provide several examples including one showing a weak equilibrium may not be strong, and another one showing a strong equilibrium may not be optimal mild.
\end{abstract}

{
\hypersetup{linkcolor=black}
\tableofcontents
}
\newpage

\section{Introduction}
On a filtered probability space $(\Omega,\mathcal{F},(\mathcal{F}_t)_{t\geq 0},\P)$ consider the optimal stopping problem,
\begin{equation}\label{e21}
\sup\limits_{\tau\in \Tc} \E[\delta(\tau)f(X_{\tau})],
\end{equation}
where $\delta(\cdot)$ is a discount function, $X=(X_t)_t$ is a time-homogeneous one-dimensional strong Markov process, and $f(\cdot)$ is a payoff function. It is well known that when $\delta(\cdot)$ is not exponential, the problem can be time-inconsistent in the sense that an optimal stopping rule obtained today may no longer be optimal from a future's perspective. 

One way to deal with this time-inconsistency is to consider the precommitted strategy, i.e., to derive a policy that is optimal with respect to the initial preference and stick to it over the whole planning horizon even if the preference changes later; see e.g., \cite{agram2020reflected, MR3614676}. Another approach to address the time-inconsistency is to look for a sub-game perfect Nash equilibrium; given the future selves follow the equilibrium strategy, the current self has no incentive to deviate from it. For equilibrium strategies we refer to the works \cite{MR4387700, MR2746576, MR2461340, MR4328502, ekeland2006being, hernandez2020me, MR4218119, MR4288523, MR4238770, MR3738841} among others for time-inconsistent control, and \cite{MR4080735, MR4276004, ebert2018never, MR4433814, huang2018time, liang2021weak,MR4332966, bodnariu2022local, MR4124420} and the references therein for time-inconsistent stopping.

How to properly define the notion of an equilibrium is quite subtle in continuous time. There are mainly two streams of research for equilibrium strategies of time-inconsistent stopping problems in continuous time.
In the first stream of research, the following notion of equilibrium is considered.

\begin{Definition}\label{def.mild}
A closed set $S\subset\X$ is said to be  a mild equilibrium, if
\begin{numcases}{}
 \label{e3} f(x)\leq J(x,S),\quad \forall x\notin S,	\\
 \label{e4}f(x)\geq J(x,S),\quad \forall x\in S,
\end{numcases}
where
\begin{equation}\label{e22}
J(x,S):=\E^x[\delta(\rho_S)f(X_{\rho_S})]\quad\text{with}\quad\rho_{S}:=\inf\{t>0: X_t\in S\}\ \text{and}\ \E^x[\cdot]=\E[\cdot|X_0=x].
\end{equation}
\end{Definition}

This kind of equilibrium is first proposed and studied in stopping problems in the context of non-exponential discounting in \cite{huang2018time}. It is called \textit{mild equilibrium} in \cite{MR4205889} to distinguish from other equilibrium concepts. Mild equilibria are further considered in \cite{MR4067078} and \cite{MR4273542} where the time inconsistency is caused by probability distortion and model uncertainty respectively.

Note that $f(x)$ is the value for immediate stopping, and $J(x,S)=\E^x[\delta(\rho_S)f(X_{ \rho_S})]$ is the value for continuing as $\rho_S$ is the first time to enter $S$ after time $0$. As a result, the economic meaning of mild equilibria appears to be clear: in \eqref{e3} when $x\notin S$, it is better to continue and get the value $J$ rather than to stop and get the value $f$. In other words, there is no incentive to deviate from the action of ``continuing". The same reasoning seems to also apply to the other case $x\in S$ in \eqref{e4}, i.e., no incentive for changing the action from ``stopping" to ``continuing". However, this is not really captured in \eqref{e4} after a second thought: In the one dimensional diffusion (and continuous-time Markov chain) setting, under some very non-restrictive condition we have $\rho_S=0$ a.s., and thus \eqref{e4} holds trivially.
	\footnote{In multi-dimensional setting, if $x\in S^o$ then $\rho_S=0$, $\P^x$-a.s.; if $x\in\partial S$ then the identity $\rho_S=0$ requires some regularity of $\partial S$, and consequently, the verification of \eqref{e4} on the boundary may not be trivial.} That is, there is no actual deviation from stopping to continuing captured in \eqref{e4}. 

Because of this issue, mild equilibria are indeed too ``mild": the whole state space is always a mild equilibrium; in most of the examples provided in \cite{huang2018time,MR4116459,MR4273542}, there is a continuum of mild equilibria. As there are often too many mild equilibria in various models, it is natural to consider the problem of equilibrium selection.

\begin{Definition}\label{def.optimal.mild}
A mild equilibrium $S$ is said to be optimal, if for any other mild equilibrium $R$, 
$$\E^x[\delta(\rho_S)f(X_{\rho_S})]\geq \E^x[\delta(\rho_R)f(X_{\rho_R})],\quad \forall\,x\in\X.$$
\end{Definition}

Note that the optimality of a mild equilibrium is defined in the sense of pointwise dominance, which is a very strong condition. The existence of optimal equilibria is first established in \cite{MR3911711} in discrete time models. The existence result is further extended to diffusion models for one-dimensional case in \cite{MR4116459} and multi-dimensional case in \cite{MR4250561}. In particular, for the one-dimensional diffusion case, \cite{MR4116459} shows that under some general assumptions an optimal mild equilibrium exists and is given by the intersection of all mild equilibria (also see Lemma \ref{lm.optimal.mild} below). \cite{MR4116459} also provides an example indicating that in general there may exist multiple optimal mild equilibria.

In the second stream of the research for equilibrium strategies for time-inconsistent stopping in continuous time, the following notion of equilibrium is introduced:\begin{Definition}\label{def.weak}
A closed set $S\subset\X$ is said to be  a weak equilibrium, if
\begin{numcases}{}
 \label{e6} f(x)\leq J(x,S),\quad \forall x\notin S,	\\
 \label{e7}\underset{\varepsilon \searrow 0}{\liminf} \dfrac{f(x)-\E^x[\delta(\rho^\varepsilon_S) f(X_{\rho^\varepsilon_S})]}{\varepsilon} \geq 0,\quad \forall x\in S,
\end{numcases}
where
$$
\rho^\varepsilon_S:=\inf\{t\geq \varepsilon: X_t\in S \}.
$$
\end{Definition}

The weak equilibrium concept for time inconsistent stopping is proposed in \cite{MR3880244}, and further studied in \cite{MR4080735,liang2021weak,MR4332966}. Obviously, as \eqref{e4} trivially holds for one-dimensional process, a weak equilibrium is also mild. Compared to mild equilibria, the condition \eqref{e4} is replaced by \eqref{e7} for weak equilibria using a first order condition. This is analog to the first order condition criterion in time-inconsistent control. As $\rho_S^\eps\geq\eps>0$, the condition \eqref{e7} does capture the deviation from stopping to continuing, and is much stronger than \eqref{e4}. However, there is still a drawback for \eqref{e7}: when the limit is equal to zero, it is possible that for all $\eps>0$ we have $f(x) < \E^x[\delta(\rho^\varepsilon_S) f(X_{\rho^\varepsilon_S})]$, and thus there is an incentive to deviate (see \cite[Remark 3.5]{MR3626618} and \cite{MR4288523,MR4205889,MR4328502} for more details). Roughly speaking, this is similar to a critical point not necessarily being a local maximum in calculus. 

Recently, \cite{MR4205889} investigated the relation between the equilibrium concepts in these two streams of research we described above, and proposed an additional notion of equilibria:

\begin{Definition}\label{def.strong}
A closed set $S\subset\X$ is said to be  a strong equilibrium, if
\begin{numcases}{}
 \notag f(x)\leq J(x,S),\quad \forall x\notin S,	\\
 \label{e8}\exists\varepsilon(x)>0,\ \text{s.t.}\ \forall \varepsilon'\leq \varepsilon(x), f(x)-\E^x[\delta(\rho^{\varepsilon'}_S) f(X_{\rho^{\varepsilon'}_S})] \geq 0,\quad \forall x\in S.
\end{numcases}
\end{Definition}

Note that in the definition of strong equilibrium, the first order condition \eqref{e7} is replaced by a local maximum condition \eqref{e8}. This remedies the issue of weak equilibria mentioned in the above, and captures the economic meaning of ``equilibrium'' more accurately.  Such kind of equilibria is also studied in \cite{MR4288523,MR4328502} for time inconsistent control. Obviously, a strong equilibrium must be weak. In \cite{MR4205889} under continuous-time Markov chain models with non-exponential discounting, a complete relation between mild, optimal mild, weak and strong equilibria is obtained:
\begin{equation}\label{eq.one.direc}
\text{optimal mild $\subsetneqq$ strong $\subsetneqq$ weak $\subsetneqq$ mild}.
\end{equation}
In this paper we aim to establish the result \eqref{eq.one.direc} for one-dimensional diffusion models under non-exponential discounting. Compared to \cite{MR4205889}, the analysis in this paper is much more delicate. The proof in \cite{MR4205889} crucially relies on the discrete state space of the Markov chain setting, and many critical ideas and steps therein cannot be applied in our diffusion framework, where novel approaches are needed for the characterizations of weak and strong equilibria. Here we list the main contributions of our paper as follows.
\bi 
\item We provide a complete characterization (necessary and sufficient conditions) of weak equilibria. As a by-product, we show that any weak equilibrium must satisfy the smooth-fit condition when the pay-off function $f$ is smooth. This gives a much sharper result in a much more general setting as compared to the smooth-fit result obtained in \cite{MR4332966}. (See Remark \ref{rm.smooth.fit} for more details.) Moreover, in our paper $f$ need not to be smooth, and our result also indicates that the smooth-fit condition is a special case of the ``local convexity" property of weak equilibria. See Remark \ref{rm.sf.general}. Undoubtedly such results related to smooth-fit condition has no correspondence in the Markov chain framework in \cite{MR4205889}.

\item We show an optimal mild equilibrium is also a weak equilibrium. This proves that the set of weak equilibria is not empty. In terms of the mathematical method, in \cite{MR4205889} the technique for the proof of such result relies on the fact that removing a point from a stopping region changes the stopping time, which is no longer applicable in the diffusion context. A different approach is developed to overcome this difficulty.

\item We provide a sufficient condition under which a weak equilibrium is also strong. The condition is easy to verify as suggested by our examples. We also show that one may remove some ``inessential" part of an optimal mild equilibrium, and the remaining part is still optimal mild (and thus weak), and in fact strong under an additional assumption. In particular, this result implies that the smallest mild equilibrium essentially has no ``inessential" parts and thus is strong. See Theorem \ref{thm.strong.optimal} and Remark \ref{rm.sk.best}.
\ei 

The rest of the paper is organized as follows. Section \ref{sec:notations} introduces the notation and main assumptions, as well as some auxiliary results that will be used frequently throughout the paper. In Section \ref{sec:weak.chara}, we provide a complete characterization of a weak equilibrium. In Section \ref{sec:ws.optimal}, we show that an optimal mild equilibrium is a weak equilibrium. Next, in Section \ref{sec:strong.optimal} we provide a sufficient condition for a weak equilibrium to be strong. We also demonstrate how to construct a strong equilibrium from an optimal mild equilibrium by removing  ``inessential" parts. In particular, we show that the smallest mild equilibrium is strong under a mild assumption. 
Finally, three examples are provided in Section \ref{sec:eg}. The first example shows that a weak equilibrium may not be strong, while the second example shows that a strong equilibrium may not be optimal mild. The final example is about finding equilibria for the stopping problem of an American put option, which is used to demonstrate the usefulness of the results in Section~\ref{sec:strong.optimal}. Figure \ref{figure} summarizes relations between the results in this paper.
\begin{figure}[h!]
	\centering
	\includegraphics[width=17cm]{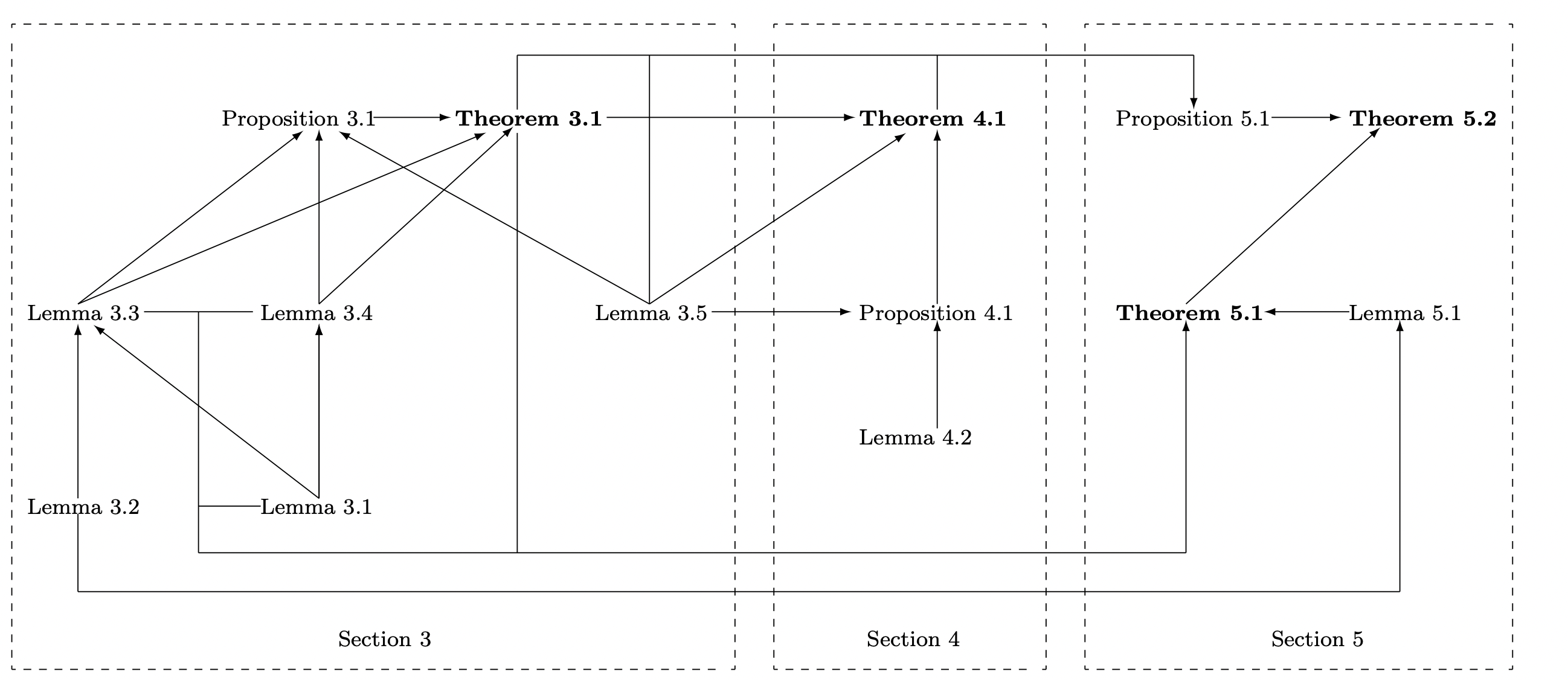}
	\caption{Relations between results in Sections \ref{sec:weak.chara}, \ref{sec:ws.optimal} and \ref{sec:strong.optimal} of this paper. $A\rightarrow B$ means that statement $A$ is used in the proof of statement $B$.}\label{figure}
	\centering
\end{figure}

\section{Setup and Some Auxiliary Results}\label{sec:notations}

Let ($\Omega$, $\Fc$, $(\Fc_t)_{t\geq 0}$, $\P$) be a filtered probability space which supports a standard Brownian motion $W=(W_t)_{t\geq 0}$. 
Let $X=(X_t)_{t\geq 0}$ be a one-dimensional diffusion process with the dynamics
\begin{equation}\label{e1}
dX_t=\mu(X_t)dt+\sigma(X_t)dW_t,
\end{equation}
and take values in an interval $\X\subset \R$. Let $\P^x$ be the probability measure given $X_0=x$ and denote $\E^x[\cdot]=\E[\cdot|X_0=x]$. Let $L^x_t$ be the local time of $X$ at point $x$ up to time $t$. Denote by $\mathcal{T}$ the set of stopping times.

Let $\Bc$ be the family of all Borel subsets within $\X$. For any $A\in \Bc$, denote $A^c:= \X\setminus A$ and $\partial A:= \overline{A}\setminus A^\circ$, where $A^\circ$ is the interior of $A$ and $\overline{A}$ is the closure of $A$ under the Euclidean topology within $\X$. Denote $B(x,r):=(x-r,x+r)\cap\X$. For $A\in\Bc$, we define the first hitting and exit times
\begin{equation}\label{e5}
	\rho_{A}:=\inf\{t>0: X_t\in A\}\quad\text{and}\quad\tau_{A}:=\inf\{t>0: X_t\notin A\}=\rho_{A^c}.
\end{equation}
Given a stopping region $A\in\Bc$, we define the value function $V(t,x,A):[0,\infty)\times \X\rightarrow \R$ as
\be\label{eq.V}
V(t,x,A):= \E^x[\delta(t+ \rho_A)f(X_{ \rho_A})].
\ee
Recall function $J$ defined in \eqref{e22}, we have $J(x,A)=V(0,x,A).$

Denote $\N:=\{1,2,\dotso\}$ and $\Z:=\{0,\pm1,\pm2,\dotso\}$. Given $E\in \Bc$ and $k\in \N\cup\{0\}$, denote by $\Cc^{1,k}([0,\infty)\times E)$ the family of functions $v(t,x)$ that are continuously differentiable with respect to (w.r.t.) $t$ and $k$-times continuously differentiable w.r.t. $x$ when restricted to $[0,\infty)\times E$, and $\Cc^k(E)$ the family of functions $v(x)$ that are $k$-times continuously differentiable when restricted to $E$. \footnote{Continuous differentiability is extended to the boundary in a natural way if the boundary is included in $E$. For example, given $[a,b]\subset \X$, we say $g\in \Cc^{1,2}([0,\infty)\times [a,b])$, if $g=g_1$ on $[0,\infty)\times [a,b]$ for some $g_1\in \Cc^{1,2}([0,\infty)\times \X)$.} For a function $v(t,x):[0,\infty)\times E\to \R$, $v_x, v_{xx}$ (resp. $v_t$) denote the first and second order derivatives w.r.t $x$ (resp. the first order derivative w.r.t. $t$) if the derivatives exist. Moreover, denote by $v_x(t,x-)$ (resp. $v_x(t, x+)$) the left (resp. right) derivative of $v$ w.r.t. $x$ at point $(t,x)$. Similar notation applies to $v_{xx}(t,x-), v_{xx}(t,x+)$. For convenience, we denote $v_t(0,x)$ as the right derivative w.r.t. $t$ at time $t=0$. We further define the parabolic operator
$$
\Lc v(t,x):= v_t(t,x)+\mu(x) v_x(t,x)+\frac{1}{2}\sigma^2(x) v_{xx}(t,x)\quad \text{for a function } v\in \Cc^{1,2}([0,\infty)\times E).
$$
Let us also use the following notation involving left or right derivatives w.r.t. $x$:
$$ 
\Lc v(t,x\pm ):= v_t(t,x)+\mu(x) v_x(t,x\pm )+\frac{1}{2}\sigma^2(x) v_{xx}(t,x\pm ),\quad \forall t\geq 0.\\
$$

We now introduce the main assumptions in this paper. The first assumption concerns $\mu$ and $\sigma$.
\begin{Assumption}\label{assume.x.elliptic}
(i)  $\mu,\sigma: \X\to\R$ are Lipschitz continuous.
(ii) $\sigma^2(x)>0$ for all $x\in\X$.
\end{Assumption}

\begin{Remark}\label{rm.close.regular}
Assumption \ref{assume.x.elliptic}(i) guarantees that \eqref{e1} has a unique strong solution given $X_0=x\in\X$.  Assumption \ref{assume.x.elliptic}(i)(ii) together imply that for any $x\in \X$ and $t>0$,
\begin{equation}\label{e2}
\P^x\left(\min_{0\leq s\leq t}X_s<x\right)=\P^x\left(\max_{0\leq s\leq t}X_s>x\right)=1,\;	\text{and thus }\rho_{\{x\}}=0,\ \P^x\text{-a.s.}.
	\end{equation}	
A quick proof for \eqref{e2} is relegated in Appendix \ref{sec:appendix}.
\end{Remark}
 
Notice that a (time-homogeneous Markovian) stopping policy can be characterized by a stopping region $S\subset\X$. For $S\in \Bc$, \eqref{e2} implies that $\rho_S= \rho_{\overline{S}}$ $\P^x$-a.s. for any $x\in \X$. Also, $f(x)=J(x,S)$ for all $x\in \overline{S}$, and a boundary point $x$ of $S$ corresponds to the action ``immediate stopping", not matter $x$ belongs to $S$ or not.
Therefore, it suffices to work on stopping regions that are closed. 

\begin{Definition}\label{eq.def.admissible}
	$S\in\Bc$ is called an admissible stopping policy, if $S$ is closed (w.r.t the Euclidean topology within $\X$) and for any $x\in\partial S$, one the following two cases holds:
	\bi  
	\item[(\setword{a}{boundary.a})] $x\in \partial (S^\circ)$, i.e., $\exists h>0$ such that either $(x-h,x)\subset S^\circ$ and $(x,x+h)\subset S^c$, or $(x-h,x)\subset S^c$ and $(x,x+h)\subset S^\circ$;
	\item[(\setword{b}{boundary.b})] $x$ is an isolated point, i.e., $B(x,h)\setminus\{x\}\subset S^c $ for some $h>0$.
	\ei
\end{Definition}

\begin{Remark}
	Except cases (\ref{boundary.a}) \& (\ref{boundary.b}), the rest situation for a boundary point $x\in\partial S$ is the following:
	\bi
	\item[(\setword{c}{boundary.c})] There exist two sequences  $(x_n)_{n\in \N}\subset S$ and $(y_n)_{n\in \N}\subset S^c$ such that both $(x_n)_{n\in \N}$ and $(y_n)_{n\in \N}$ approach to $x$ from the left, or  both approach to $x$ from the right.\footnote{Indeed, let $x\in\partial S$. Suppose $x$ does not satisfy case (\ref{boundary.c}). Then there exists $h>0$ such that either $(x-h,x)\subset S$ or $(x-h,x)\subset S^c$, so is the interval $(x,x+h)$. If $(x-h,x)\subset S^c$ and $(x,x+h)\subset S^c$, then $x$ satisfies case (\ref{boundary.b}); otherwise $x$ satisfies case (\ref{boundary.a}).}
	\ei
	Stopping regions containing boundary case (\ref{boundary.c}) lack economic meaning, since it is not practical for an agent to follow a stopping policy classified as case (\ref{boundary.c}). Mathematically, the regularity of $V(t,x,S)$ may also be missing when $S$ contains boundary case (\ref{boundary.c}), e.g., $V_x(t,0+,S)$ may not exist for $S$ being the cantor set on $[0,1]$; this would cause serious issue to establish our main results later as they crucially rely on the regularity of $V(t,x,S)$ (e.g., the characterization of weak equilibria).
	
	Focusing on admissible stopping policies is also well aligned with the literature, and a stopping policy containing boundary case (\ref{boundary.c}) is rarely studied in applications. For instance, all the case studies in \cite{bodnariu2022local, MR4332966, MR3880244, MR4124420, MR4067078} only focus on threshold-type equilibria. The results in \cite{ebert2018never} mainly focuses on two threshold stopping regions. The mild equilibria in all the examples of \cite{huang2018time} have boundaries of cases only (\ref{boundary.a}) and (\ref{boundary.b}). All mild equilibria provided in \cite[Sections 6.1 and 6.2]{MR4116459} are all admissible, so are the mild equilibria in the case study of \cite[Section 4]{MR4273542}.
	
	Let us also point out that all the interesting equilibria (i.e., optimal mild, weak, strong equilibria) provided in all the examples in this paper are admissible. Specifically, in the case study in Section \ref{subsec:eg.GBM}, which can be thought of as a continuation of \cite[Sections 6.3]{MR4116459}, all the weak, strong and optimal mild equilibria are admissible, and any mild equilibria is either admissible or has an admissible alternative (see Remark \ref{rm:eg.admissible}). 
	
	To sum up, focusing on cases (\ref{boundary.a}) and (\ref{boundary.b}) is economically meaningful, mathematically necessary, well-aligned with the literature, and general enough for applications.
\end{Remark}

Let $\delta(\cdot): [0,\infty)\rightarrow [0,1]$ be a discount function that is non-increasing, continuously differentiable, and $\delta(0)=1$, $\delta(t)<1$ for $t>0$. We assume $\delta$ satisfies the following condition. 

\begin{Assumption}\label{assume.delta.deriv}
$\delta$ is log sub-additive:
\be\label{eq.assume.DI} 
\delta(t+s)\geq \delta(t)\delta(s),\quad \forall s,t\geq 0.
\ee
\end{Assumption}

\begin{Remark} Condition \eqref{eq.assume.DI} can be interpreted as the so-called \textit{decreasing impatience} in finance and economics. Many non-exponential discount functions, including hyperbolic, generalized hyperbolic and pseudo-exponential discounting, satisfy \eqref{eq.assume.DI}. See the discussion below \cite[Assumption 3.12]{huang2018time} for a more detailed explanation.
\end{Remark}
Recall that $\delta'(0)$ denotes the right derivative of $\delta(t)$ at $t=0$. The following lemma is a quick result for $\delta$ and the proof is relegated in the Appendix \ref{sec:appendix}.
\begin{Lemma}\label{lm.delta.inequ}
	Let Assumption \ref{assume.delta.deriv} hold. Then 
$$
\delta'(t)\geq \delta(t)\delta'(0),\; \; \text{and}\;\; 1-\delta(t)\leq |\delta'(0)|t,\quad \forall t\geq 0.
$$
\end{Lemma}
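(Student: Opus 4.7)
The plan is to exploit the log sub-additivity by freezing one variable and letting the other tend to zero. For the first inequality, fix $t\geq 0$ and define
$$g(s):=\delta(t+s)-\delta(t)\delta(s), \quad s\geq 0.$$
Assumption \ref{assume.delta.deriv} gives $g(s)\geq 0$ for all $s\geq 0$, while $g(0)=\delta(t)-\delta(t)\cdot 1=0$. Hence $s=0$ is a minimum of $g$ on $[0,\infty)$. Since $\delta\in\Cc^1$, so is $g$, and taking the right derivative at $s=0$ yields
$$0\leq g'(0+)=\delta'(t)-\delta(t)\delta'(0),$$
which is the desired inequality $\delta'(t)\geq \delta(t)\delta'(0)$.

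For the second inequality, I would first note that since $\delta$ is non-increasing with $\delta(0)=1$, one has $\delta'(0)\leq 0$ and $0\leq \delta(t)\leq 1$ for all $t\geq 0$. Multiplying the non-positive number $\delta'(0)$ by $\delta(t)\in[0,1]$ makes it no smaller, so $\delta(t)\delta'(0)\geq \delta'(0)$. Combined with the first inequality,
$$\delta'(t)\geq \delta(t)\delta'(0)\geq \delta'(0), \quad \forall t\geq 0.$$
Integrating this on $[0,t]$ gives $\delta(t)-1\geq \delta'(0)\,t$, i.e., $1-\delta(t)\leq -\delta'(0)\,t=|\delta'(0)|\,t$, which is the second claim.

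There is no real obstacle: the argument is a one-line application of the first-order condition at a minimum for a $\Cc^1$ function, followed by an elementary integration. The only minor care is to use the right derivative at $s=0$ when deducing the first inequality and to keep track of the sign of $\delta'(0)$ when passing to the second; both are handled in the short computation above.
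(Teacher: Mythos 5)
Your proof is correct and follows essentially the same route as the paper's: the first inequality is obtained by differentiating the sub-additivity inequality at the origin (the paper writes out the difference quotient directly, you phrase it as a first-order condition at a minimum of $g$, which is the same computation), and the second is obtained by integrating the resulting pointwise bound on $\delta'$ over $[0,t]$.
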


Let the payoff function $f(x):\X\rightarrow \R$ be non-negative and continuous. We further assume $f$ satisfies the following assumptions.

\begin{Assumption}\label{assume.f.positive}
(i) For any $x\in\X$,
\be\label{eq.assume.sstar'} 
	\lim_{t\to\infty}\delta(t)f(X_t)=0,\quad  \P^x-a.s.,
	\ee  
and there exists $\zeta>0$ such that
\be\label{eq.assume.wellpose}  
\E^x\left[\sup_{t\geq 0} \left(\delta(t)f(X_t)\right)^{1+\zeta}\right]<\infty.
\ee	

\noindent (ii) $f(x)$ belongs to $\Cc^2$ piecewisely. That is, there exists an either finite or countable set $(\theta_{n})_{n\in I}\subset\X$, with $I\subset \Z$ and $\theta_n<\theta_{n+1}$ for all $n\in I$, such that $f\in \Cc^2([\theta_n, \theta_{n+1}])$ for any $n\in I$. We also assume that $\inf_{n\in I}(\theta_{n+1}-\theta_n)>0$ and denote 
\be\label{eq.assume.G} 
\mathcal{G}:= \X\setminus\{\theta_n: n\in I\}.
\ee 
\end{Assumption}

\begin{Remark}\label{rm.f.wellpose}
The assumption \eqref{eq.assume.wellpose} will be used for Lemma \ref{lm.varepsilon.h}, which is an essential lemma for all the main results in the paper. Moreover, \eqref{eq.assume.wellpose} implies that
\be\label{eq.assume.wellpose1}    
\E^x\left[\sup_{t\geq 0} \delta(t)f(X_t)\right]<\infty,\quad \forall x\in\X.
\ee 
This together with \eqref{eq.assume.sstar'} guarantees the well-posedness of $V(t,x,S)$ for any stopping policy $S$.\footnote{\eqref{eq.assume.sstar'} is used for the well-posedness of $V(t,x,S)$, because otherwise $\delta(t)f(X_t)$ is not well defined on $\rho_S=\infty$ (unless we do some extension, e.g. by considering the upper limit $\limsup_{t\to\infty}\delta(t)f(X_t)$.}  \eqref{eq.assume.wellpose1} and \eqref{eq.assume.sstar'} will also be used for applying the dominated convergence theorem in some localization arguments in the proofs later. Furthermore, \eqref{eq.assume.wellpose1} and \eqref{eq.assume.sstar'} also ensure the existence of an optimal mild equilibrium as demonstrated in \cite[Theorem 4.12]{MR4116459} (also see Lemma \ref{lm.optimal.mild} in this paper).
\end{Remark}

Let us make an assumption on $V(t,x,S)$.
\begin{Assumption}\label{assume.new}
	For any admissible stopping policy $S$ and $a,b\in\X$ with $a<b$ and $(a,b)\subset S^c$, $V(t,x,S)$ defined in \eqref{eq.V} (with $A=S$) belongs to $\Cc^{1,2}([0,\infty)\times[a,b])$, and
	\be\label{eq.vx.lineart}
	\limsup_{t\searrow 0}\frac{1}{\sqrt{t}}|V_x(t,x\pm, S)-V_x(0,x\pm, S)|=0,\quad\forall\,x\in\X.
	\ee
\end{Assumption}

\begin{Remark}
It turns out Assumption \ref{assume.new} is quite general.  A sufficient condition for Assumption \ref{assume.new} is that $\delta(t)$ is a \textit{weighted discount function} as shown in the lemma below. One may also directly verify this assumption given the probability density functions of exit time
\begin{equation}\label{eq790}
p(x,t):=\P^x\left(\tau_{(c,d)}\in dt, X_{\tau_{(c,d)}}=c\right)\quad\text{and}\quad q(x,t):=\P^x\left(\tau_{(c,d)}\in dt, X_{\tau_{(c,d)}}=d\right)
\end{equation}
being regular enough. For example, if $X$ is a Brownian motion on $\X=\R$, $\delta(t)=\frac{1}{1+t}$, and $f(x)=0\vee x$, then we can verify Assumption \ref{assume.new} holds by using \eqref{eq790} for the Brownian motion. Providing a more general sufficient condition for Assumption \ref{assume.new} is out of the scope of this paper.
\end{Remark} 

\begin{Lemma}\label{rm.suff.newassume}
Let Assumption \ref{assume.x.elliptic} hold and $f$ be bounded on $\X$. Suppose $\delta(t)$ is a \textit{weighted discount function} of the following form
	\be\label{eq.weight.delta} 
	\delta(t)=\int_0^\infty e^{-rt}dF(r), 
	\ee
	where $F(r):[0,\infty)\rightarrow [0,1]$ is a cumulative distribution function satisfying $\int_0^\infty rdF(r)<\infty$ and
	\be\label{eq.weight.at} 
	\lim_{t\searrow 0}\frac{1 }{\sqrt{t}} \int_0^\infty r(1-e^{-rt})dF(r)=0.
	\ee
Then Assumption~\ref{assume.new} holds.
\end{Lemma}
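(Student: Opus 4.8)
The plan is to use the weighted representation \eqref{eq.weight.delta} to write $V(t,x,S)$, on any continuation interval, as a $dF$-superposition of the classical $r$-harmonic (fundamental) solutions of the generator of $X$, and then to differentiate under the integral sign; the rate \eqref{eq.vx.lineart} will come out of the hypothesis \eqref{eq.weight.at} paired with a uniform-in-$r$ gradient bound on these $r$-harmonic functions. First, since $\delta(0)=1$ the measure $dF$ is a probability measure, and since $f\ge 0$ Tonelli's theorem gives, for every admissible $S$ and every $x\in\X$,
\[
V(t,x,S)=\int_0^\infty e^{-rt}\,V_r(x)\,dF(r),\qquad V_r(x):=\E^x\!\big[e^{-r\rho_S}f(X_{\rho_S})\mathbf 1_{\{\rho_S<\infty\}}\big],
\]
with the payoff understood to be $0$ on $\{\rho_S=\infty\}$ by \eqref{eq.assume.sstar'}; boundedness of $f$ gives $0\le V_r\le\|f\|_\infty$ for all $r\ge 0$.

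Now fix $a,b\in\X$ with $a<b$ and $(a,b)\subset S^c$, and let $(c,d)$ be the connected component of the open set $S^c$ containing $(a,b)$. When an endpoint of $(c,d)$ lies in $\X$ it lies in $S$, so $\rho_S=0$ there a.s.\ by \eqref{e2}; using the strong Markov property at $\tau_{(c,d)}$ together with $X_{\tau_{(c,d)}}\in\partial(c,d)$, one checks by a standard argument that $V_r$ restricted to $(c,d)$ is a bounded classical solution of $\tfrac12\sigma^2u''+\mu u'-ru=0$ on $(c,d)$, hence $V_r|_{(c,d)}=\alpha_r\phi_r+\beta_r\psi_r$ with $|\alpha_r|+|\beta_r|\le 2\|f\|_\infty$, where $\phi_r,\psi_r$ are the two fundamental solutions (and $\alpha_r=f(c)$, $\beta_r=f(d)$ in the generic case $c,d\in S$). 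By elliptic regularity under Assumption~\ref{assume.x.elliptic}, $\phi_r,\psi_r\in\Cc^2$ on $(c,d)$ and extend in $\Cc^2$ up to any endpoint lying in $\X$. The crucial quantitative input is the uniform-in-$r$ estimate, on $[a,b]$ (enlarged slightly inside $(c,d)$ if necessary),
\[
\|\phi_r'\|_\infty+\|\psi_r'\|_\infty\le C(1+\sqrt r),\qquad \|\phi_r''\|_\infty+\|\psi_r''\|_\infty\le C(1+r),\qquad r\ge 0,
\]
which I would obtain by feeding $\|u\|_\infty\le 1$ into the finite-interval interpolation inequality $\|u'\|_\infty^2\le C_1(\|u\|_\infty\|u''\|_\infty+\|u\|_\infty^2)$ and into the ODE-derived bound $\|u''\|_\infty\le C_2(r\|u\|_\infty+\|u'\|_\infty)$ (here $\sigma^2>0$ on the compact interval is used): these combine into a quadratic inequality for $\|u'\|_\infty$ forcing $\|u'\|_\infty\le C(1+\sqrt r)$, after which the ODE bound gives the second-derivative estimate. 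Writing $g_r:=V_r|_{(c,d)}$, we conclude $\|g_r\|_\infty\le 2\|f\|_\infty$, $\|g_r'\|_\infty\le C(1+\sqrt r)$ and $\|g_r''\|_\infty\le C(1+r)$ on $[a,b]$.

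Since $dF$ is a probability measure and $\int_0^\infty r\,dF(r)<\infty$, the functions $\|g_r\|_\infty$, $\|g_r'\|_\infty$, $\|g_r''\|_\infty$ and $r\|g_r\|_\infty$ are all $dF$-integrable (using $\int_0^\infty\sqrt r\,dF\le(\int_0^\infty r\,dF)^{1/2}$). Dominated convergence applied to difference quotients then justifies differentiating under the integral sign,
\[
V_x(t,x,S)=\int_0^\infty e^{-rt}g_r'(x)\,dF(r),\quad V_{xx}(t,x,S)=\int_0^\infty e^{-rt}g_r''(x)\,dF(r),\quad V_t(t,x,S)=-\int_0^\infty re^{-rt}g_r(x)\,dF(r),
\]
and, the integrands being jointly continuous in $(t,x)$ and dominated (uniformly for $t\ge 0$) by the $dF$-integrable functions above, these are jointly continuous on $[0,\infty)\times[a,b]$; hence $V(\cdot,\cdot,S)\in\Cc^{1,2}([0,\infty)\times[a,b])$ in the sense of Section~\ref{sec:notations}. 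For \eqref{eq.vx.lineart}, when a one-sided derivative $V_x(t,x\pm,S)$ is taken from inside a component of $S^c$ (the relevant case at interior points, and at endpoints lying in $\X$) one has $V_x(t,x\pm,S)-V_x(0,x\pm,S)=\int_0^\infty(e^{-rt}-1)g_r'(x\pm)\,dF(r)$, so with $|g_r'|\le C(1+\sqrt r)$ and $0\le 1-e^{-rt}\le\min(1,rt)$,
\[
\frac{|V_x(t,x\pm,S)-V_x(0,x\pm,S)|}{\sqrt t}\le C\int_0^\infty\frac{1-e^{-rt}}{\sqrt t}(1+\sqrt r)\,dF(r)\le 2C\sqrt t+\frac{2C}{\sqrt t}\int_0^\infty r(1-e^{-rt})\,dF(r),
\]
where on $\{r\le 1\}$ we bounded $\frac{1-e^{-rt}}{\sqrt t}(1+\sqrt r)\le 2r\sqrt t$ and on $\{r\ge 1\}$ we used $(1+\sqrt r)(1-e^{-rt})\le 2r(1-e^{-rt})$; both terms on the right vanish as $t\searrow 0$, the second by \eqref{eq.weight.at}. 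When instead the side in question has $S$ containing a neighborhood of $x$, then $V(t,\cdot,S)=\delta(t)f(\cdot)$ there, so $|V_x(t,x\pm,S)-V_x(0,x\pm,S)|/\sqrt t=(1-\delta(t))|f'(x\pm)|/\sqrt t\le|\delta'(0)|\sqrt t\,|f'(x\pm)|\to 0$ by Lemma~\ref{lm.delta.inequ}. Since $S$ is admissible, every $x\in\partial S$ is of type~(\ref{boundary.a}) or~(\ref{boundary.b}), so each of the two one-sided derivatives $V_x(t,x\pm,S)$ falls into one of these two cases; this exhausts $x\in\X$ and finishes the proof.

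The main obstacle is the uniform-in-$r$ gradient bound $\|\phi_r'\|_\infty\le C(1+\sqrt r)$ — with the sharp power $r^{1/2}$, since it is exactly this $\sqrt r$ that pairs with the $\sqrt t$ in \eqref{eq.vx.lineart} and renders \eqref{eq.weight.at} the natural hypothesis — together with the fact that one needs the estimate up to the endpoints of a continuation interval, precisely where the smooth-fit phenomenon lives. The interpolation-plus-ODE argument above handles it; alternatively one may invoke classical estimates for the fundamental solutions of one-dimensional generators (via the scale function, reducing by comparison to the constant-coefficient case). Everything else is routine dominated-convergence bookkeeping.
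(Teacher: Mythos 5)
Your proof is correct and follows the same architecture as the paper's: decompose $V(t,x,S)=\int_0^\infty e^{-rt}v_r(x)\,dF(r)$ by Tonelli, establish a uniform-in-$r$ bound on $v_r'$ and $v_r''$ over $[a,b]$ from the second-order ODE that $v_r$ solves on the continuation interval, integrate against $dF$ using $\int r\,dF(r)<\infty$ for the $\Cc^{1,2}$ claim and \eqref{eq.weight.at} for the rate \eqref{eq.vx.lineart}, and treat the ``one-sided neighborhood in $S$'' case separately via $V(t,\cdot,S)=\delta(t)f(\cdot)$ and Lemma~\ref{lm.delta.inequ}. The one genuine departure is the mechanism for the gradient bound. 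The paper passes to the scale function $\phi$ to remove the drift, uses the maximum principle on the transformed equation $\tfrac12\tilde\sigma^2\tilde u''=r\tilde u$ to get $\|\tilde u''\|_\infty\le Cr$, and integrates from a mean-value point, yielding $\sup_{[a,b]}(|v_r'|+|v_r''|)\le C(1+r)$. You instead feed the one-dimensional Landau interpolation inequality $\|u'\|_\infty^2\le C(\|u\|_\infty\|u''\|_\infty+\|u\|_\infty^2)$ into the ODE-derived bound $\|u''\|_\infty\le C(r\|u\|_\infty+\|u'\|_\infty)$ and solve the quadratic, producing the sharper $\|v_r'\|_\infty\le C(1+\sqrt r)$. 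Both suffice: with the paper's linear-in-$r$ bound, the extra term $\tfrac{1}{\sqrt t}\int(1-e^{-rt})\,dF(r)=\tfrac{1-\delta(t)}{\sqrt t}$ is still $O(\sqrt t)$ by Lemma~\ref{lm.delta.inequ} together with $\int r\,dF<\infty$, so \eqref{eq.weight.at} closes the estimate either way; your $\sqrt r$ pairs more naturally with the $\sqrt t$ rate but buys nothing beyond aesthetics. Two inessential detours in your write-up: the passage to the full connected component $(c,d)$ and the $\phi_r,\psi_r$ decomposition with coefficients $\alpha_r,\beta_r$ (whose claimed bound does not even follow without fixing a normalization) are never actually used --- the interpolation argument only needs $0\le v_r\le\|f\|_\infty$ on $[a,b]$, that $v_r$ solves the ODE on $(a,b)$, and that $v_r\in\Cc^2([a,b])$, which is automatic from uniform ellipticity and continuity of $\mu,\sigma$ on the compact $[a,b]$ with no enlargement of the interval required. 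The paper works directly on $[a,b]$, which is slightly cleaner.
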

The proof of Lemma \ref{rm.suff.newassume} is included in Appendix \ref{sec:appendix}.

\begin{Remark}\label{rm.weight.discount}
In \cite{MR4124420} weighted discount functions are studied in detail. \cite{MR4332966} investigates weak equilibria and the smooth-fit condition for time-inconsistent stopping in a weighted discounting setting. Many discount functions, including exponential, hyperbolic, generalized hyperbolic and pseudo-exponential discounting, satisfy \eqref{eq.weight.delta} and \eqref{eq.weight.at}. For example, a generalized hyperbolic discount function can be written as
	$$\delta(t)=\frac{1}{(1+\beta t)^{\frac{\gamma}{\beta}} }=\int_0^\infty e^{-rt} \frac{r^{\frac{\gamma}{\beta}-1} e^{-\frac{r}{\beta}}}{\beta^{\frac{\gamma}{\beta}} \Gamma(\frac{\gamma}{\beta})}dr=\int_0^\infty e^{-rt}dF(r),\quad\text{with}\ \frac{dF(r)}{dr}=\frac{r^{\frac{\gamma}{\beta}-1} e^{-\frac{r}{\beta}}}{\beta^{\frac{\gamma}{\beta}} \Gamma(\frac{\gamma}{\beta})},$$
where $\beta, \gamma>0$ are constants and $\Gamma(\cdot)$ is the gamma function (see \cite[Section 2.1]{MR4332966}). A direct calculation shows that
	\begin{align*}
		\int_0^\infty r(1-e^{-rt})dF(r)=\int_0^\infty r(1-e^{-rt})\frac{r^{\frac{\gamma}{\beta}-1} e^{-\frac{r}{\beta}}}{\beta^{\frac{\gamma}{\beta}} \Gamma(\frac{\gamma}{\beta})}dr
		=\gamma-\gamma\frac{1}{(1+\beta t)^{\frac{\gamma}{\beta}+1}}
		\leq  \gamma(\gamma+\beta)t\quad \forall t> 0,
	\end{align*}
which implies \eqref{eq.weight.at}.
	\end{Remark}

The next lemma summarizes several preliminary properties of $V(t,x,S)$ which will be used to establish the main results in later sections.

\begin{Lemma}\label{lm.v.c12}
	 Let Assumptions \ref{assume.x.elliptic}, \ref{assume.delta.deriv}, \ref{assume.f.positive}(ii), \ref{assume.new} hold and $S$ be an admissible stopping policy. Then
	\bi  
	\item[(a)] $V(t,x,S)$ belongs to $\Cc^{1,2}([0,\infty)\times \overline{S^c})$, and $V(t,x,S)=\delta(t) f(x)$ for any $(t,x)\in [0,\infty)\times S$. Moreover,
	\be\label{eq.lm.lv0} 
	\Lc V(t,x,S)\equiv 0, \quad \forall (t,x)\in [0,\infty)\times S^c.
	\ee 
	\item[(b)]  $\Lc V(t,x\pm,S)$ exists for all $(t,x)\in [0,\infty)\times \X$. For any $h>0$ and $x_0\in \X$ such that $\overline{B(x_0,h)}\subset \X$,  we have that
	\bee  
	\sup\limits_{(t,x)\in [0, \infty)\times \overline{B(x_0, h)}}|\Lc V(t,x\pm,S)|<\infty.
	\eee
	\ei 
\end{Lemma}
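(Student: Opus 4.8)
The plan is to treat the regions $S$ and $S^c$ separately, obtain the PDE on $S^c$ by a localization and It\^o argument, and then combine the pieces at $\partial S$, where admissibility of $S$ is what makes the patching behave.

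The identity $V(t,x,S)=\delta(t)f(x)$ on $[0,\infty)\times S$ is immediate: for $x\in S$ we have $\{x\}\subseteq S$, so $\rho_S\le\rho_{\{x\}}=0$ $\P^x$-a.s.\ by \eqref{e2}, hence $V(t,x,S)=\E^x[\delta(t)f(X_0)]=\delta(t)f(x)$. For the regularity in (a) I would decompose $S^c$ into its (at most countably many) connected components, each a relatively open interval of $\X$, and read $\Cc^{1,2}([0,\infty)\times\overline{S^c})$ component by component. When both endpoints of such a component $I$ lie in $\X$, $V\in\Cc^{1,2}([0,\infty)\times\overline I)$ is exactly the content of Assumption \ref{assume.new}; when an endpoint of $I$ is not in $\X$, I would apply Assumption \ref{assume.new} on an exhausting family of subintervals with endpoints in $\X$ and use that $\Cc^{1,2}$ is a local notion. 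To get \eqref{eq.lm.lv0}, fix $(t,x)$ with $x\in S^c$, let $I=(a,b)$ be its component, choose $\eps,h>0$ with $\overline{B(x,\eps)}\subset I$, and set $\eta:=\tau_{B(x,\eps)}\wedge h$. Since $X$ remains in $S^c$ on $[0,\eta]$, the strong Markov property (with \eqref{eq.assume.sstar'} taking care of $\{\rho_S=\infty\}$) gives the Dynkin identity $V(t,x,S)=\E^x[V(t+\eta,X_\eta,S)]$. Applying It\^o's formula to $s\mapsto V(t+s,X_s,S)$ on $[0,\eta]$ — legitimate by the $\Cc^{1,2}$ regularity just obtained, with a true martingale term because $V_x\sigma$ is bounded on $\overline{B(x,\eps)}$ — and taking expectations yields $\E^x\big[\int_0^\eta \Lc V(t+s,X_s,S)\,ds\big]=0$; dividing by $\E^x[\eta]$ and letting $\eps,h\searrow0$, continuity of $(s,y)\mapsto\Lc V(t+s,y,S)$ near $(0,x)$ forces $\Lc V(t,x,S)=0$.

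For part (b) I would use that $\X=S^\circ\cup\overline{S^c}$. On $S^\circ$, $V(t,\cdot,S)=\delta(t)f(\cdot)$, so $\Lc V(t,x\pm,S)=\delta'(t)f(x)+\delta(t)\big(\mu(x)f'(x\pm)+\tfrac12\sigma^2(x)f''(x\pm)\big)$, which exists by Assumption \ref{assume.f.positive}(ii); for $x$ interior to a component of $S^c$, $\Lc V(t,x\pm,S)=\Lc V(t,x,S)=0$ by (a); and for $x\in\partial S$, which by admissibility is of type (\ref{boundary.a}) or (\ref{boundary.b}), the one-sided $x$-derivatives of $V$ at $x$ from an $S^c$-side exist by the component-wise $\Cc^{1,2}$ regularity on the corresponding $\overline I$, those from an $S^\circ$-side equal $\delta(t)f'(x\pm)$ and $\delta(t)f''(x\pm)$, and $V_t(t,x,S)=\delta'(t)f(x)$ since $x\in S$, so $\Lc V(t,x\pm,S)$ exists. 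For the uniform bound over $K:=\overline{B(x_0,h)}$, the key observation is that by compactness and admissibility $K$ meets only finitely many components of $S^c$: if not, infinitely many of them would accumulate at some $x^\ast\in K$, forcing $x^\ast$ to be a boundary point of $S$ of type (\ref{boundary.c}), contradicting admissibility. Given this, on $K\cap S^\circ$ the displayed expression is bounded ($|\delta'|\le|\delta'(0)|$ by Lemma \ref{lm.delta.inequ}, $\delta\le1$, $f$ and its one-sided first and second derivatives bounded on the compact $K$ since $f$ is piecewise $\Cc^2$, and $\mu,\sigma$ continuous), while on each of the finitely many sets $K\cap\overline I$ the continuous functions $V_t,V_x(\cdot,\cdot\pm),V_{xx}(\cdot,\cdot\pm)$ are bounded by compactness; the maximum over these finitely many pieces gives $\sup_{(t,x)\in[0,\infty)\times K}|\Lc V(t,x\pm,S)|<\infty$.

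The routine parts are the component decomposition and the compactness bookkeeping in (b). I expect the two genuinely delicate steps to be: (i) the derivation of \eqref{eq.lm.lv0}, namely arranging the localization so that It\^o's formula applies and the stochastic integral is a true martingale, and handling the time variable at $t=0$ (where $V_t$ is a one-sided derivative) and the event $\{\rho_S=\infty\}$; and (ii) the \emph{uniform}-in-$(t,x)$ bound in (b), whose only nontrivial ingredient is the non-accumulation of the components of $S^c$ — this is exactly where the admissibility of $S$ (the exclusion of case (\ref{boundary.c})) is used, and it is what makes ``$V\in\Cc^{1,2}([0,\infty)\times\overline{S^c})$'' amount, on any compact set, to a finite patching of well-behaved pieces.
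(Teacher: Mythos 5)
Your part (a) tracks the paper's proof closely: the identity $V=\delta f$ on $S$ via $\rho_S=0$, the $\Cc^{1,2}$ regularity read off from Assumption \ref{assume.new}, and the derivation of $\Lc V\equiv 0$ on $S^c$ via a localized Dynkin/It\^o identity and dominated convergence. Normalizing by $\E^x[\eta]$ rather than by the elapsed time $\varepsilon$ is a cosmetic variation; the content is the same.

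In part (b), however, there is a concrete error in the uniform-bound step. You bound $\Lc V(t,x\pm,S)$ on $K\cap\overline I$ (with $I$ a component of $S^c$) by asserting that the continuous functions $V_t,V_x,V_{xx}$ are ``bounded by compactness'' there. But the domain in question is $[0,\infty)\times(K\cap\overline I)$, which is \emph{not} compact — the time axis is unbounded — so continuity alone does not give boundedness. The correct justification is already in your own part (a): $\Lc V\equiv 0$ on $[0,\infty)\times S^c$, and by the $\Cc^{1,2}$ regularity this extends by continuity to $\Lc V(t,x\pm,S)=0$ for any one-sided limit taken from within $\overline{S^c}$. That is precisely how the paper handles this side of the case analysis, and it works uniformly in $t$ with no compactness needed. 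For the other side (one-sided limits from within $S$), the explicit formula $\Lc V(t,x\pm,S)=\delta'(t)f(x)+\delta(t)\bigl(\mu(x)f'(x\pm)+\tfrac12\sigma^2(x)f''(x\pm)\bigr)$ combined with $|\delta'(t)|\le|\delta'(0)|$, $\delta\le1$, and the boundedness of $f,f',f''$ on the compact $K$ yields a $t$-uniform bound, as you note. Once these two observations are in place, the pointwise dichotomy given by admissibility — for each $x$, the left (resp.\ right) neighborhood $(x-\bar h,x)$ lies entirely in $S^c$ or entirely in $S$ — already gives the uniform bound; the detour through ``$K$ meets only finitely many components of $S^c$'' is true (and your accumulation-implies-type-(\ref{boundary.c}) argument can be made rigorous), but it is unnecessary machinery. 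The paper's argument bypasses it entirely by arguing pointwise in $x$.
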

The proof of Lemma \ref{lm.v.c12} is provided in Appendix \ref{sec:appendix}. Throughout this paper, we will keep using the following local time integral formula provided in \cite{MR2408999}.
\begin{Lemma}\label{lm.peskir.ito}
Let $a, x_0, b\in\R$ with $a<x_0<b$. Suppose $g(t,y): [0,\infty)\times \R\rightarrow \R$ such that $g\in \Cc^{1,2}((0,\infty)\times (a, x_0])$, $g\in \Cc^{1,2}((0,\infty)\times[x_0, b))$. Then for $X_0=x\in (a,b)$, we have that
	\begin{align*}
		g(t,X_t)=&g(0,x)+\int_0^t \frac{1}{2}(\Lc g(s,X_{s}-)+\Lc g(s, X_s+))ds
		+\int_0^t g_x(s, X_s)\sigma(X_s)\cdot 1_{\{X_s\neq x_0 \}}dW_s\\
		&+\frac{1}{2}\int_0^t(g_x(s,x_0+)-g_x(s,x_0-))dL^{x_0}_s\quad \forall 0\leq t\leq \tau_{(a,b)}.
	\end{align*}
\end{Lemma}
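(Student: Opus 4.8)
The plan is to obtain this as the constant–curve case $t\mapsto x_0$ of Peskir's change-of-variable formula with local time on curves \cite{MR2408999}, via mollification in the space variable. Since the identity only has to hold up to $\tau_{(a,b)}$ and every term on both sides is continuous in $t$, a routine localization does the reduction: stop $X$ at $\sigma_n:=\tau_{(a_n,b_n)}$ for compact $[a_n,b_n]\subset(a,b)$ with $x_0\in(a_n,b_n)$, $a_n\downarrow a$, $b_n\uparrow b$, prove the formula on $[0,\sigma_n]$, and let $n\to\infty$ (so $\sigma_n\uparrow\tau_{(a,b)}$). On each such interval $X$ lives in a fixed compact subinterval of $(a,b)$ where $g$ and its one-sided derivatives $g_t,g_x,g_{xx}$ are bounded. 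I would first record two elementary facts: (i) $g$ is $\Cc^1$ in $t$ with $g_t$ jointly continuous across $x_0$, because $g(\cdot,x_0)$ is a single well-defined continuous function of $t$ (as $g$ is continuous across $x_0$), so both one-sided-in-$x$ limits $g_t(t,x_0\pm)$ equal $\partial_t g(t,x_0)$; and (ii) consequently the kink $\ell(t):=g_x(t,x_0+)-g_x(t,x_0-)$ is continuous in $t$.

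Next I would fix a standard symmetric mollifier $(\varphi_\epsilon)_{\epsilon>0}$ on $\R$ and set $g^\epsilon(t,x):=\int_\R g(t,x-y)\varphi_\epsilon(y)\,dy$, after extending the two closed half-pieces of $g$ to $\Cc^{1,2}$ functions on $[0,\infty)\times\R$ and gluing them to a single continuous $\tilde g$, so that $g^\epsilon=g$ on the compact region once $\epsilon$ is small. Then $g^\epsilon\in\Cc^{1,2}$, with $g^\epsilon\to g$, $g^\epsilon_t\to g_t$ uniformly on compacts, $g^\epsilon_x(t,x)\to g_x(t,x)$ for every fixed $x\neq x_0$, $g^\epsilon_x(t,x_0)\to\tfrac12(g_x(t,x_0+)+g_x(t,x_0-))$, and, crucially,
\[
g^\epsilon_{xx}(t,x)=\bigl(g_{xx}*\varphi_\epsilon\bigr)(t,x)+\ell(t)\,\varphi_\epsilon(x-x_0),
\]
where $g_{xx}$ denotes the bounded, a.e.\ defined second derivative away from $x_0$; the first summand is uniformly bounded and converges pointwise (off $x_0$) to $g_{xx}$, while the second is the spike that will produce the local time.

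Then I would apply the classical It\^o formula to $g^\epsilon(t\wedge\sigma_n,X_{t\wedge\sigma_n})$ and let $\epsilon\searrow0$ term by term. The left side converges to $g(t\wedge\sigma_n,X_{t\wedge\sigma_n})$ and $g^\epsilon(0,x)\to g(0,x)$ by uniform convergence. For the stochastic integral, $g^\epsilon_x(s,X_s)\to g_x(s,X_s)\mathbf 1_{\{X_s\neq x_0\}}$ for $ds\times d\P$-a.e.\ $(s,\omega)$ (the value on $\{X_s=x_0\}$ is irrelevant, that set being $ds$-null by the occupation-time formula and $\sigma^2(x_0)>0$) and is uniformly bounded, so by the It\^o isometry and dominated convergence $\int_0^{\cdot}\sigma(X_s)g^\epsilon_x(s,X_s)\,dW_s\to\int_0^{\cdot}\sigma(X_s)g_x(s,X_s)\mathbf 1_{\{X_s\neq x_0\}}\,dW_s$. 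For the drift, write $\Lc g^\epsilon(s,X_s)$ as a uniformly bounded part that converges $ds\times d\P$-a.e.\ to $\tfrac12\bigl(\Lc g(s,X_s-)+\Lc g(s,X_s+)\bigr)$ (using the convergences above and that the one-sided values of $g_x,g_{xx}$ coincide off $x_0$) plus the spike term $\tfrac12\sigma^2(X_s)\ell(s)\varphi_\epsilon(X_s-x_0)$; the former integrates to $\int_0^{\cdot}\tfrac12(\Lc g(s,X_s-)+\Lc g(s,X_s+))\,ds$ by dominated convergence, and the latter satisfies
\[
\tfrac12\int_0^{\cdot}\sigma^2(X_s)\,\ell(s)\,\varphi_\epsilon(X_s-x_0)\,ds\ \longrightarrow\ \tfrac12\int_0^{\cdot}\ell(s)\,dL^{x_0}_s
\]
by the standard approximation of local time by continuous additive functionals, applied with the continuous weight $\ell$. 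Since $\ell(s)=g_x(s,x_0+)-g_x(s,x_0-)$, collecting the limits gives the asserted identity on $[0,\sigma_n]$; letting $n\to\infty$ and using continuity in $t$ extends it to $[0,\tau_{(a,b)}]$.

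The hard part will be the two convergences near $x_0$: one must verify that the only singular contribution from $g^\epsilon_{xx}$ is the spike $\ell(t)\varphi_\epsilon(x-x_0)$ — i.e.\ that $g_{xx}*\varphi_\epsilon$ stays uniformly bounded — and that $\int_0^{\cdot}\sigma^2(X_s)\varphi_\epsilon(X_s-x_0)\,ds\to L^{x_0}_{\cdot}$ uniformly on compacts in probability (which rests on $a\mapsto L^a_t$ being continuous at $x_0$, valid here because $\int_0^t\mathbf 1_{\{X_s=x_0\}}\,ds=0$ a.s.), upgraded to $\int\sigma^2(X_s)\ell(s)\varphi_\epsilon(X_s-x_0)\,ds\to\int\ell(s)\,dL^{x_0}_s$ for the continuous integrand $\ell$. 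Everything else is bookkeeping around the $ds$-null set $\{X_s=x_0\}$ and the standard localization.
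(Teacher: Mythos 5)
The paper does not give a proof of Lemma~\ref{lm.peskir.ito} at all: it is cited verbatim from Peskir's change-of-variable formula with local time on curves \cite{MR2408999}, specialized to the constant curve $t\mapsto x_0$. What you have written is therefore not a variant of the paper's argument but a self-contained re-derivation. It is a legitimate and essentially correct one. The decomposition $g^\epsilon_{xx}=g_{xx}*\varphi_\epsilon+\ell(t)\varphi_\epsilon(\cdot-x_0)$, obtained by two integrations by parts across $x_0$, is exactly right; the $ds$-nullity of $\{X_s=x_0\}$ follows from the occupation-time formula together with $\sigma^2(x_0)>0$; and the limits of the bounded pieces by dominated convergence and of the martingale part by It\^o isometry are sound. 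This is in fact close in spirit to how Peskir himself proves the general curve case (mollification of the jump in $g_x$ plus occupation-time approximation of local time), so you have effectively reproduced a proof of the cited result rather than found an alternative route.

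Two points deserve to be tightened. First, the step that turns the spike into a Stieltjes integral with a weight is the only genuinely nontrivial convergence: you need $\int_0^\cdot\sigma^2(X_s)\ell(s)\varphi_\epsilon(X_s-x_0)\,ds\to\int_0^\cdot\ell(s)\,dL^{x_0}_s$, and the cleanest way is to note that $A^\epsilon_t:=\int_0^t\sigma^2(X_s)\varphi_\epsilon(X_s-x_0)\,ds=\int_\R\varphi_\epsilon(a-x_0)L^a_t\,da\to L^{x_0}_t$ a.s.\ for each $t$ (by joint continuity of $(a,t)\mapsto L^a_t$, valid here since $\sigma$ is Lipschitz and nondegenerate), that the $A^\epsilon$ and the limit are nondecreasing and the limit is continuous, so convergence is a.s.\ uniform on compacts in $t$ (P\'olya/Dini), and hence $dA^\epsilon\Rightarrow dL^{x_0}$ weakly, giving the claimed limit for the continuous bounded weight $\ell$. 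You flag this but do not carry it out; it is the crux. Second, the hypotheses of the lemma only assert $C^{1,2}$ regularity on the \emph{open} time interval $(0,\infty)$, while the identity is stated from $t=0$; your mollification argument, like Peskir's statement, implicitly needs $g$ and its relevant derivatives to extend one-sidedly to $t=0$ on each half-piece (or an additional approximation from $t=\eta>0$). In the paper this causes no trouble because the functions to which the lemma is applied (e.g.\ $V$ under Assumption~\ref{assume.new}) are $C^{1,2}$ on $[0,\infty)\times\cdot$, but it is worth saying explicitly that you are reading the hypothesis as including the boundary $t=0+$.
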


\section{Characterization for Weak Equilibria}\label{sec:weak.chara}
In this section, we provide the characterization for weak equilibria. Such characterization is critical to study of the relations between mild, weak and strong equilibria. Below is the main result of this section.

\begin{Theorem}\label{prop.weak.cha}
	Let Assumptions \ref{assume.x.elliptic}--\ref{assume.new} hold. Suppose $S$ is an admissible stopping policy. Then $S$ is a weak equilibrium if and only if the followings are satisfied.
\begin{align}
	\label{eq789} &	V(0,x,S)\geq f(x)\quad \forall x\notin S;\\
	\label{eq.weak.equicha1} & 	V_x(0,x-,S)\geq V_x(0,x+,S)\quad \forall  x\in S;\\
	\label{eq.weak.equicha2}  &\Lc V(0,x-,S)\vee \Lc V(0,x+,S)\leq 0\quad  \forall x\in \X.
\end{align}
\end{Theorem}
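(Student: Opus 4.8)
The plan is to characterize the weak equilibrium conditions \eqref{e6}--\eqref{e7} pointwise on $S$ by Taylor-expanding the quantity $\E^x[\delta(\rho^\varepsilon_S) f(X_{\rho^\varepsilon_S})]$ in $\varepsilon$ and reading off the sign of the first-order term. The key observation is that $\rho^\varepsilon_S = \varepsilon + \rho_S \circ \theta_\varepsilon$ (shift by $\varepsilon$), so by the Markov property
$$
\E^x[\delta(\rho^\varepsilon_S) f(X_{\rho^\varepsilon_S})] = \E^x\big[ V(\varepsilon, X_\varepsilon, S) \big],
$$
with $V$ as in \eqref{eq.V}. Thus the numerator in \eqref{e7} is $f(x) - \E^x[V(\varepsilon, X_\varepsilon, S)]$, and since $V(0,x,S)=\delta(0)f(x)=f(x)$ for $x\in S$ (recall $f(x)=J(x,S)$ on $\overline S$, Lemma \ref{lm.v.c12}(a)), this equals $\E^x[V(0,x,S) - V(\varepsilon, X_\varepsilon, S)]$. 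The idea is to apply the local time–Itô formula of Lemma \ref{lm.peskir.ito} to $V(\cdot,\cdot,S)$ between time $0$ and time $\varepsilon$ — note $V\in\Cc^{1,2}$ on each side of any point by Lemma \ref{lm.v.c12}, with $\Lc V$ bounded locally — to get
$$
V(\varepsilon,X_\varepsilon,S) - V(0,x,S) = \int_0^\varepsilon \tfrac12\big(\Lc V(s,X_s-,S)+\Lc V(s,X_s+,S)\big)\,ds + M_\varepsilon + \tfrac12\int_0^\varepsilon \big(V_x(s,x+,S)-V_x(s,x-,S)\big)\,dL^x_s,
$$
where $M_\varepsilon$ is a martingale term (needs a localization/truncation argument using \eqref{eq.assume.wellpose1} and uniform integrability to ensure $\E^x[M_\varepsilon]=0$ — this is where Lemma \ref{lm.varepsilon.h}, alluded to in Remark \ref{rm.f.wellpose}, and Assumption \ref{assume.new} enter). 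Taking $\E^x$, dividing by $\varepsilon$, and letting $\varepsilon\searrow 0$: the drift integral contributes $-\tfrac12(\Lc V(0,x-,S)+\Lc V(0,x+,S))$ in the limit by continuity of $\Lc V(0,\cdot\pm,S)$ and dominated convergence, while the local time term contributes $-\tfrac12(V_x(0,x+,S)-V_x(0,x-,S))\cdot c$ for a strictly positive constant $c$ (the density of $\E^x[L^x_\varepsilon]$ at $\varepsilon=0$, positive by Assumption \ref{assume.x.elliptic}(ii) and \eqref{e2}; its exact value is $\sigma(x)$-dependent but irrelevant to the sign).

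For the ``only if'' direction: given \eqref{e7}, I want to conclude both \eqref{eq.weak.equicha1} and \eqref{eq.weak.equicha2} at every $x\in S$. The subtlety is that there are two contributions of different orders — the local time term is $O(\sqrt\varepsilon)$ while the drift term is $O(\varepsilon)$, because $\E^x[L^x_\varepsilon]\sim c\sqrt\varepsilon$ (this is precisely why \eqref{eq.vx.lineart} in Assumption \ref{assume.new} is phrased with $1/\sqrt t$). So I would first divide the numerator by $\sqrt\varepsilon$ rather than $\varepsilon$: the leading term forces $V_x(0,x-,S)-V_x(0,x+,S)\ge 0$, i.e. \eqref{eq.weak.equicha1} — and when this inequality is strict the liminf in \eqref{e7} is $+\infty$, while when it is an equality (the local time term vanishes to higher order, using \eqref{eq.vx.lineart} to control $V_x(s,x\pm,S)-V_x(0,x\pm,S)$ over $s\in[0,\varepsilon]$) the $\varepsilon$-scaled limit is governed by the drift and gives $\Lc V(0,x-,S)+\Lc V(0,x+,S)\le 0$. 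To upgrade this sum condition to the maximum condition \eqref{eq.weak.equicha2}, note that for $x\in S^\circ$ we have $V(t,y,S)=\delta(t)f(y)$ in a neighborhood, so $\Lc V(0,x\pm,S)=\delta'(0)f(x)+\mu f'(x\pm)+\tfrac12\sigma^2 f''(x\pm)$; but actually the cleaner route is: at a boundary point of type \eqref{boundary.a}, on the $S^c$-side $\Lc V(0,x\pm,S)=0$ by \eqref{eq.lm.lv0}, so one of the two one-sided values is automatically $0\le 0$ and controlling the other is what the first-order analysis gives; at a type-\eqref{boundary.b} isolated point both one-sided operators act on $\delta(t)f$; and at interior points of $S$ one needs \eqref{eq.weak.equicha2} to hold on all of $\X$, which for $x\in S^\circ$ again reduces to a statement about $\delta'(0)f+\mu f'+\tfrac12\sigma^2 f''\le 0$ that must be derived — here I'd use that one can take a sequence approaching $x$ along which the first-order condition at nearby points, combined with continuity, pins down the sign. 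The statement \eqref{eq.weak.equicha2} is asserted for all $x\in\X$, not just $x\in S$, so for $x\in S^c$ it is automatic from \eqref{eq.lm.lv0} ($\Lc V\equiv 0$), and only $x\in S$ needs work.

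For the ``if'' direction: condition \eqref{eq789} is exactly \eqref{e6}, so it remains to verify \eqref{e7} from \eqref{eq.weak.equicha1}--\eqref{eq.weak.equicha2}. Running the expansion above in reverse, $\E^x[V(0,x,S)-V(\varepsilon,X_\varepsilon,S)] = -\E^x\big[\int_0^\varepsilon \tfrac12(\Lc V(s,X_s-,S)+\Lc V(s,X_s+,S))\,ds\big] - \tfrac12\E^x\big[\int_0^\varepsilon (V_x(s,x+,S)-V_x(s,x-,S))\,dL^x_s\big]$; the drift part is $\ge -C\varepsilon$ with the right sign asymptotically nonnegative by \eqref{eq.weak.equicha2} plus continuity, and the local-time part is $\ge 0$ to leading order by \eqref{eq.weak.equicha1} plus \eqref{eq.vx.lineart} (the correction $V_x(s,x\pm,S)-V_x(0,x\pm,S)=o(\sqrt s)$ times $dL^x_s$ integrates to $o(\varepsilon)$ after taking expectations, using $\E^x[L^x_\varepsilon]=O(\sqrt\varepsilon)$), so dividing by $\varepsilon$ and taking liminf gives a nonnegative limit.

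I expect the main obstacle to be the rigorous treatment of the martingale term and the interchange of limit, expectation and the $\varepsilon$-scaling — specifically, showing $\E^x[M_\varepsilon]=0$ (not merely that $M$ is a local martingale) via a localization exhausting $\X$ by compacts together with the uniform integrability furnished by \eqref{eq.assume.wellpose1}, and then controlling the local-time contribution sharply enough to separate the $\sqrt\varepsilon$ and $\varepsilon$ scales — which is exactly the role of the delicate hypothesis \eqref{eq.vx.lineart}. A secondary difficulty is the bookkeeping across the three boundary types \eqref{boundary.a}, \eqref{boundary.b} and interior points when passing between the one-sided operator condition on $S$ and the global condition \eqref{eq.weak.equicha2} on $\X$.
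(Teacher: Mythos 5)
Your proposal is essentially correct and takes the same route as the paper: identify $\E^x[\delta(\rho^\varepsilon_S)f(X_{\rho^\varepsilon_S})]=\E^x[V(\varepsilon,X_\varepsilon,S)]$, localize to a ball $B(x,h)$ so that $V$ is piecewise $\Cc^{1,2}$ with only one possible jump in $V_x$ (at the center $x$), apply the local-time It\^o formula of Lemma~\ref{lm.peskir.ito}, take expectations, and then read off the sign from the separation of scales between the local-time term (order $\sqrt\varepsilon$, via Lemma~\ref{lm.local.time}) and the drift term (order $\varepsilon$). You correctly identify Lemma~\ref{lm.varepsilon.h} and \eqref{eq.assume.wellpose} as the tools making the localization error $o(\varepsilon)$, and \eqref{eq.vx.lineart} as the estimate that controls the jump $V_x(s,x+,S)-V_x(s,x-,S)$ uniformly in small $s$ so that the scales actually separate.

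The one place your route diverges from the paper's is in how you reach the maximum condition~\eqref{eq.weak.equicha2}. You propose an intermediate ``sum condition'' $\Lc V(0,x-,S)+\Lc V(0,x+,S)\le 0$ obtained by taking the $\varepsilon$-scaled limit of the drift integral when the local-time jump vanishes. This is a valid path but has two costs you should be aware of. First, to see that the drift integral converges to $\tfrac12(\Lc V(0,x-,S)+\Lc V(0,x+,S))$ when $x$ is a jump point of $\Lc V$, you need $\P^x(X_s>x)\to\tfrac12$ — this is exactly the content of the paper's Lemma~\ref{lm.1/2}, which is proved in Section~\ref{sec:strong.optimal} for the strong-equilibrium analysis but never used in Theorem~\ref{prop.weak.cha}. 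Second, and more importantly, the $\varepsilon$-scaled drift limit is only informative at points where $V_x(0,x-,S)=V_x(0,x+,S)$; when the jump is strictly positive the local-time term (of order $\sqrt\varepsilon$) dominates and the liminf in \eqref{e7} is $+\infty$, so no information about the generator comes out of the expansion at $x$ itself. The paper avoids both issues by a cleaner route that you also gesture at in your final paragraph: it derives $\Lc V(0,x,S)\le 0$ \emph{directly at smooth interior points} $x\in\Gc\cap S^\circ$, where $V$ is $\Cc^{1,2}$ across $x$, so the local-time jump is automatically zero and the drift limit is exactly $\Lc V(0,x,S)$ (no $\tfrac12$ needed). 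Points of $\partial S$ and $S^\circ\setminus\Gc$ are then handled by one-sided continuity of $y\mapsto\Lc V(0,y\pm,S)$ along sequences in $\Gc\cap S^\circ$ or $S^c$ (where $\Lc V\equiv 0$ by \eqref{eq.lm.lv0}). The admissibility hypothesis on $S$ is what guarantees such one-sided approach intervals exist. So your sum-then-upgrade detour is ultimately redundant: the case analysis that upgrades the sum to the max is the whole argument, and the sum never needs to be isolated as an intermediate claim. Everything else — the necessity of \eqref{eq.weak.equicha1} by the contradiction $\liminf=-\infty$ when the jump is positive, and the sufficiency via Fatou plus the $o(\sqrt t)$ control from \eqref{eq.vx.lineart} — matches the paper.
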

The proof of Theorem \ref{prop.weak.cha} will be presented in the next subsection.
A consequence of Theorem \ref{prop.inside.cha} is the following smooth-fit condition of $V$ at the boundary $\partial S$ when $f$ is smooth.
\begin{Corollary}[Smooth-fit condition for weak equilibria when $f$ is smooth]\label{cor.boundary.smoothfit}
Let Assumptions \ref{assume.x.elliptic}--\ref{assume.new} hold, and let $S$ be an admissible stopping policy. Suppose $S$ is a weak equilibrium. Then for any $x\in\partial S$, if $f'(x)$ exists, then
$V_x(0,x-,S)=V_x(0,x+,S).$
\end{Corollary}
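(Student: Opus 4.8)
The plan is to read the corollary directly off the characterization in Theorem~\ref{prop.weak.cha} together with the regularity of $V$ recorded in Lemma~\ref{lm.v.c12}. Since $S$ is a weak equilibrium it satisfies \eqref{eq789} and \eqref{eq.weak.equicha1}; the inequality \eqref{eq.weak.equicha1} already gives $V_x(0,x-,S)\geq V_x(0,x+,S)$ for every $x\in S$, so for a boundary point $x\in\partial S$ at which $f'(x)$ exists it remains only to prove the reverse inequality $V_x(0,x-,S)\leq V_x(0,x+,S)$. Because $S$ is admissible, such an $x$ falls into case~(\ref{boundary.a}) or case~(\ref{boundary.b}) of Definition~\ref{eq.def.admissible}, and I would treat these two cases separately.

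In case~(\ref{boundary.b}) the point $x$ is isolated, so $B(x,h)\setminus\{x\}\subset S^c$ for some $h>0$, and hence a full (relative) neighborhood of $x$ lies in $\overline{S^c}$. By Lemma~\ref{lm.v.c12}(a), $V(\cdot,\cdot,S)\in\Cc^{1,2}([0,\infty)\times\overline{S^c})$, so $y\mapsto V_x(0,y,S)$ exists and is continuous across $x$; in particular $V_x(0,x-,S)=V_x(0,x+,S)$ outright, and smoothness of $f$ is not even needed here.

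In case~(\ref{boundary.a}), after possibly swapping the roles of left and right I may assume $(x-h,x)\subset S^\circ$ and $(x,x+h)\subset S^c$ for some $h>0$ (the mirror sub-case being handled identically with the inequalities reversed). On the $S$-side, Lemma~\ref{lm.v.c12}(a) gives $V(0,y,S)=\delta(0)f(y)=f(y)$ for $y\in(x-h,x]$, so $V_x(0,x-,S)=f'(x-)=f'(x)$, using that $f'(x)$ exists. On the $S^c$-side, $V_x(0,x+,S)$ exists by Lemma~\ref{lm.v.c12}(a) (as $x\in\overline{S^c}$), and the function $g(y):=V(0,y,S)-f(y)$ satisfies $g\geq 0$ on $[x,x+h)$ by \eqref{eq789}, while $g(x)=V(0,x,S)-f(x)=0$ since $x\in S$. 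Being right-differentiable at $x$ (both $V(0,\cdot,S)$ and $f$ are) with a nonnegative difference quotient, $g$ has a nonnegative right derivative there, i.e. $V_x(0,x+,S)\geq f'(x+)=f'(x)$. Combining the two sides gives $V_x(0,x+,S)\geq f'(x)=V_x(0,x-,S)$, which together with \eqref{eq.weak.equicha1} forces $V_x(0,x-,S)=V_x(0,x+,S)$.

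I do not expect a genuine obstacle here; the corollary is a short consequence of Theorem~\ref{prop.weak.cha}. The only points that need a little care are: (i) verifying that the relevant one-sided derivative of $V(0,\cdot,S)$ on the $S^c$-side actually exists, for which I would invoke the $\Cc^{1,2}$-regularity up to $\overline{S^c}$ from Lemma~\ref{lm.v.c12}(a) (ultimately Assumption~\ref{assume.new}); (ii) the bookkeeping of the two sub-cases of case~(\ref{boundary.a}) and the trivial case~(\ref{boundary.b}); and (iii) using differentiability of $f$ at $x$ to replace $f'(x\pm)$ by the common value $f'(x)$, so that the two one-sided slopes of $V(0,\cdot,S)$ can be pinched together through $f'(x)$.
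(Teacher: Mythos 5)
Your treatment of case~(\ref{boundary.a}) is correct and is essentially the paper's argument (the paper works with the mirror sub-case $(x,x+h)\subset S^\circ$, $(x-h,x)\subset S^c$ and compares difference quotients on the $S^c$-side rather than the $S$-side, but the mechanism is identical: $V(0,\cdot,S)\geq f$ on $S^c$ and $V(0,\cdot,S)=f$ on $S$, then pinch through $f'(x)$).

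Your treatment of case~(\ref{boundary.b}) has a genuine gap. You read Lemma~\ref{lm.v.c12}(a)'s ``$V\in\Cc^{1,2}([0,\infty)\times\overline{S^c})$'' as saying that $V(0,\cdot,S)$ is $C^2$ across an isolated point $x$ of $S$, and concluded $V_x(0,x-,S)=V_x(0,x+,S)$ without using $f'(x)$ at all. That is not what the lemma provides and it cannot be: the regularity in Lemma~\ref{lm.v.c12}(a) comes from Assumption~\ref{assume.new}, which supplies $\Cc^{1,2}$ regularity only on intervals $[a,b]$ with $(a,b)\subset S^c$, and an isolated point $x\in S$ lies in no such open interval. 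One therefore gets $V\in\Cc^{1,2}([0,\infty)\times[x-h,x])$ and $V\in\Cc^{1,2}([0,\infty)\times[x,x+h])$ separately, so both one-sided derivatives exist, but nothing forces them to agree. Indeed the example in Section~\ref{sebsec:eg.counter} gives a concrete contradiction to your claim: $S=\{b\}$ there is an admissible strong (hence weak) equilibrium consisting of a single isolated point, and the explicit computation yields $V_x(0,b-,\{b\})=d\int_0^\infty e^{-s}\sqrt{2\beta s}\,ds>0>V_x(0,b+,\{b\})$ — a genuine kink, possible precisely because $f$ in that example is not differentiable at $b$. So your sentence ``smoothness of $f$ is not even needed here'' is false; the hypothesis that $f'(x)$ exists is used essentially in case~(\ref{boundary.b}). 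The correct (paper's) argument for case~(\ref{boundary.b}) is the same squeeze as in case~(\ref{boundary.a}), now run on both sides of $x$: $V(0,y,S)\geq f(y)$ for $y\in B(x,h)\setminus\{x\}\subset S^c$ while $V(0,x,S)=f(x)$, so the left difference quotient gives $V_x(0,x-,S)\leq f'(x-)$ and the right difference quotient gives $V_x(0,x+,S)\geq f'(x+)$, and the existence of $f'(x)$ lets you chain these into $V_x(0,x-,S)\leq f'(x)\leq V_x(0,x+,S)$, which combined with~\eqref{eq.weak.equicha1} closes the proof.
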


\begin{proof}
Take an arbitrary $x\in \partial S$. Take $x\in \partial S$. By Theorem \ref{prop.inside.cha}, it suffices to prove that $V_x(0,x-,S)\leq V_x(0,x+,S)$ for both boundary cases (\ref{boundary.a}) and (\ref{boundary.b}).

Recall $\Gc$ defined in \eqref{eq.assume.G}. For boundary case (\ref{boundary.a}), without loss of generality, we assume $(x,x+h)\subset (S^\circ\cap \Gc)$ and $(x-h,x)\subset S^c$ for some $h>0$. Since $V(0,x,S)\geq f(x)$ on $S^c$ by \eqref{e6} and $V(0,x,S)=f(x)$ on $S$ by Lemma \ref{lm.v.c12} (a), we have that for $\varepsilon>0$ small enough,
$$
\frac{V(0,x-\varepsilon,S)-V(0,x,S)}{\varepsilon}\geq \frac{f(x-\varepsilon)-f(x)}{\varepsilon}.
$$
By the differentiability of $V$ on $[0,\infty)\times \overline{S^c}$ (due to Lemma \ref{lm.v.c12}(a)) and existence of $f'(x)$, the above inequalities implies that
$$
V_x(0,x-,S)\leq f'(x-)=f'(x+)=V_x(0,x+,S),
$$
where the last equality follows from $V(0,x,S)=f(x)$ on $(x,x+h)\subset (S^\circ\cap \Gc)$.

For boundary case (\ref{boundary.b}), we can choose a constant $h>0$ such that $(B(x,h)\setminus\{x\})\subset (S^c\cap \Gc)$. Then $V(0,y,S)\geq f(y)$ for all $y\in B(x,h)\setminus\{x\}$, which implies that
$$
V_x(0,x-,S)\leq f'(x-)=f'(x+)\leq V_x(0,x+,S)
$$
by an argument similar to that for boundary case (\ref{boundary.a}).
\end{proof}

\begin{Remark}\label{rm.smooth.fit}
In \cite{MR4332966}, it is shown that with the underlying process being a geometric Brownian motion, the smooth-fit condition together with some inequalities provides a weak equilibrium; in addition, the real options example in \cite{MR4332966} indicates that when smooth-fit condition fails, there is no weak equilibrium. This, however, does not indicate whether any weak equilibrium must satisfy the smooth-fit condition. Here we are able to provide a much sharper result in a much more general setting: given $f$ is smooth, any weak equilibrium must satisfy the smooth-fit condition, and may be constructed by the smooth-fit condition together with some other related inequalities. Let us also mention that smooth-fit result is also established in a very recent paper \cite{bodnariu2022local} for {\it mixed } weak equilibrium under a general setting.
\end{Remark}

\begin{Remark}\label{rm.sf.general}
In our paper, the payoff function $f$ is only required to be piecewisely smooth. The inequality in \eqref{eq.weak.equicha1} and Corollary \ref{cor.boundary.smoothfit} show that the smooth-fit condition is a specially case of the ``local convexity" property for a weak equilibrium $S$: the left derivative w.r.t. $x$ of the value function $V(0,x,S)$ must be bigger than or equal to its right derivative for \textit{any} $x\in S$. In particular, if the payoff function is smooth at a point $x\in S$, such convexity property is reduced to the smooth-fit condition.\end{Remark}

\begin{Remark}\label{rm.exponential}
Suppose the discount function is exponential in the current one-dimensional diffusion context. Then \eqref{eq789} and \eqref{eq.weak.equicha2} together yield the variational inequalities. As is well known in classical optimal stopping theory, (under suitable assumptions) the optimal stopping value and strategy can be characterized by variational inequalities. Therefore, when the discount function is exponential, Theorem \ref{prop.weak.cha} indicates that any weak equilibrium is an optimal stopping region in the classical sense, so are strong and optimal mild equilibrium (as we will show later that an optimal mild equilibrium is also weak). On the other hand, a mild equilibrium is not necessarily a classical optimal stopping region, e.g., the whole state space $\X$ is a mild equilibrium but may not be an optimal stopping region in general.
\end{Remark}

\subsection{Proof of Theorem \ref{prop.weak.cha}}\label{subsec:3.1}

To characterize a weak equilibrium, one shall consider the two conditions \eqref{e6} and \eqref{e7} in Definition \ref{def.weak}. \eqref{e6} is the same as \eqref{eq789} and thus we will focus on condition \eqref{e7}. By $V$ defined in \eqref{eq.V}, \eqref{e7}  can be rewritten as 
\be\label{eq.explain.31.0}  
\limsup\limits_{\varepsilon\searrow 0}\dfrac{1}{\varepsilon} \Big(\E^x[\delta( \rho^\varepsilon_S)f(X_{\rho^\varepsilon_S})]-f(x)\Big)=\limsup\limits_{\varepsilon\searrow 0}\dfrac{1}{\varepsilon} \Big(\E^x[V(\varepsilon,X_\varepsilon,S)]-V(0,x,S)\Big)\leq 0, \quad x\in S.
\ee 
Since $X_\eps$ and thus $V(\varepsilon,X_\varepsilon,S)$ are not uniformly bounded, we will apply some localization argument and restrict $X$ within a bounded ball $B(x,h)$. Moreover, as $x\mapsto V(t,\cdot, S)$ is only piecewisely smooth, we will choose $h>0$ small enough, such that $V_x$ is only (possibly) discontinuous at the center of the ball $B(x,h)$, in order to apply Lemma \ref{lm.peskir.ito} to $V$ in \eqref{eq.explain.31.0}. By doing so, we will end up with
\begin{align}
\label{eq.explain.31.1} \E^x[V(\eps, X_\eps, S)-V(0,x,S)]\approx &\E^x[V(\eps\wedge \tau_{B(x,h)}, X_{\eps\wedge \tau_{B(x,h)}})-V(0,x,S)]\\
\notag=&\E^x\left[\int_0^{\varepsilon\wedge \tau_{B(x,h)}} \frac{1}{2}(\Lc V(s,X_{s}-,S)+\Lc V(s, X_s+,S))ds\right]\\
\label{eq.explain.31.2} 	&+\E^x\left[\frac{1}{2}\int_0^{\varepsilon\wedge \tau_{B(x,h)}}(V_x(s,x+,S)-V_x(s,x-,S))dL^x_s\right].
\end{align}
where the approximation in \eqref{eq.explain.31.1} will be made rigorous in Lemma \ref{lm.varepsilon.h}, which is built on Lemma \ref{lm.step.1}, and \eqref{eq.explain.31.2} is due to Lemma \ref{lm.peskir.ito}.
Then the condition \eqref{eq.explain.31.0} boils down to comparing the two integral terms on the right-hand-side (RHS) of \eqref{eq.explain.31.2}. This requires estimates for the expected local time $\E^x[L^x_{\eps\wedge \tau_{B(x,h)}}]$ (see Lemma \ref{lm.local.time}, which is built upon Lemmas \ref{lm.step.1} and \ref{lm.step.2}) and the growth of $V_x(t,x\pm,S)$ w.r.t. $t$ (see Lemma \ref{lm.v.lrderiv}). This is the overall idea on how we obtain Theorem \ref{prop.weak.cha}.
 
Throughout this section, we shall also take advantage of the following standard estimate for moments of diffusions (see e.g., \cite[Problem 3.15 on page 306]{MR1121940}): Given a process $Z_t$ satisfying $dZ_t= \beta(Z_t)dt+\delta(Z_t)dW_t$ with $\beta, \delta$ being Lipschitz and $Z_0=z\in \X$, for all $0\leq \eps\leq 1$ and $m\geq 1$ it holds that, 
\begin{align}
&\E^x\left[ \sup_{0\leq s\leq t} |Z_s|^{2m} \right]<\infty\quad \forall t\in(0,\infty)  \label{e10'},\\
&\E^x\left[|Z_\varepsilon-z|^{2m}\right]\leq K(1+|z|^{2m}) \varepsilon^m \label{e10},
\end{align}
where $K$ is a constant independent of $\eps$.

We first provide two Lemmas dealing with the probability of $X$ exiting a ball, and the first order moment related to $X$ over a small time horizon $\eps$. They will be used for proofs in both the current and later sections.

\begin{Lemma}\label{lm.step.1}
Let Assumption \ref{assume.x.elliptic} hold. For any fixed $a>0$ we have that
\be\label{eq.lm.step1}  
	\P^{x}(\tau_{B(x,h)}\leq \eps)=o(\eps^a),\quad \text{for $\eps> 0$ small enough.}
	\ee
\end{Lemma}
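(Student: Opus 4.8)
The plan is to show that $\P^x(\tau_{B(x,h)}\le\varepsilon)$ decays, as $\varepsilon\searrow 0$, faster than any power of $\varepsilon$ — in fact like $e^{-c/\varepsilon}$ — which immediately yields the claimed $o(\varepsilon^a)$ for every fixed $a>0$. Write $\tau:=\tau_{B(x,h)}$. If $B(x,h)=\X$ then $\tau=\infty$ and the statement is trivial, so assume otherwise. Since $X_\tau\in\X$ and, by continuity of $X$, $X_\tau\notin (x-h,x+h)$, we have $|X_\tau-x|\ge h$ on $\{\tau<\infty\}$, and hence (taking $s=\tau$) $\{\tau\le\varepsilon\}\subseteq\{\sup_{0\le s\le\varepsilon}|X_{s\wedge\tau}-x|\ge h\}$. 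The crucial localization is that up to time $\tau$ the process stays in the compact set $\overline{B(x,h)}$, on which the Lipschitz — hence continuous — coefficients $\mu,\sigma$ are bounded; set $A:=\sup_{\overline{B(x,h)}}|\mu|<\infty$ and $B:=\sup_{\overline{B(x,h)}}|\sigma|$, noting $0<B<\infty$ by Assumption~\ref{assume.x.elliptic}.

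Next I would split $X_{s\wedge\tau}-x=D_s+M_s$ with $D_s:=\int_0^{s\wedge\tau}\mu(X_u)\,du$ and $M_s:=\int_0^{s\wedge\tau}\sigma(X_u)\,dW_u$. For $s\le\varepsilon$ one has $|D_s|\le A\varepsilon$, while $M$ is a continuous local martingale with $\langle M\rangle_s\le B^2\varepsilon$. Hence, once $\varepsilon$ is small enough that $A\varepsilon<h/2$ (no restriction if $A=0$), the event $\{\tau\le\varepsilon\}$ forces $\sup_{0\le s\le\varepsilon}|M_s|\ge h/2$, so it remains to bound $\P^x(\sup_{0\le s\le\varepsilon}|M_s|\ge h/2)$. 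For this I would use the exponential supermartingale $Z^\lambda_s:=\exp(\lambda M_s-\tfrac{\lambda^2}{2}\langle M\rangle_s)$, $\lambda>0$: since $\langle M\rangle_s\le B^2\varepsilon$, the event $\{\sup_{s\le\varepsilon}M_s\ge h/2\}$ is contained in $\{\sup_{s\le\varepsilon}Z^\lambda_s\ge\exp(\lambda h/2-\tfrac{\lambda^2}{2}B^2\varepsilon)\}$, and Doob's maximal inequality for the nonnegative supermartingale $Z^\lambda$, together with the choice $\lambda=h/(2B^2\varepsilon)$, gives $\P^x(\sup_{s\le\varepsilon}M_s\ge h/2)\le e^{-h^2/(8B^2\varepsilon)}$; the same bound holds for $-M$ by symmetry. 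Therefore $\P^x(\tau_{B(x,h)}\le\varepsilon)\le 2e^{-h^2/(8B^2\varepsilon)}$ for all sufficiently small $\varepsilon>0$, and since $\varepsilon^{-a}e^{-c/\varepsilon}\to 0$ as $\varepsilon\searrow 0$ for every $a,c>0$, this completes the proof.

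There is essentially only one point to be careful about: the coefficients $\mu,\sigma$ are only assumed Lipschitz on $\X$, so they need not be globally bounded; this is handled, as above, by working with the process stopped at $\tau$ and invoking compactness of $\overline{B(x,h)}$. Beyond that the argument is a routine small-time martingale tail estimate, and I do not anticipate a genuine obstacle. If one prefers to avoid the exponential martingale, the sub-Gaussian bound for $\sup_{s\le\varepsilon}|M_s|$ can instead be obtained from the Dambis--Dubins--Schwarz time change — bounding $\sup_{s\le\varepsilon}|M_s|$ by $\sup_{0\le u\le B^2\varepsilon}|\beta_u|$ for a Brownian motion $\beta$, then applying the reflection principle and the Gaussian tail inequality — or, since only a fixed $a$ is needed, from the Burkholder--Davis--Gundy inequality with exponent $2m>2a$ combined with the moment estimate \eqref{e10}, which already gives a bound of order $\varepsilon^m=o(\varepsilon^a)$.
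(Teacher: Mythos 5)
Your proof is correct and gives a strictly stronger conclusion than the lemma asks for, but it takes a genuinely different route from the paper's. The paper first performs a ``change of space'': it maps $X$ to the local martingale $Y_t:=\phi(X_t)$ via the scale function $\phi$, then applies Doob's submartingale inequality to $|Y_t-Y_0|^{2\tilde a}$ with $\tilde a:=a+1$ and invokes the moment estimate \eqref{e10} (with $m=\tilde a$) to obtain $\P^x(\tau_{B(x,h)}\le\eps)=O(\eps^{a+1})=o(\eps^a)$. You instead keep $X$ as is, split $X_{\cdot\wedge\tau}-x$ into a drift part $D$ and a stochastic-integral part $M$, localize the coefficients on $\overline{B(x,h)}$, absorb the drift (for $\eps$ with $A\eps<h/2$), and then hit $M$ with the exponential supermartingale $Z^\lambda$ and Doob's maximal inequality for nonnegative supermartingales (recalling that a nonnegative local martingale is a supermartingale). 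Optimizing over $\lambda$ yields a sub-Gaussian bound $\P^x(\tau_{B(x,h)}\le\eps)\le 2e^{-h^2/(8B^2\eps)}$, which dominates every polynomial and in particular delivers the $o(\eps^a)$ claim at once, uniformly in $a$. What the paper's method buys is that it only needs a \emph{polynomial} moment of the stopped process, which fits the $\eps^m$ estimate \eqref{e10} already set up for later use, at the cost of producing a bound whose exponent depends on $a$; what your method buys is a sharper, $a$-free tail at the cost of introducing the exponential martingale. Your concluding remark that BDG with exponent $2m>2a$ plus \eqref{e10} would also work is accurate, and that variant is essentially the paper's approach modulo the scale-function transformation (which the paper uses to get a driftless process before applying \eqref{e10}, something your localization of $|D_s|\le A\eps$ handles directly). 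The only points that require care in your argument — that $X$ stopped at $\tau$ remains in the compact set $\overline{B(x,h)}$ so the coefficients are bounded, that $B>0$ by Assumption \ref{assume.x.elliptic}(ii), and that $|X_\tau-x|=h$ on $\{\tau<\infty\}$ — are all handled correctly, so I see no gap.
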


\begin{proof}
	Fix $a>0$. We invoke the ``change of space" method in \cite[Section 5.2]{MR2256030}. Consider the process
	\be\label{eq.lm.change}  
	Y_t:= \phi (X_t), Y_0:=\phi(x) \; \text{with} \;\phi(y):= \int_0^y\exp\left( -\int_0^l \frac{2\mu(z)}{\sigma^2(z)}dz \right)dl.
	\ee 
	Thanks to Assumption \ref{assume.x.elliptic}, $\phi$ is well-defined, strictly increasing, and has first and second derivatives. A direct calculation shows that $dY_t=\sigma(X_t)\phi'(X_t)dW_t$, and the exit time to $\overline{B(x,h)}$ of $X_t$ is equivalent to the exit time of $Y_t$ to the interval $[\phi(x-h), \phi(x+h)]$. Set $\tilde{h}:=\left(\phi(x+h)-\phi(x) \right)\wedge \left(\phi(x)-\phi(x-h) \right)>0$ and $\tilde{a}:= a+1$. Let $0<\eps\leq 1$. We have that
	\be\label{eq.lm.1} 
	\P^{x}(\tau_{B(x,h)}\leq \eps)\leq \P^{Y_0} \left(\sup_{0\leq t\leq \eps} |Y_t-Y_0| \geq \tilde{h} \right)=\P^{Y_0} \left( \sup_{0\leq t\leq \eps} |Y_t-Y_0|^{2\tilde{a}} \geq \tilde{h}^{2\tilde{a}}\right).
	\ee 
Notice that $Y$ is a martigale (within the interval $\overline{B(\phi(x),\tilde{h})}$), we can then apply the Doob's submartingale inequality to the RHS of \eqref{eq.lm.1} to conclude that
	\be\label{eq.lm.probepsilonh}
	\begin{aligned}
		&\P^{Y_0} \left( \sup_{0\leq t\leq \eps} |Y_t-Y_0|^{2\tilde{a}} \geq \tilde{h}^{2\tilde{a}}\right)\leq  \frac{\E^{Y_0}[|Y_\eps-Y_0|^{2\tilde{a}}]}{\tilde{h}^{2\tilde{a}}}\leq \frac{\tilde{K}(1+\phi^{2\tilde{a}}(x))\eps^{\tilde{a}}}{{\tilde{h}}^{2\tilde{a}}}, 
	\end{aligned}
	\ee
	where the last inequality follows from \eqref{e10}, and $\tilde{K}$ is a positive constant independent of $\eps$. Then \eqref{eq.lm.step1} follows from \eqref{eq.lm.1}, \eqref{eq.lm.probepsilonh} and the fact that $\tilde{a}>a$.
\end{proof}

\begin{Lemma}\label{lm.step.2}
Let Assumption \ref{assume.x.elliptic}(i) hold. For $\eps>0$ small enough we have that
\begin{equation}\label{e11}
	\E^{x}[|\bar{X}_\varepsilon|]=O(\varepsilon),\quad \text{with }	\widetilde{X}_t:=x+\mu(x)t+\sigma(x) W_t \text{ and } \bar{X}_t:= X_t-\widetilde{X}_t.
\end{equation}
\end{Lemma}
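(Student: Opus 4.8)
The plan is to control the second moment $\E^x[|\bar X_\eps|^2]$ and then pass to the first moment by Jensen's inequality. Subtracting the two dynamics gives $\bar X_\eps=\int_0^\eps(\mu(X_s)-\mu(x))\,ds+\int_0^\eps(\sigma(X_s)-\sigma(x))\,dW_s$, so by $(a+b)^2\le 2a^2+2b^2$ it suffices to bound the drift and the martingale contributions separately. For the drift part, Cauchy--Schwarz in the time variable together with the Lipschitz bound $|\mu(y)-\mu(x)|\le L|y-x|$ from Assumption \ref{assume.x.elliptic}(i) gives $\big(\int_0^\eps(\mu(X_s)-\mu(x))\,ds\big)^2\le \eps L^2\int_0^\eps|X_s-x|^2\,ds$. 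For the martingale part, the integrand $(\sigma(X_s)-\sigma(x))^2$ has finite expected time-integral on $[0,\eps]$ (bounded by $L^2\eps\,\E^x[\sup_{s\le\eps}|X_s-x|^2]<\infty$ thanks to \eqref{e10'}), so the stochastic integral is a genuine $L^2$-martingale and It\^o's isometry applies: $\E^x\big[\big(\int_0^\eps(\sigma(X_s)-\sigma(x))\,dW_s\big)^2\big]=\E^x\big[\int_0^\eps(\sigma(X_s)-\sigma(x))^2\,ds\big]\le L^2\,\E^x\big[\int_0^\eps|X_s-x|^2\,ds\big]$.

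It then remains to estimate $\E^x\big[\int_0^\eps|X_s-x|^2\,ds\big]$. Here I would invoke the standard a priori bound \eqref{e10} (applied with $Z=X$, $z=x$, $m=1$), which gives $\E^x[|X_s-x|^2]\le K(1+|x|^2)s$ for all $s\le\eps\le 1$; integrating over $[0,\eps]$ yields $\E^x\big[\int_0^\eps|X_s-x|^2\,ds\big]\le \tfrac12 K(1+|x|^2)\eps^2$. Substituting this into the two estimates above, we obtain $\E^x[|\bar X_\eps|^2]\le C_x(\eps^3+\eps^2)\le 2C_x\eps^2$ for $\eps\le 1$, where $C_x$ depends only on $L$, $K$ and $x$. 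Jensen's inequality then gives $\E^x[|\bar X_\eps|]\le(\E^x[|\bar X_\eps|^2])^{1/2}\le\sqrt{2C_x}\,\eps$, which is exactly \eqref{e11}.

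This is a routine SDE moment estimate, and I do not anticipate any genuine difficulty; the only thing worth spelling out is where the improvement over a naive $O(\sqrt\eps)$ bound comes from. Both the drift and the martingale contributions to $\E^x[|\bar X_\eps|^2]$ are time-integrals over $[0,\eps]$ of quantities that are already $O(\eps)$ in mean by \eqref{e10}, so those contributions are $O(\eps^2)$ rather than $O(\eps)$; taking square roots then produces the order $\eps$ (not $\sqrt\eps$) asserted for $\E^x[|\bar X_\eps|]$.
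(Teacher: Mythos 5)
Your proof is correct and follows essentially the same route as the paper: decompose $\bar X_\eps$ into the drift and martingale remainders, invoke the moment bound \eqref{e10} with $m=1$ to get $\E^x[|X_s-x|^2]\le Cs$, and control each piece via Cauchy--Schwarz/It\^o isometry. The only cosmetic difference is that you estimate $\E^x[|\bar X_\eps|^2]=O(\eps^2)$ and then apply Jensen once, whereas the paper works directly with the two $L^1$ norms (using $|a|\le\tfrac12(1+a^2)$ for the drift term and Cauchy--Schwarz for the martingale term); both yield the claimed $O(\eps)$ bound.
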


\begin{proof}
Throughout the proof, $C$ will serve as a generic constant may change from line to line but is independent of $\eps$. Let $0<\eps\leq 1$. First, we have
	\be\label{eq.xbar} 
	\begin{aligned}
		\E^{x}|\bar{X}_\eps|\leq \E^{x}\left|\int_0^\varepsilon(\mu(X_t)-\mu(x))dt\right|+\E^{x}\left|\int_0^\varepsilon (\sigma(X_t)-\sigma(x))dW_t\right|.
	\end{aligned}
	\ee 
	By applying \eqref{e10} on $X_t$ with $m=1$, we have
	$
	\E^{x}[|X_\varepsilon-x|^{2}]\leq C \varepsilon.
	$
	This together with the Lipschitz continuity of $\mu$ implies
	\be\label{eq.lm.step3} 
	\begin{aligned}
		\E^{x}\left|\int_0^\varepsilon(\mu(X_t)-\mu(x))dt\right|&\leq \E^{x}\left[\int_0^\varepsilon \frac{1}{2}(1+|\mu(X_t)-\mu(x)|^2)dt\right]\\
		&\leq \frac{1}{2}\varepsilon +\frac{1}{2}  \int_0^\varepsilon C\E^{x}[|X_t-x|^2]dt=O(\eps).
	\end{aligned} 
	\ee 
	Similarly, we can estimate the second term in \eqref{eq.xbar} as follows
	\be\label{eq.lm.step3'} 
	\begin{aligned}
		\E^{x}\left|\int_0^\varepsilon (\sigma(X_t)-\sigma(x))dW_t\right|&\leq \left(\E^{x} \left[\int_0^\varepsilon (\sigma(X_t)-\sigma(x))^2dt\right]\right)^{1/2}\\
		&\leq \left(\int_0^\varepsilon C\E^{x}[|X_t-x|^2]dt\right)^{1/2}=O(\eps)
	\end{aligned}
	\ee 
	Then by plugging \eqref{eq.lm.step3} and \eqref{eq.lm.step3'} into \eqref{eq.xbar}, we have $\E^{x}[|\bar{X}_\varepsilon|]=O(\varepsilon)$.
\end{proof}

The next lemma concerns the approximation in \eqref{eq.explain.31.1}.
\begin{Lemma}\label{lm.varepsilon.h}
	Let Assumptions \ref{assume.x.elliptic} and \ref{assume.f.positive}(i) hold. Let $S\in\Bc, x\in\X$ and $h>0$. Then for $\eps>0$ small enough,
	\be\label{eq.lm.vepsilonh} 
	\E^x[V(\varepsilon, X_\varepsilon,S)]=\E^x[V(\varepsilon\wedge \tau_{B(x,h)},X_{ \varepsilon\wedge \tau_{B(x,h)}},S)]+o(\varepsilon).
	\ee
\end{Lemma}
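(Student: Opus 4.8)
The plan is to split $\E^x[V(\varepsilon,X_\varepsilon,S)]$ according to whether $X$ has exited the ball $B(x,h)$ by time $\varepsilon$, i.e. to write
\[
\E^x[V(\varepsilon,X_\varepsilon,S)] = \E^x\big[V(\varepsilon,X_\varepsilon,S)\,\ind{\tau_{B(x,h)}>\varepsilon}\big] + \E^x\big[V(\varepsilon,X_\varepsilon,S)\,\ind{\tau_{B(x,h)}\leq\varepsilon}\big],
\]
and similarly for $\E^x[V(\varepsilon\wedge\tau_{B(x,h)},X_{\varepsilon\wedge\tau_{B(x,h)}},S)]$. On the event $\{\tau_{B(x,h)}>\varepsilon\}$ the two integrands coincide exactly since $\varepsilon\wedge\tau_{B(x,h)}=\varepsilon$ and $X_{\varepsilon\wedge\tau_{B(x,h)}}=X_\varepsilon$. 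Hence the difference in \eqref{eq.lm.vepsilonh} equals
\[
\E^x\big[(V(\varepsilon,X_\varepsilon,S)-V(\tau_{B(x,h)},X_{\tau_{B(x,h)}},S))\,\ind{\tau_{B(x,h)}\leq\varepsilon}\big],
\]
and it remains to show this is $o(\varepsilon)$.

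The key tool is the Cauchy--Schwarz (or Hölder) inequality combined with the tail estimate for the exit time from Lemma \ref{lm.step.1}. Using the almost-sure bound $|V(t,y,S)|\le \sup_{s\ge0}\delta(s)f(X_s)$ (valid because $V(t,y,S)$ is a $\delta$-discounted expectation of a value dominated by $\sup_{s\ge0}\delta(s)f(X_s)$ along the path, by the Markov property; more carefully, $V(t,X_t,S)$ is $\E^x$-dominated by $\Exp{\sup_{s\ge0}\delta(s)f(X_s)\,|\,\Fc_t}$, a uniformly integrable martingale), I would bound the above expectation by
\[
2\,\E^x\Big[\sup_{s\ge0}\big(\delta(s)f(X_s)\big)\,\ind{\tau_{B(x,h)}\leq\varepsilon}\Big]
\le 2\,\Big(\E^x\Big[\sup_{s\ge0}\big(\delta(s)f(X_s)\big)^{1+\zeta}\Big]\Big)^{\frac{1}{1+\zeta}}\,\P^x(\tau_{B(x,h)}\leq\varepsilon)^{\frac{\zeta}{1+\zeta}},
\]
where the first factor is finite by \eqref{eq.assume.wellpose} in Assumption \ref{assume.f.positive}(i), and the second factor, by Lemma \ref{lm.step.1} applied with any $a>0$ — in particular with $a = \frac{1+\zeta}{\zeta}$, so that $a\cdot\frac{\zeta}{1+\zeta}=1$ — is $o(\varepsilon^{a})^{\frac{\zeta}{1+\zeta}} = o(\varepsilon)$. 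Multiplying the two bounds gives $o(\varepsilon)$, which is exactly \eqref{eq.lm.vepsilonh}. One small technical point is that $V(t,y,S)$ may be negative a priori only if $f$ were; since $f\ge0$ and $\delta\ge0$ we in fact have $0\le V(t,y,S)$, so the modulus bound above is clean.

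The main obstacle I anticipate is making rigorous the almost-sure/integrable domination of $V(\tau_{B(x,h)},X_{\tau_{B(x,h)}},S)$ — and more generally of $V(t,X_t,S)$ — by an $L^{1+\zeta}$ random variable that does not depend on $\varepsilon$, so that Hölder's inequality can be applied uniformly in $\varepsilon$. The natural way to do this is via the strong Markov property: conditionally on $\Fc_t$, $V(t,X_t,S) = \E^x[\delta(t+\rho_S)f(X_{\rho_S})\,|\,\Fc_t]$ on $\{t<\rho_S\}$ (and equals $\delta(t)f(X_t)$ on $\{t\ge\rho_S\}$), and in either case this is $\le \E^x[\sup_{s\ge0}\delta(s)f(X_s)\,|\,\Fc_t]$ using $\delta(t+\rho_S)\le\delta(\rho_S)$; conditional Jensen then transfers the $(1+\zeta)$-moment bound. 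Once this domination is in place, the rest is the routine Hölder-plus-Lemma-\ref{lm.step.1} estimate sketched above, and the choice of the auxiliary exponent $a$ in Lemma \ref{lm.step.1} is what converts the polynomial-in-$\varepsilon$ exit probability into the required $o(\varepsilon)$.
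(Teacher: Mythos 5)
Your proposal is correct and follows essentially the same route as the paper: both split on the event $\{\tau_{B(x,h)}\le\varepsilon\}$, dominate $V(t,X_t,S)$ by $\E^x[\sup_{s\ge0}\delta(s)f(X_s)\mid\Fc_t]$ via the (strong) Markov property, apply H\"older with exponents $1+\zeta$ and $\frac{1+\zeta}{\zeta}$ together with conditional Jensen to invoke \eqref{eq.assume.wellpose}, and then use Lemma~\ref{lm.step.1} with $a=\frac{1+\zeta}{\zeta}$ to see that $\P^x(\tau_{B(x,h)}\le\varepsilon)^{\zeta/(1+\zeta)}=o(\varepsilon)$. The only cosmetic difference is that you bound the single difference term, whereas the paper estimates the two contributions $\E^x[V(\varepsilon,X_\varepsilon,S)\ind{\tau_{B(x,h)}\le\varepsilon}]$ and $\E^x[V(\varepsilon\wedge\tau_{B(x,h)},X_{\varepsilon\wedge\tau_{B(x,h)}},S)\ind{\tau_{B(x,h)}\le\varepsilon}]$ separately by the same argument.
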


\begin{proof}
	Let $h>0$ and $x\in \X$. Recall the constant $\zeta$ in \eqref{eq.assume.wellpose}.
	We have that
	\be\label{eq.lm.V} 
	\begin{aligned}
		0\leq &\E^x[V(\eps, X_\eps, S)\cdot 1_{\{  \varepsilon>\tau_{B(x,h)} \}}]\\
		\leq & \left( \E^x\left[V^{1+\zeta}(\eps, X_\eps, S)\right]\right)^{\frac{1}{1+\zeta}}\cdot \left(\E^x\left[1^{\frac{1+\zeta}{\zeta}}_{\{  \varepsilon>\tau_{B(x,h)} \}}\right]\right)^{\frac{\zeta}{1+\zeta}}\\
		\leq &   \left(\E^x\left[\sup_{t\geq 0}(\delta(t)f(X_t))^{1+\zeta} \right]\right)^{\frac{1}{1+\zeta}} \cdot \left(\E^x\left[1_{\{  \varepsilon>\tau_{B(x,h)} \}}\right] \right)^{\frac{\zeta}{1+\zeta}}\\
		\leq &O(1)\cdot 	\left(\P^{x}\left(\tau_{B(x,h)}\leq \eps \right)\right)^{\frac{\zeta}{1+\zeta}},
	\end{aligned} 
	\ee
	where the first inequality follows from $f\geq 0$, the second inequality follows from H\"older's inequality, the third inequality follows from Jensen's inequality, and the last inequality follows from \eqref{eq.assume.wellpose}. Applying Lemma \ref{lm.step.1} with $a=\frac{1+\zeta}{\zeta}$ to \eqref{eq.lm.V}, we have  
	\be\label{eq.lm.epsilon}
	\E^x \left[V(\eps, X_\eps, S)\cdot 1_{\{  \varepsilon>\tau_{B(x,h)} \}} \right]=o(\varepsilon).
	\ee 
	Similarly, we can show that
	$$\E^x \left[V(\varepsilon\wedge \tau_{B(x,h)},X_{ \varepsilon\wedge\tau_{B(x,h)}},S)\cdot 1_{\{  \varepsilon> \tau_{B(x,h)} \}}\right]=o(\varepsilon).$$
	This together with \eqref{eq.lm.epsilon} implies \eqref{eq.lm.vepsilonh}.
\end{proof}

Recall that $L_t^x$ is the local time of $X$ at position $x$ up to time $t$. We have the following result.

\begin{Lemma}\label{lm.local.time}
	Let Assumption \ref{assume.x.elliptic} hold. Then for any $x\in\X$ and $h>0$,
	\begin{equation}\label{eq.local.hepsilon} 
	\lim_{\eps\searrow 0}\frac{\E^x[L^x_{\varepsilon\wedge\tau_{B(x,h)}}]}{\sqrt{\eps}}=\sqrt{\frac{2}{\pi}}\cdot |\sigma(x)|.
	\end{equation}
\end{Lemma}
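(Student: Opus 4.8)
The plan is to reduce the expected local time to the first absolute moment of the increment $X_\eps-x$, and then replace that increment by the Gaussian proxy $\sigma(x)W_\eps$, whose local-time asymptotics at $0$ is classical; the only inputs needed are Lemmas~\ref{lm.step.1}--\ref{lm.step.2} and the moment bounds~\eqref{e10'}--\eqref{e10}. Write $\tau:=\tau_{B(x,h)}$. Applying Lemma~\ref{lm.peskir.ito} to the time-independent function $g(y)=|y-x|$ with $x_0=x$ on $B(x,h)$ (i.e.\ the Tanaka--Meyer formula) yields, for $0\le t\le\tau$,
\[
|X_t-x|=\int_0^t\operatorname{sgn}(X_s-x)\mu(X_s)\,ds+\int_0^t\operatorname{sgn}(X_s-x)\sigma(X_s)1_{\{X_s\ne x\}}\,dW_s+L^x_t .
\]
Evaluating at $t=\eps\wedge\tau$ and taking $\E^x$: since $X_s\in\overline{B(x,h)}$ on $[0,\eps\wedge\tau]$, the $dW$-integrand is bounded by $\sup_{\overline{B(x,h)}}|\sigma|<\infty$, so that stochastic integral stopped at the bounded time $\eps\wedge\tau$ is a true martingale with zero mean; and the $ds$-integral is $O(\eps)$ because $|\mu|$ is bounded on $\overline{B(x,h)}$. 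Hence $\E^x[L^x_{\eps\wedge\tau}]=\E^x|X_{\eps\wedge\tau}-x|+O(\eps)$.

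Next I would remove the truncation. By Cauchy--Schwarz,
\[
\bigl|\,\E^x|X_\eps-x|-\E^x|X_{\eps\wedge\tau}-x|\,\bigr|\le\E^x\bigl[|X_\eps-X_{\eps\wedge\tau}|\,1_{\{\tau<\eps\}}\bigr]\le\bigl(\E^x|X_\eps-X_{\eps\wedge\tau}|^2\bigr)^{1/2}\bigl(\P^x(\tau<\eps)\bigr)^{1/2};
\]
for $\eps\le1$ the first factor is $O(1)$ by~\eqref{e10'}, while Lemma~\ref{lm.step.1} applied with $a=2$ gives $\P^x(\tau<\eps)=o(\eps^2)$, so the left-hand side is $o(\eps)=o(\sqrt\eps)$. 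Finally, writing $X_\eps-x=\mu(x)\eps+\sigma(x)W_\eps+\bar X_\eps$ with $\bar X$ as in Lemma~\ref{lm.step.2},
\[
\bigl|\,\E^x|X_\eps-x|-|\sigma(x)|\,\E|W_\eps|\,\bigr|\le|\mu(x)|\eps+\E^x|\bar X_\eps|=O(\eps)
\]
by Lemma~\ref{lm.step.2}. Since $W_\eps\sim N(0,\eps)$ gives $\E|W_\eps|=\sqrt{2\eps/\pi}$, combining the three displays yields $\E^x[L^x_{\eps\wedge\tau}]=|\sigma(x)|\sqrt{2\eps/\pi}+o(\sqrt\eps)$; dividing by $\sqrt\eps$ and letting $\eps\searrow0$ gives~\eqref{eq.local.hepsilon}.

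I do not anticipate a genuine obstacle. The conceptual content is simply that near time $0$ the diffusion behaves like $x+\mu(x)t+\sigma(x)W_t$, whose local time at $x$ has the asserted rate with constant $\sqrt{2/\pi}\,|\sigma(x)|$. The two minor points needing care are (i) that the $dW$-term in the Tanaka decomposition is a genuine martingale up to $\eps\wedge\tau$ — which is precisely why one localizes to $B(x,h)$, as this bounds the integrand — and (ii) that the truncation error in the second step is $o(\sqrt\eps)$ rather than merely $O(\sqrt\eps)$, which forces the exponent in Lemma~\ref{lm.step.1} to be taken strictly larger than $1$.
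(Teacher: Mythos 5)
Your proof is correct and follows essentially the same route as the paper's: apply the Tanaka--Meyer formula (Lemma~\ref{lm.peskir.ito}) to $|y-x|$ on $[0,\eps\wedge\tau_{B(x,h)}]$, control the drift term by $O(\eps)$, remove the truncation via Lemma~\ref{lm.step.1} with moment bounds, and compare with the Gaussian proxy via Lemma~\ref{lm.step.2}. The paper de-truncates before invoking Tanaka whereas you do it after, but the ingredients and estimates are identical.
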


\begin{proof}
Let $h>0$ and $x\in \X$. Thanks to Assumption \ref{assume.x.elliptic}(i) and \eqref{e10'} (with $m=\frac{p+2}{2}>1$), it holds for any $p,t>0$ that
$$\E^x\left[\sup_{0\leq s\leq t}|X_s|^p\right]\leq 1+\E^x\left[\sup_{0\leq s\leq t}|X_s|^{p+2}\right]<\infty.$$ 
This enables us to apply an argument similar to the proof of Lemma \ref{lm.varepsilon.h} and get that
\be\label{eq.lm.xhepsilon}  
\E^x\left[|X_\varepsilon-x|\right]+o(\eps)=\E^x\left[|X_{\varepsilon\wedge \tau_{B(x,h)}}-x|\right].
\ee 
Applying Lemma \ref{lm.peskir.ito} on $\left[0, \varepsilon\wedge \tau_{B(x,h)}\right]$ with $g(t,y):=|y-x|$ and then taking expectation, and using \eqref{eq.lm.xhepsilon}, we have that
\begin{equation}\label{e31}
	\E^x[|X_\varepsilon-x|]+o(\eps)=\E^x\left[\int_0^{\varepsilon\wedge \tau_{B(x,h)}}\text{sgn}(X_s-x)\mu(X_s)ds\right]+\E^x\left[L^x_{\varepsilon\wedge \tau_{B(x,h)}}\right].
\end{equation}
By Assumption \ref{assume.x.elliptic}(i), the first term on the RHS of \eqref{e31} can be estimated as follows,
\begin{equation}\label{e32}
\left|\E^x\left[\int_0^{\varepsilon\wedge \tau_{B(x,h)}}\text{sgn}(X_s-x)\mu(X_s)ds\right]\right|\leq\sup_{y\in \overline{B(x,h)}}|\mu(y)|\eps=O(\eps).
\end{equation}
	As for the left-hand-side (LHS) of \eqref{e31}, by Lemma \ref{lm.step.2} we have that
	\begin{align}
	\notag \E^x[|X_\eps-x|]&=\E^x[|\widetilde{X}_\eps-x|]+O(\eps)=\E^x[|\mu(x)\eps+\sigma(x)W_\eps|]+O(\eps)\\
	\label{e33}&=|\sigma(x)|\E[|W_\eps|]+O(\eps)=|\sigma(x)|\E[|W_1|]\sqrt{\eps}+O(\eps)=|\sigma(x)|\sqrt{\frac{2}{\pi}}\sqrt{\eps}+O(\eps).
	\end{align} 
	Then \eqref{eq.local.hepsilon} follows from plugging \eqref{e32} and \eqref{e33} into \eqref{e31}.
	\end{proof}

\begin{Lemma}\label{lm.v.lrderiv}
	Let Assumptions \ref{assume.x.elliptic}, \ref{assume.delta.deriv}, \ref{assume.new} hold. Let $S\in\Bc$ and $x\in \partial S$, and suppose  $(x-h,x)\subset S^c$ (resp. $(x,x+h)\subset S^c$) for some $h>0$. Then
	\be 
	V_x(t,x-,S)\leq \delta(t) V_x(0,x-,S)\quad (\text{resp.}\; V_x(t,x+,S)\geq \delta(t) V_x(0,x+,S)).
	\ee
\end{Lemma}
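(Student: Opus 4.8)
The plan is to read the inequality off two elementary facts. First, log sub-additivity of $\delta$ forces the pointwise bound $V(t,y,S)\ge\delta(t)V(0,y,S)$ for \emph{every} $y\in\X$. Second, at a boundary point $x\in\partial S$ this bound is actually an \emph{equality}, because the diffusion returns to $x$ instantaneously and hence $\rho_S=0$ $\P^x$-a.s. Comparing the equality at $x$ with the inequality at nearby points $y$ and dividing difference quotients then produces exactly the asserted one-sided derivative inequality.

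For the first fact: for any $y\in\X$, Assumption \ref{assume.delta.deriv} gives $\delta(t+\rho_S)\ge\delta(t)\delta(\rho_S)$, so since $f\ge0$,
$$V(t,y,S)=\E^y\big[\delta(t+\rho_S)f(X_{\rho_S})\big] \ge \delta(t)\,\E^y\big[\delta(\rho_S)f(X_{\rho_S})\big]=\delta(t)\,V(0,y,S),$$
with both sides vanishing on $\{\rho_S=\infty\}$ so that the bound holds there trivially. For the second fact: if $x\in\partial S$ then $x\in\overline S$, hence $\{x\}\subset\overline S$ and $\rho_S=\rho_{\overline S}\le\rho_{\{x\}}=0$ $\P^x$-a.s. by \eqref{e2}; therefore $V(t,x,S)=\delta(t)f(x)=\delta(t)V(0,x,S)$, i.e. the above bound is an equality at $y=x$.

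Now fix $t\ge0$ and assume $(x-h,x)\subset S^c$. For $y\in(x-h,x)$, so that $x-y>0$, subtract the equality at $x$ from the inequality at $y$ and divide by $x-y$:
$$\frac{V(t,x,S)-V(t,y,S)}{x-y}=\frac{\delta(t)V(0,x,S)-V(t,y,S)}{x-y} \le \delta(t)\,\frac{V(0,x,S)-V(0,y,S)}{x-y}.$$
Letting $y\uparrow x$, the right-hand side converges to $\delta(t)V_x(0,x-,S)$ and the left-hand side converges to $V_x(t,x-,S)$ — the one-sided $x$-derivatives existing by the local $\Cc^{1,2}$-regularity of $V(\cdot,\cdot,S)$ to the left of $x$ supplied by Lemma \ref{lm.v.c12}(a) and Assumption \ref{assume.new} — which yields $V_x(t,x-,S)\le\delta(t)V_x(0,x-,S)$. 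The case $(x,x+h)\subset S^c$ is handled identically but with $y\downarrow x$: now $x-y<0$, the inequality in the displayed line reverses, and in the limit one obtains $V_x(t,x+,S)\ge\delta(t)V_x(0,x+,S)$.

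The substantive inputs are the boundary identity $V(t,x,S)=\delta(t)V(0,x,S)$ on $\partial S$ (itself a consequence of \eqref{e2}, i.e. the diffusion immediately revisiting $x$) and log sub-additivity; everything after that is two lines of algebra with difference quotients. The only point that needs a little care is justifying the existence of the one-sided derivatives $V_x(t,x\pm,S)$ and the passage to the limit in the quotients; if one wishes to avoid that entirely, the displayed inequality already delivers $\limsup_{y\uparrow x}\big(V(t,x,S)-V(t,y,S)\big)/(x-y)\le\delta(t)V_x(0,x-,S)$, which is all that the later arguments actually invoke.
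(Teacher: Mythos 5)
Your proof is correct and follows essentially the same route as the paper's: both derive $V(t,y,S)\ge\delta(t)V(0,y,S)$ from log sub-additivity and nonnegativity of $f$, use $\rho_S=0$ $\P^x$-a.s.\ at $x\in\partial S$ (via \eqref{e2}) to upgrade this to an equality at $x$, and then pass to one-sided difference quotients. The only cosmetic difference is that you spell out $\rho_S=\rho_{\overline S}\le\rho_{\{x\}}=0$ explicitly rather than simply citing Lemma \ref{lm.v.c12}(a), and you record the sign flip in the quotient for the right-derivative case, which the paper leaves as ``a similar argument.''
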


\begin{proof}
Notice that Assumption \ref{assume.new} gives the existence of $V_x(t,y-,S)$ for $y\in(x-h,x]$ (resp. $V_x(t,y+,S)$ for $y\in[x,x+h)$) when $(x-h,x)\subset S^c$ (resp. when $(x,x+h)\subset S^c$). For any $y\in \X, t\geq 0$, by the non-negativity of $f$ and \eqref{eq.assume.DI}, 
	\begin{align*} 
		V(t,y,S)=\E^y[\delta(t+\rho_S)f(X_{\rho_S})]\geq \delta(t)\E^y[\delta(\rho_S)f(X_{\rho_S})]=\delta(t)V(0,y,S).
	\end{align*}
	Suppose $(x-h,x)\subset S^c$. Then by the fact that $V(t,x,S)=\delta(t)f(x)$ (due to Lemma \ref{lm.v.c12} (a)) and the above inequality, we have that
	\begin{align*}
		V_x(t,x-,S)=&\lim_{\varepsilon\searrow 0} \frac{1}{\varepsilon}\left( V(t,x,S)-V(t,x-\varepsilon,S) \right)\\
		\leq &\lim_{\varepsilon\searrow 0} \frac{1}{\varepsilon}\Big(  \delta(t)(f(x)-V(0,x-\varepsilon,S)) \Big)=\delta(t) V_x(0,x-,S).
	\end{align*}  
	Similar argument is applied for the result of $V_x(t,x+,S)$ when $(x,x+h)\subset S^c$.
\end{proof}

Lemmas \ref{lm.local.time} and \ref{lm.v.lrderiv} together indicate that, as long as $V_x(0,x+,S)-V_x(0,x-,S)\neq 0$, the local time integral is the dominating term on the RHS of \eqref{eq.explain.31.2}. Thus, to make the LHS of  \eqref{eq.explain.31.2} non-positive in the limit, $V_x(0,x+,S)-V_x(0,x-,S)$ shall be non-positive. 
Based on this and recalling \eqref{eq.explain.31.0}, we now prove the necessary conditions for a weak equilibrium in the following proposition. The sufficiency part follows next.

\begin{Proposition}\label{prop.inside.cha}
	Let Assumptions \ref{assume.x.elliptic}--\ref{assume.new} hold. Suppose $S$ is an admissible stopping policy. If $S$ is a weak equilibrium, then
	\be\label{eq.prop.weaknoboundary}
	\begin{aligned}
	\begin{cases}
	V_x(0,x+,S)\leq V_x(0,x-,S) & \forall x\in S,\\
\Lc V(0,x+,S)\vee \Lc V(0,x-,S)\leq 0 &  \forall x\in \X.
\end{cases}
	\end{aligned}
	\ee
\end{Proposition}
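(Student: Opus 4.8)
The plan is to read both inequalities off the defining relation \eqref{eq.explain.31.0} of a weak equilibrium by analysing the expansion \eqref{eq.explain.31.1}--\eqref{eq.explain.31.2}, for a carefully chosen localization radius $h$. Fix $x\in S$. Since $S$ is admissible, $x$ is either in $S^\circ$ or a boundary point of type (\ref{boundary.a}) or (\ref{boundary.b}); combined with $\inf_{n\in I}(\theta_{n+1}-\theta_n)>0$ this lets me pick $h>0$ with $\overline{B(x,h)}\subset\X$ such that each of $(x-h,x)$ and $(x,x+h)$ lies entirely in $S^\circ$ or in $S^c$ and contains no $\theta_n$; by Lemma \ref{lm.v.c12}(a) and Assumption \ref{assume.new}, $V(t,\cdot,S)$ is then $\Cc^{1,2}$ on $[0,\infty)\times(x-h,x]$ and on $[0,\infty)\times[x,x+h)$, so Lemma \ref{lm.peskir.ito} applies with $x_0=x$, and Lemma \ref{lm.varepsilon.h} legitimizes passing from $\E^x[V(\eps,X_\eps,S)]$ to its version stopped at $\tau_{B(x,h)}$ up to an $o(\eps)$ error. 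Dividing the resulting identity \eqref{eq.explain.31.2} by $\eps$, the idea is that the drift integral contributes $O(1)$ while the local-time integral, divided by $\eps$, is of order $\eps^{-1/2}$ times the jump $V_x(0,x+,S)-V_x(0,x-,S)$.

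For the first inequality, Lemma \ref{lm.v.c12}(b) bounds $|\Lc V(t,\cdot\pm,S)|$ on $[0,\infty)\times\overline{B(x,h)}$, so $\eps^{-1}$ times the drift integral stays bounded as $\eps\searrow 0$. For the local-time integral I would first show $V_x(s,x+,S)-V_x(s,x-,S)\ge\delta(s)\big(V_x(0,x+,S)-V_x(0,x-,S)\big)$ for all small $s>0$: on a half-ball inside $S^\circ$ this is an equality because there $V(t,\cdot,S)=\delta(t)f(\cdot)$, while on a half-ball inside $S^c$ it follows from Lemma \ref{lm.v.lrderiv}, which gives $V_x(s,x-,S)\le\delta(s)V_x(0,x-,S)$ and $V_x(s,x+,S)\ge\delta(s)V_x(0,x+,S)$ on the corresponding sides. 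If $V_x(0,x+,S)-V_x(0,x-,S)>0$, then since $\delta$ is non-increasing the integrand is $\ge\delta(\eps)\big(V_x(0,x+,S)-V_x(0,x-,S)\big)$ on $[0,\eps]$, so by Lemma \ref{lm.local.time} the local-time integral divided by $\eps$ is eventually at least a positive constant multiple of $\delta(\eps)\,\E^x[L^x_{\eps\wedge\tau_{B(x,h)}}]/\eps$, which tends to $+\infty$ (here $|\sigma(x)|>0$ and $\delta(\eps)\to1$ are used). Hence $\limsup_{\eps\searrow0}\eps^{-1}\big(\E^x[V(\eps,X_\eps,S)]-f(x)\big)=+\infty$, contradicting \eqref{eq.explain.31.0}; so $V_x(0,x+,S)\le V_x(0,x-,S)$ for every $x\in S$.

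For the second inequality, $x\in S^c$ is immediate: on the open set $S^c$, Lemma \ref{lm.v.c12}(a) gives $V(\cdot,\cdot,S)\in\Cc^{1,2}$ near $x$ with $\Lc V(t,x,S)\equiv0$, whence $\Lc V(0,x-,S)=\Lc V(0,x+,S)=0$. For $x\in S^\circ\cap\Gc$ the local-time integral in \eqref{eq.explain.31.2} vanishes (there $V=\delta f$ is $\Cc^{1,2}$ across $x$), while $\eps^{-1}$ times the drift integral converges to $\Lc V(0,x,S)$ by continuity of $\Lc V(\cdot,\cdot,S)$ near $(0,x)$, dominated convergence, and Lemma \ref{lm.step.1} (to discard $\{\tau_{B(x,h)}\le\eps\}$); then \eqref{eq.explain.31.0} forces $\Lc V(0,x,S)\le0$. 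An arbitrary $x\in\X$ is handled by approximation: one side of $x$ is approached through points of $S^c$, where $\Lc V(0,\cdot,S)\equiv0$, and/or the other through points of $S^\circ\cap\Gc$, where $\Lc V(0,\cdot,S)\le0$; Assumption \ref{assume.new} on the $S^c$ side, and $V=\delta f$ with $f$ piecewise $\Cc^2$ on the $S^\circ$ side, make $\Lc V(0,\cdot\pm,S)$ continuous up to $x$ from the relevant side, so letting the approximating points tend to $x$ yields $\Lc V(0,x\pm,S)\le0$.

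The main obstacle is the scale separation inside \eqref{eq.explain.31.2}: the drift contribution is $O(\eps)$ whereas the local-time contribution is of exact order $\sqrt\eps$ times $V_x(0,x+,S)-V_x(0,x-,S)$, which is precisely what makes non-positivity of this jump a necessary condition. Making this rigorous relies on the sharp local-time asymptotics of Lemma \ref{lm.local.time} together with the one-sided control of $V_x(s,x\pm,S)$ as $s\searrow0$ from Lemma \ref{lm.v.lrderiv} (itself a consequence of log sub-additivity of $\delta$). A smaller but genuine point is the choice of $h$ ensuring $V_x(t,\cdot,S)$ has at most its single possible discontinuity at $x$ inside $B(x,h)$, which is exactly where admissibility of $S$ and the spacing $\inf_{n\in I}(\theta_{n+1}-\theta_n)>0$ are needed.
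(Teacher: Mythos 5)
Your proposal is correct and follows essentially the same route as the paper: a contradiction argument for the first inequality driven by the scale separation between the $O(\eps)$ drift term and the $O(\sqrt\eps)$ local-time term (via Lemmas \ref{lm.peskir.ito}, \ref{lm.varepsilon.h}, \ref{lm.v.lrderiv}, \ref{lm.local.time} and \ref{lm.v.c12}(b)), followed by the same three-case analysis for $\Lc V(0,x\pm,S)\le 0$ ($x\in S^c$, $x\in S^\circ\cap\Gc$, and the remaining points by one-sided continuity). The only cosmetic differences are that you use monotonicity of $\delta$ to bound the local-time integrand by $\delta(\eps)\,a$ on $[0,\eps]$ where the paper invokes continuity of $\delta$ to fix a time $T$ with $V_x(s,x+,S)-V_x(s,x-,S)\ge a/2$, and you invoke Lemma \ref{lm.step.1} in the $S^\circ\cap\Gc$ case where the paper's dominated-convergence argument already suffices; neither changes the substance.
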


\begin{proof}
We verify the first inequality in \eqref{eq.prop.weaknoboundary} by contradiction. Take $x\in S$ and suppose 
	\be\label{eq.lm.a} 
	a:=V_x(0,x+,S)-V_x(0,x-,S)>0.
\ee 
	 Recall $\Gc$ defined in \eqref{eq.assume.G}. Choose $h>0$ such that $(x-h,x)\cup(x,x+h)$ is contained in $(\Gc\cap S^\circ)\cup S^c$. By Lemma \ref{lm.v.c12}(a) and Assumption \ref{assume.f.positive}(ii), $V\in \Cc^{1,2}([0,\infty)\times (x-h,x])$ and  $V\in \Cc^{1,2}([0,\infty)\times [x,x+h))$. Then we can  
	apply Lemma \ref{lm.peskir.ito} to get
	\be\label{eq.ito.tanaka}
	\begin{aligned}
		V(\varepsilon\wedge \tau_{B(x,h)}, X_{\varepsilon\wedge \tau_{B(x,h)}},S)-V(0,x,S)=&\int_0^{\varepsilon\wedge \tau_{B(x,h)}} \frac{1}{2}(\Lc V(s,X_{s}-,S)+\Lc V(s, X_s+,S))ds\\
		&+\int_0^{\varepsilon\wedge \tau_{B(x,h)}} V_x(s, X_s,S)\sigma(X_s)\cdot 1_{\{X_s\neq x \}}dW_s\\
		&+\frac{1}{2}\int_0^{\varepsilon\wedge \tau_{B(x,h)}}(V_x(s,x+,S)-V_x(s,x-,S))dL^x_s.
	\end{aligned}
	\ee
	Let $\eps\in(0,1)$, notice that the diffusion integrand above is bounded on $[0,1]\times\overline{B(x,h)}$. Taking expectation on both sides of \eqref{eq.ito.tanaka} and then applying Lemma \ref{lm.varepsilon.h}, we have that
	\be\label{eq.ito.exptanaka}  
	\begin{aligned}
		\E^x[V(\varepsilon, X_{\varepsilon},S)-V(0,x,S)]=&\E^x\left[\int_0^{\varepsilon\wedge \tau_{B(x,h)}} \frac{1}{2}(\Lc V(s,X_{s}-,S)+\Lc V(s, X_s+,S))ds\right]\\
		&+\E^x\left[\frac{1}{2}\int_0^{\varepsilon\wedge \tau_{B(x,h)}}(V_x(s,x+,S)-V_x(s,x-,S))dL^x_s\right]+o(\varepsilon).
	\end{aligned}
	\ee
	By Lemma \ref{lm.v.lrderiv} and \eqref{eq.lm.a},
	$$
	V_x(t,x+,S)-V_x(t,x-,S)\geq \delta(t)(V_x(0,x+,S)-V_x(0,x-,S))=a\delta(t), \quad \forall t\geq 0.
	$$ 
	By the above inequality and the continuity of $\delta$, we can take $T>0$ such that 
	\bee
	V_x(s,x+,S)-V_x(s,x-,S)\geq \frac{a}{2},\quad\forall s\in[0,T].
	\eee
	Then for $\eps\in[0, T\wedge 1]$, the second term on the RHS of \eqref{eq.ito.exptanaka} can be estimated as follows
	\be\label{eq.prop.vderiv} 
	\E^x\left[\frac{1}{2}\int_0^{\varepsilon\wedge \tau_{B(x,h)}}(V_x(s,x+,S)-V_x(s,x-,S))dL^x_s\right]\geq \frac{a}{4}\E^x[L^x_{\tau_{B(x,h)}\wedge \varepsilon}].
	\ee 
	By Lemma \ref{lm.v.c12}(b), we have 
	$$
	\sup_{(t,y)\in [0,1]\times \overline{B(x,h)}} |\Lc V(t,y-,S)+\Lc V(t, y+,S)|<\infty,
	$$ 
	and thus the first term on the RHS of \eqref{eq.ito.exptanaka} is of order $O(\varepsilon)$. Plugging this and \eqref{eq.prop.vderiv} into \eqref{eq.ito.exptanaka} and then applying Lemma \ref{lm.local.time}, we have
$$
		\liminf_{\varepsilon\searrow 0}\frac{1}{\varepsilon} \E^x[V(\varepsilon, X_{\varepsilon},S)-V(0,x,S)]
		\geq  O(1)+\frac{a}{4}\liminf_{\varepsilon\searrow 0}\frac{1}{\varepsilon}\E^x[L^x_{\tau_{B(x,h)}\wedge \varepsilon}]
		=\infty,
$$
	which contradicts $S$ being a weak equilibrium. Hence, $V_x(0,x+,S)-V_x(0,x-,S)\leq 0$.
	
	Next, we verify the second inequality in \eqref{eq.prop.weaknoboundary}. Take $x\in\X$ and we consider three cases.
	
	Case (i) $x\in S^c$. Lemma \ref{lm.v.c12}(a) shows that $\Lc V(0,x,S)=0$.
	
	Case (ii) $x\in \Gc\cap S^\circ$. Choose $h>0$ such that $B(x,h)\subset \Gc\cap S^\circ$. Notice that $V(t,y,S)=\delta(t)f(y)$ for $y\in S^\circ$. Then by Assumptions \ref{assume.x.elliptic}(i) and \ref{assume.f.positive}(ii), we have $V(t,y,S)\in \Cc^{1,2}([0,\infty)\times \overline{B(x,h)})$ and $(t,y)\mapsto \Lc V(t,y,S)$ is continuous on $[0,\infty)\times \overline{B(x,h)}$. Thus,
	$$
	\lim_{\varepsilon\rightarrow 0}\frac{1}{\varepsilon}\int_0^{\varepsilon\wedge \tau_{B(x,h)}}\Lc V(s, X_s,S)ds =\Lc V(0,x,S), \; \P^{x}-\text{a.s.}.$$
	By Lemma \ref{lm.v.c12}(b), $\sup_{(t,y)\in [0,\infty)\times \overline{B(x,h)}} |\Lc V(t,y,S)|<\infty$. Then we can apply the dominated convergence theorem to derive
	\be\label{eq.generator.lv0}    
	\lim_{\varepsilon\rightarrow 0}\frac{1}{\varepsilon}\E^x\left[\int_0^{\varepsilon\wedge \tau_{B(x,h)}}\Lc V(s, X_s,S)ds \right]=\Lc V(0,x,S).
	\ee	
	Notice that \eqref{eq.ito.exptanaka} is valid and the local time integral term in \eqref{eq.ito.exptanaka} vanishes in this case. Then \eqref{eq.ito.exptanaka} and \eqref{eq.generator.lv0} together lead to
	\bee
	\begin{aligned}
		\lim_{\varepsilon\searrow 0}\frac{1}{\varepsilon}\E^x\left[V(\varepsilon, X_{\varepsilon},S)-V(0,x,S)\right]=\lim_{\varepsilon\searrow 0}\frac{1}{\varepsilon}\E^x\left[\int_0^{\varepsilon\wedge \tau_{B(x,h)}} \Lc V(s,X_{s},S)ds\right]=\Lc V(0,x,S).
	\end{aligned}
	\eee
Since $S$ is a weak equilibrium, we have $\Lc V(0,x,S)\leq 0$.
	
	{Case (iii) $x\in S\setminus (\Gc\cap S^\circ)$. As $S$ is admissible, we can pick $h>0$ such that $(x-h,x)$ is contained in either $\Gc\cap S^\circ$ or $S^c$. By the results in Cases (i) and (ii), as well as the continuity of $x\mapsto \Lc V(0,x-,S)$ on $(x-h,x]$, we have that}
		$$\Lc V(0,x-,S)=\lim_{\varepsilon\searrow 0}  \Lc V(0,(x-\varepsilon)-,S)\leq 0.$$
		Similarly, $\Lc V(0,x+,S)\leq 0$.
\end{proof}

\begin{proof}[{\bf Proof of Theorem \ref{prop.weak.cha}}]
The necessity is implied by Proposition \ref{prop.inside.cha}. Let us prove the sufficiency.

	Take $x\in S$. Since $S$ is admissible, by Lemma \ref{lm.v.c12} and Assumption  \ref{assume.f.positive}(ii), no matter $x\in S^\circ$ or $x\in \partial S$, we can choose $h>0$ such that $V(t,x,S)\in \Cc^{1,2}([0,\infty)\times (x-h, x])$ and $V(t,x,S)\in \Cc^{1,2}([0,\infty)\times [x, x+h))$. By a similar argument as that for \eqref{eq.ito.exptanaka} (with  Lemmas \ref{lm.peskir.ito} and \ref{lm.varepsilon.h} applied), we have that
	\be\label{eq.epsilon.tanaka}  
\begin{aligned}
\frac{1}{\varepsilon}(\E^x[V(\varepsilon, X_{\varepsilon},S)]-V(0,x,S))=&\frac{1}{\varepsilon}\E^x\left[\int_0^{\varepsilon\wedge \tau_{B(x,h)}} \frac{1}{2}(\Lc V(s,X_{s}-,S)+\Lc V(s, X_s+,S))ds\right]\\
	&+\frac{1}{\varepsilon}\E^x\left[\frac{1}{2}\int_0^{\varepsilon\wedge \tau_{B(x,h)}}(V_x(s,x+,S)-V_x(s,x-,S))dL^x_s\right]+o(1)
\end{aligned}
\ee	
By \eqref{eq.weak.equicha2} and the (left/right) continuity of $(s,y)\mapsto\Lc V(s,y\pm, S)$ at $(0,x)$, for $\P$-a.s. $\omega\in \Omega$,
$$
\limsup_{s \searrow 0} \frac{1}{2}(\Lc V(s,X_{s}(\omega)-,S)+\Lc V(s, X_s(\omega)+,S))\leq 0,
$$
which leads to
\be\label{eq.lv.0} 
\limsup_{\varepsilon \searrow 0}\frac{1}{\varepsilon}\int_0^{\varepsilon\wedge \tau_{B(x,h)}} \frac{1}{2}(\Lc V(s,X_{s}(\omega)-,S)+\Lc V(s, X_s(\omega)+,S))ds\leq 0.
\ee
By Lemma \ref{lm.v.c12} (b),
$$
\sup_{(t,y)\in [0,1]\times \overline{B(x,h)}} |\Lc V(t,y-,S)+\Lc V(t, y+,S)|<\infty.
$$
This enables us to apply Fatou's lemma for \eqref{eq.lv.0} and get
\be\label{eq.weak.lv} 
\limsup_{\varepsilon \searrow 0}\frac{1}{\varepsilon}\E^x\left[\int_0^{\varepsilon\wedge \tau_{B(x,h)}} \frac{1}{2}(\Lc V(s,X_{s}-,S)+\Lc V(s, X_s+,S))ds\right]\leq 0.
\ee 
By \eqref{eq.vx.lineart},
$$
V_x(t,x+,S)-V_x(t,x-,S)\leq V_x(0,x+,S)-V_x(0,x-,S)+o(\sqrt{t}).
$$
This together with \eqref{eq.weak.equicha1} implies that
\be\label{eq.weak.local}
\begin{aligned}
&\frac{1}{\varepsilon}\E^x\left[\frac{1}{2}\int_0^{\varepsilon\wedge \tau_{B(x,h)}}(V_x(s,x+,S)-V_x(s,x-,S))dL^x_s\right]\\
&\leq \frac{1}{\varepsilon}\E^x\left[\frac{1}{2}\int_0^{\varepsilon\wedge \tau_{B(x,h)}}(V_x(0,x+,S)-V_x(0,x-,S)+o(\sqrt{\eps}))dL^x_s\right]\\
&\leq  \frac{1}{2\varepsilon}\cdot o(\sqrt{\eps})\cdot \E^x\left[L^x_{\varepsilon\wedge \tau_{B(x,h)}}\right]\\
&=\frac{1}{2\varepsilon}\cdot o(\sqrt{\eps})\cdot O(\sqrt{\eps})=o(1),
\end{aligned}
\ee
where the last line follows from Lemma \ref{lm.local.time}.
Then by \eqref{eq.epsilon.tanaka}, \eqref{eq.weak.lv} and \eqref{eq.weak.local} we have that
$$\limsup_{\eps\searrow 0}\frac{1}{\varepsilon}(\E^x[V(\varepsilon, X_{\varepsilon},S)]-V(0,x,S))\leq 0.$$
\end{proof}

\section{Optimal Mild Equilibria are Weak Equilibria}\label{sec:ws.optimal}

In this section, we show that an optimal mild equilibrium is a weak equilibrium.

To begin with, let us point out that optimal mild equilibria exist for one-dimensional diffusions. Such existence result is provided in \cite[Theorem 4.12]{MR4116459}, and we summarize it in the current context as follows.

\begin{Lemma}\label{lm.optimal.mild}
Let Assumptions \ref{assume.x.elliptic}, \ref{assume.delta.deriv}, \eqref{eq.assume.sstar'} and \eqref{eq.assume.wellpose1} hold. Then
	\begin{equation}\label{eq.nota.sstar}
		S^*:=\cap_{S\in\mathcal{E}}S
	\end{equation}
	is an optimal mild equilibrium, where $\mathcal{E}$ is the set containing all mild equilibria. 
\end{Lemma}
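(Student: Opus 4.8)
The plan is to establish that $S^* := \cap_{S \in \mathcal{E}} S$ is itself a mild equilibrium and then verify its optimality directly from Definition \ref{def.optimal.mild}. Since the statement is attributed to \cite[Theorem 4.12]{MR4116459}, the proof here should be a streamlined recollection of that argument adapted to the present notation. The two things to check are: (1) $S^*$ satisfies conditions \eqref{e3} and \eqref{e4} in Definition \ref{def.mild}, hence $S^* \in \mathcal{E}$ (in particular $\mathcal{E}$ is closed under the relevant intersection), and (2) for every other mild equilibrium $R$, we have $J(x, S^*) \geq J(x, R)$ for all $x \in \X$.

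First I would record the key monotonicity/comparison principle: if $R_1, R_2 \in \mathcal{E}$, then $J(x, R_1 \cap R_2) \geq J(x, R_1) \vee J(x, R_2)$ for all $x$. The heuristic is that if the agent continues whenever \emph{either} future self would continue, the composite strategy dominates each individual one; rigorously this is a dynamic-programming/optional-stopping argument using $\rho_{R_1 \cap R_2} \geq \rho_{R_1} \vee \rho_{R_2}$ together with the defining inequalities of mild equilibria (on $R_i \setminus (R_1 \cap R_2)$ stopping is suboptimal relative to continuing by \eqref{e4}-type reasoning, so re-routing the stopping time to the later hitting time of $R_1 \cap R_2$ does not decrease the payoff). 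This is exactly the place where Assumption \ref{assume.delta.deriv} (log sub-additivity of $\delta$) is used, since the time-shift in the discount factor must be controlled: $\delta(\rho + s) \geq \delta(\rho)\delta(s)$ is what makes the ``continue a bit longer'' comparison work. From this pairwise statement one upgrades, via a limiting argument over finite sub-intersections and the well-posedness/integrability bounds \eqref{eq.assume.sstar'} and \eqref{eq.assume.wellpose1} (to pass dominated convergence through $J$), to $J(x, S^*) \geq \sup_{S \in \mathcal{E}} J(x, S)$ for all $x$.

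Next I would verify $S^* \in \mathcal{E}$. For $x \in (S^*)^c$: since $x \notin S$ for at least one $S \in \mathcal{E}$ — in fact one shows $(S^*)^c = \cup_{S} S^c$ and uses that $J(x, S^*) \geq J(x,S) \geq f(x)$ by the comparison principle and \eqref{e3}. For $x \in S^*$: here $\rho_{S^*} = 0$ $\P^x$-a.s. by \eqref{e2} (Remark \ref{rm.close.regular}), since $S^*$ is closed and $x$ is in it, so $J(x, S^*) = f(x) \leq f(x)$, giving \eqref{e4} trivially; closedness of $S^*$ is automatic as an intersection of closed sets. Then optimality is immediate: for any mild equilibrium $R$, $R \in \mathcal{E}$ so $S^* \subset R$, and the comparison principle gives $J(x, S^*) \geq J(x, R)$ for all $x \in \X$, which is exactly Definition \ref{def.optimal.mild}.

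The main obstacle is making the pairwise comparison $J(x, R_1 \cap R_2) \geq J(x, R_1) \vee J(x, R_2)$ fully rigorous — specifically, the strong Markov / stopping-time surgery that shows re-routing from $\rho_{R_1}$ to $\rho_{R_1 \cap R_2}$ is payoff-improving when future selves play $R_1 \cap R_2$, and keeping track of the discount time-shifts via log sub-additivity. One must be careful that the intermediate strategies are themselves well-defined stopping policies and that all expectations are finite (this is where \eqref{eq.assume.wellpose1} is essential). I expect this step to be somewhat delicate but entirely standard given the cited reference, so I would state it cleanly, sketch the optional-stopping computation, and refer to \cite{MR4116459} for the remaining technical details rather than reproducing the full argument.
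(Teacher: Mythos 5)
The paper gives no self-contained proof of this lemma---the text immediately following the statement simply cites \cite[Theorem~4.12]{MR4116459} and notes that the hypotheses used there are implied by the present assumptions---so there is no in-paper argument to compare yours against. Your overall architecture (a pairwise intersection/comparison lemma, a Lindel\"of reduction of the possibly uncountable intersection to a countable one, a dominated-convergence passage to the limit using \eqref{eq.assume.sstar'} and \eqref{eq.assume.wellpose1}, the trivial verification of \eqref{e4} via \eqref{e2}, and then optimality from the comparison) is a reasonable reconstruction of the cited proof's structure, and the places where you invoke log sub-additivity and the integrability assumptions are indeed where they are needed.

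However, the sketch you give of the central pairwise step $J(x, R_1 \cap R_2)\geq J(x,R_1)\vee J(x,R_2)$ is circular as written. You argue that for $y\in R_1\setminus R_2$ ``stopping is suboptimal relative to continuing,'' so a single re-routing of the stopping time from $\rho_{R_1}$ to $\rho_{R_1\cap R_2}$ is payoff-improving. But ``continuing under the policy $R_1\cap R_2$ is at least as good as stopping at $y$'' is exactly the inequality $J(y,R_1\cap R_2)\geq f(y)$, which is one of the two things the lemma must establish. The mild-equilibrium property of $R_2$ only gives $J(y,R_2)\geq f(y)$, and upgrading $J(y,R_2)$ to $J(y,R_1\cap R_2)$ is again the comparison you are trying to prove---a one-shot surgery therefore begs the question. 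The standard way to close this gap is to replace the single re-routing by an alternating sequence of stopping times $\sigma_1:=\rho_{R_1}$, then $\sigma_2:=$ next hitting time of $R_2$ on the event $\{X_{\sigma_1}\notin R_2\}$, then $\sigma_3:=$ next hitting time of $R_1$ on $\{X_{\sigma_2}\notin R_1\}$, and so on. Each step uses \emph{only} the inequality \eqref{e3} for the single set ($R_2$ or $R_1$, alternately) whose complement the process is currently in, together with $\delta(s+t)\geq\delta(s)\delta(t)$ to control the compounded discount under the strong Markov conditioning, so no circularity arises; one then checks that $\sigma_n\uparrow\rho_{R_1\cap R_2}$ (closedness of $R_1,R_2$ handles the finite-accumulation case, and \eqref{eq.assume.sstar'} handles $\sigma_n\to\infty$) and that the expected discounted payoff is monotone along the iteration. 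Without that iterative repair the step you flag as ``delicate but standard'' does not actually go through in the form you sketched.
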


\begin{Remark}
Notice that \eqref{e2} is assumed in \cite[Theorem 4.12]{MR4116459} for $S^*$ being an optimal mild equilibrium, which is guaranteed by Assumption \ref{assume.x.elliptic} as stated in Remark \ref{rm.close.regular}. Also, \eqref{eq.assume.wellpose1} can be deduced from Assumption \ref{assume.f.positive}(i) as stated in Remark \ref{rm.f.wellpose}.
\end{Remark}

Below is the main result of this section. 

\begin{Theorem}\label{thm.optimalmild.weak}
Let Assumptions \ref{assume.x.elliptic}--\ref{assume.new} hold and $S$ be an admissible stopping policy. If $S$ is an optimal mild equilibrium, then it is also a weak equilibrium.
\end{Theorem}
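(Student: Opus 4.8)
The plan is to use the characterization of weak equilibria from Theorem \ref{prop.weak.cha}. Since $S$ is given to be an admissible stopping policy and an optimal mild equilibrium, condition \eqref{e3} (hence \eqref{eq789}) already holds. So it suffices to verify the two remaining conditions: the ``local convexity'' inequality $V_x(0,x-,S)\geq V_x(0,x+,S)$ for all $x\in S$, and the inequality $\Lc V(0,x-,S)\vee\Lc V(0,x+,S)\leq 0$ for all $x\in\X$. The second of these I would try to establish exactly as in Cases (i)--(iii) of the proof of Proposition \ref{prop.inside.cha}: on $S^c$ we have $\Lc V\equiv 0$ by Lemma \ref{lm.v.c12}(a); on $\Gc\cap S^\circ$ we have $V(t,y,S)=\delta(t)f(y)$, so $\Lc V(0,x,S)=\delta'(0)f(x)+\mu(x)f'(x)+\tfrac12\sigma^2(x)f''(x)$, and I must show this is $\leq 0$ on the interior of the stopping region — this is where optimality of the mild equilibrium should be exploited; then the boundary points $x\in S\setminus(\Gc\cap S^\circ)$ follow by taking left/right limits using admissibility, just as in Case (iii).

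For the interior inequality $\Lc V(0,x,S)\leq 0$ on $\Gc\cap S^\circ$, the idea I would pursue is a local comparison/perturbation argument: if $\Lc(\delta(t)f)(0,x)>0$ at some interior point $x_0$, then for a small ball $B(x_0,h)\subset\Gc\cap S^\circ$ the function $\delta(t)f(y)$ is a strict subsolution near $x_0$, which by a standard dynamic-programming comparison would mean that for the modified policy $R:=S\setminus B(x_0,h)$ (still closed, and admissible after possibly adjusting $h$), one has $J(x_0,R)>f(x_0)=J(x_0,S)$; more importantly one can arrange $J(\cdot,R)\geq J(\cdot,S)$ everywhere with strict inequality somewhere, and that $R$ is still a mild equilibrium (checking \eqref{e3}--\eqref{e4}: \eqref{e4} is automatic in one dimension, and \eqref{e3} on $R^c\supset S^c$ needs care near the removed ball). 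This would contradict the optimality of $S$ among all mild equilibria. The delicate point, flagged in the introduction as the reason a new approach is needed relative to \cite{MR4205889}, is that in the diffusion setting removing a small open set rather than a single point changes the stopping time $\rho$ in a subtle way, so the monotonicity $J(\cdot,R)\geq J(\cdot,S)$ and the mild-equilibrium property of $R$ are not immediate and must be argued via the strong Markov property and the subsolution property of $\delta(t)f$.

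For the local convexity inequality $V_x(0,x-,S)\geq V_x(0,x+,S)$ at $x\in S$, the natural split is again by boundary type. If $x\in S^\circ$, then $V(0,\cdot,S)=f$ near $x$ and the claim reduces to $f'(x-)\geq f'(x+)$, i.e. concavity of $f$ at interior stopping points, which I would again derive from optimality of the mild equilibrium (a kink of $f$ pointing the ``wrong'' way at an interior stopping point would let one shrink $S$ profitably, as above). If $x\in\partial S$, I would use that $V(0,\cdot,S)\geq f$ on $S^c$ with equality on $\overline S$ together with the differentiability of $V(0,\cdot,S)$ on $\overline{S^c}$ (Lemma \ref{lm.v.c12}(a)), exactly as in the proof of Corollary \ref{cor.boundary.smoothfit}, to get the one-sided derivative comparison on the continuation side, and combine with the interior-type bound on the stopping side.

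I expect the main obstacle to be precisely the interior subsolution argument: making rigorous that one can excise a piece of $S^\circ$ where $\Lc(\delta f)>0$ (or where $f$ has an upward kink) and obtain a \emph{strictly better mild equilibrium}, while controlling the behavior of the new first-entry time $\rho_R$ near the excised region so that both the mild-equilibrium conditions and the pointwise domination $J(\cdot,R)\geq J(\cdot,S)$ are preserved. The log-subadditivity (Assumption \ref{assume.delta.deriv}) and the estimates in Lemmas \ref{lm.step.1}--\ref{lm.local.time}, together with Lemma \ref{lm.optimal.mild} (which identifies the optimal mild equilibrium as the intersection of all mild equilibria), should be the main tools for pushing this through.
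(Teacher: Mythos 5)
Your overall architecture (go through Theorem \ref{prop.weak.cha}, handle \eqref{eq789} from mildness, and establish \eqref{eq.weak.equicha1}--\eqref{eq.weak.equicha2} by excising small pieces of $S$ and invoking optimality for a contradiction) is the paper's route, and you correctly anticipate the main tools (log-subadditivity via Lemma \ref{lm.delta.inequ}, the exit-time and local-time estimates, the strong Markov property). However, there is a genuine error in your treatment of $V_x(0,x-,S)\geq V_x(0,x+,S)$ at boundary points $x\in\partial S$.

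You propose to handle $x\in\partial S$ by reusing the argument of Corollary \ref{cor.boundary.smoothfit}: comparing one-sided derivatives using $V(0,\cdot,S)\geq f$ on $S^c$ with equality on $\overline S$. But that comparison yields the \emph{opposite} inequality. Concretely, if $(x-h,x)\subset S^c$ and $(x,x+h)\subset S^\circ$, then $V(0,\cdot,S)-f$ is nonnegative on $(x-h,x)$ and vanishes at $x$, so its left derivative at $x$ is $\leq 0$, i.e.\ $V_x(0,x-,S)\leq f'(x-)$; the stopping side gives $V_x(0,x+,S)=f'(x+)$. When $f$ is differentiable at $x$ these combine to $V_x(0,x-,S)\leq V_x(0,x+,S)$, which is exactly what you are \emph{not} allowed to assume (indeed, in the paper this inequality is combined with \eqref{eq.weak.equicha1}, which is already known for a weak equilibrium, to deduce smooth fit). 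The same sign problem appears in the mirrored orientation and for isolated points (boundary case (\ref{boundary.b})), where there is no ``interior-type bound on the stopping side'' to call on at all. So mildness alone cannot give you \eqref{eq.weak.equicha1} at the boundary, and your proposed decomposition fails there.

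The paper resolves this by running the excision/perturbation argument uniformly over \emph{all} points of $S$ — interior and both boundary types — in Proposition \ref{prop.weak.optimalderiv}. For a boundary point of type (\ref{boundary.a}) or an interior point, one removes a small ball $B(x_0,h)$; for an isolated point one removes just $\{x_0\}$. In each case one must show the reduced set is a mild equilibrium that strictly dominates $S$, which requires the local-time vs.\ exit-time comparison of Lemma \ref{lm.hit.localh}, the monotonicity of $V_x(t,x_0\pm,S)$ from Lemma \ref{lm.v.lrderiv}, and a careful verification (Step 2 of the paper's Case (i)) that $J(\cdot,S_h)\geq \E^{\cdot}[V(\tau_{B(x_0,h)},X_{\tau_{B(x_0,h)}},S)]$ near $x_0$ via a minimum-attainment plus strong-Markov contradiction. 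Your high-level description of this excision machinery is apt for the $\Lc V$ inequality and for interior kinks, but you need to apply that same machinery at the boundary rather than the Corollary \ref{cor.boundary.smoothfit} argument, which is unavailable (and directionally wrong) before you know $S$ is weak.
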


	By Lemma \ref{lm.optimal.mild} and Theorem \ref{thm.optimalmild.weak} we have the following.
\begin{Corollary}
	Let Assumptions \ref{assume.x.elliptic}--\ref{assume.new} hold. Suppose $S^*$ is admissible. Then $S^*$ is also weak.
\end{Corollary}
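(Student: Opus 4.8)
The goal is to prove Theorem~\ref{thm.optimalmild.weak}: if $S$ is an admissible optimal mild equilibrium, then $S$ is a weak equilibrium. By the characterization Theorem~\ref{prop.weak.cha}, it suffices to verify the three conditions \eqref{eq789}, \eqref{eq.weak.equicha1} and \eqref{eq.weak.equicha2}. Condition \eqref{eq789} is immediate, since an optimal mild equilibrium is in particular a mild equilibrium and hence satisfies \eqref{e3}. The content is therefore in establishing the ``local convexity'' inequality \eqref{eq.weak.equicha1}, namely $V_x(0,x-,S)\geq V_x(0,x+,S)$ for every $x\in S$, and the parabolic inequality \eqref{eq.weak.equicha2}, namely $\Lc V(0,x-,S)\vee\Lc V(0,x+,S)\leq 0$ for every $x\in\X$.

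\textbf{Plan for \eqref{eq.weak.equicha2}.} I would split by cases on the location of $x$ exactly as in the proof of Proposition~\ref{prop.inside.cha}. When $x\in S^c$, Lemma~\ref{lm.v.c12}(a) gives $\Lc V(0,x,S)=0$. When $x\in\Gc\cap S^\circ$, on a small ball $V(t,y,S)=\delta(t)f(y)$, so $\Lc V(0,x,S)=\delta'(0)f(x)+\mu(x)f'(x)+\tfrac12\sigma^2(x)f''(x)$; the claim $\Lc V(0,x,S)\leq 0$ there does \emph{not} follow from Lemma~\ref{lm.v.c12} alone and must use optimality. Here is where I expect to use a comparison/perturbation argument: since $S=S^*$ is the smallest mild equilibrium, removing a small neighborhood of $x$ from $S$ cannot produce another mild equilibrium, and translating this ``the set $S\setminus B(x,h)$ violates \eqref{e3} or \eqref{e4}'' back into an inequality should force $\Lc V(0,x,S)\le 0$; alternatively, one compares $J(\cdot,S)$ with $J(\cdot,S\cup\{\text{tiny interval}\})$ to exploit pointwise dominance of the optimal mild equilibrium. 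Once the interior inequalities hold, the remaining boundary points $x\in S\setminus(\Gc\cap S^\circ)$ are handled by the same limiting/continuity argument used in Case (iii) of Proposition~\ref{prop.inside.cha}, taking one-sided limits of $\Lc V(0,\cdot\pm,S)$.

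\textbf{Plan for \eqref{eq.weak.equicha1}.} For $x\in S^\circ$ with $f$ smooth near $x$ the two one-sided derivatives coincide; the interesting case is $x\in\partial S$ (cases \eqref{boundary.a} and \eqref{boundary.b}). On the side where a punctured interval lies in $S^c$, the inequality $V(0,y,S)\geq f(y)$ there together with $V(0,x,S)=f(x)$ gives, after dividing by the increment and letting it vanish, $V_x(0,x+,S)\geq f'(x+)$ (or $V_x(0,x-,S)\leq f'(x-)$), exactly as in the proof of Corollary~\ref{cor.boundary.smoothfit}; on the side lying in $S^\circ$, $V(0,\cdot,S)=f$ identically so the derivative equals $f'$. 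Since $f'$ is continuous at such boundary points when it exists, one gets $V_x(0,x-,S)\le f'(x)\le V_x(0,x+,S)$. But \eqref{eq.weak.equicha1} needs the \emph{reverse} inequality. This is the genuine obstacle: for an optimal mild equilibrium we must rule out a strict ``kink the wrong way'' at $\partial S$, and this cannot be purely local --- it must again invoke optimality (pointwise maximality of $J(\cdot,S^*)$ among mild equilibria). The intended mechanism, I believe, is: if $V_x(0,x+,S)>V_x(0,x-,S)$ at some $x\in\partial S$, then a small modification of $S$ near $x$ (shrinking or enlarging the stopping region) yields a mild equilibrium whose value strictly exceeds $J(\cdot,S)$ at nearby points, contradicting optimality; making this rigorous requires controlling how $J(\cdot,\cdot)$ changes under such perturbations, which is precisely the ``different approach'' the introduction promises in place of the Markov-chain technique of removing a single point.

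\textbf{Expected main difficulty.} The hard part is \eqref{eq.weak.equicha1} at boundary points, because the easy local estimates give the inequality in the wrong direction, so one must combine them with a global optimality/perturbation argument that quantifies the change of the map $S\mapsto J(\cdot,S)$ under localized modifications of the stopping region. I would organize the proof so that the perturbation analysis (comparing $V(\cdot,\cdot,S)$ with $V(\cdot,\cdot,\widetilde S)$ for $\widetilde S$ obtained by adding or deleting a small interval at $x$) is done once and then applied to yield both \eqref{eq.weak.equicha1} and the interior case of \eqref{eq.weak.equicha2}; the measure-theoretic bookkeeping (ensuring the modified set is still admissible, handling the two boundary types, and passing to limits as the perturbation size tends to zero) will be the bulk of the technical work.
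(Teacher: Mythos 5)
You have reinterpreted the corollary as the underlying Theorem~\ref{thm.optimalmild.weak}; that is fair, but note the paper's proof of the corollary itself is one line: combine Lemma~\ref{lm.optimal.mild} (which says $S^*$ is an optimal mild equilibrium) with Theorem~\ref{thm.optimalmild.weak}. Your plan never explicitly invokes Lemma~\ref{lm.optimal.mild}, though it is needed to bridge ``$S^*$ admissible'' to ``$S^*$ is an optimal mild equilibrium.''

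As a plan for Theorem~\ref{thm.optimalmild.weak}, your outline is pointed in the right direction and correctly identifies the genuine difficulty: the cheap one-sided estimates give $V_x(0,x-,S)\le V_x(0,x+,S)$ at boundary points, which is the \emph{opposite} of what \eqref{eq.weak.equicha1} requires, so optimality must be brought in via a perturbation that would produce a strictly better mild equilibrium. That is indeed the paper's strategy (Proposition~\ref{prop.weak.optimalderiv} proves \eqref{eq.weak.equicha1} by contradiction; the interior case of \eqref{eq.weak.equicha2} is handled by a parallel contradiction argument in the proof of Theorem~\ref{thm.optimalmild.weak}). However, several concrete pieces are missing and cannot be waved away.

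First, the central technical ingredient of the paper's perturbation argument is Lemma~\ref{lm.hit.localh}, which shows
$\E^{x_0+rh}\bigl[\int_0^{\tau_{B(x_0,h)}}\delta(t)\,dL^{x_0}_t\bigr]\cdot h \big/ \E^{x_0+rh}[\tau_{B(x_0,h)}]\to \sigma^2(x_0)/(1+|r|)$ uniformly in $r$. Without this comparison of the expected local-time integral against the expected exit time, you cannot conclude that removing a small ball $B(x_0,h)$ makes the local-time gain (coming from the putative positive jump $a=V_x(0,x_0+,S)-V_x(0,x_0-,S)>0$) dominate the drift/generator term of order $O(\E[\tau_{B(x_0,h)}])$. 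Your plan says only that ``making this rigorous requires controlling how $J$ changes,'' but this lemma is exactly that control, and it is nontrivial. Second, after the local-time estimate gives $\E^y[V(\tau_{B(x_0,h)},X_{\tau_{B(x_0,h)}},S)]>f(y)$ on $B(x_0,h)$, it remains to verify that $S\setminus B(x_0,h)$ (or $S\setminus\{x_0\}$, depending on the boundary type) is itself a mild equilibrium; the paper does this in Steps 2–3 of Case (i) of Proposition~\ref{prop.weak.optimalderiv} via the strong Markov property and the log-subadditivity \eqref{eq.assume.DI}, and this step is absent from your plan. Third, you dismiss interior points of $S$ where $f$ is smooth as trivial for \eqref{eq.weak.equicha1}, but you do not address interior points $x\in S^\circ\setminus\Gc$ (i.e.\ $x=\theta_n$) where $f$ may have a kink; Proposition~\ref{prop.weak.optimalderiv} explicitly covers these via Case (iii), using exactly the same deletion-of-a-ball mechanism. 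In short: right strategy, correctly identified obstruction, but the quantitative local-time lemma and the verification that the perturbed set is still a mild equilibrium are genuine gaps, not ``measure-theoretic bookkeeping.''
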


\begin{Remark}\label{rm.sstar.best}
	The above corollary also provides the existence of weak equilibria (ignoring admissibility) as a by-product. Moreover, since any weak equilibrium is also mild, we can see that $S^*$ is optimal among all mild and weak equilibria.
\end{Remark}

\subsection{Proof of Theorem \ref{thm.optimalmild.weak}}

	To show an optimal mild equilibrium $S$ is a weak equilibrium, by Theorem \ref{prop.weak.cha} it suffices to verify \eqref{eq.weak.equicha1} and \eqref{eq.weak.equicha2} for $S$. \eqref{eq.weak.equicha1} will be proved in Proposition \ref{prop.weak.optimalderiv} by contradiction. In particular, if we assume $V_x(0,x_0+,S)-V_x(0,x_0-,S)>0$, then a mild equilibrium {\it better} than $S$ can be construct by ``digging a small hole $B(x_0,h)$" out of $S$. The proof of \eqref{eq.weak.equicha2} is also carried out via contradiction by finding a  {\it better} mild equilibrium.
	
Such construction of a  {\it better} mild equilibrium requires the comparison between the expectation of a local time integral before the exit time $\tau_{B(x,h)}$ and the expectation of $\tau_{B(x,h)}$ for small $h$, which is stated in the following lemma.

\begin{Lemma}\label{lm.hit.localh}
Suppose Assumptions \ref{assume.x.elliptic} and \ref{assume.delta.deriv} hold. For $x_0\in \X$,  we have that
\begin{align}
\label{eq.hit.local2}  \dfrac{\E^{x_0+rh}[\int_0^{\tau_{B(x_0, h)}}\delta(t) dL^{x_0}_t]\cdot h}{\E^{x_0+rh}[\tau_{B(x_0, h)}]} \overset{h\searrow 0}{\longrightarrow} \dfrac{\sigma^2(x_0)}{1+|r|}\quad \text{uniformly for $r\in (-1,1)$}.
\end{align}
\end{Lemma}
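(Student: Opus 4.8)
The plan is to find the leading-order behaviour of the numerator and the denominator of the ratio in \eqref{eq.hit.local2} separately as $h\searrow0$, showing
\[
\E^{x_0+rh}\Big[\int_0^{\tau_{B(x_0,h)}}\!\!\delta(t)\,dL^{x_0}_t\Big]=h(1-|r|)\bigl(1+o(1)\bigr),\qquad
\E^{x_0+rh}[\tau_{B(x_0,h)}]=\frac{h^2(1-r^2)}{\sigma^2(x_0)}\bigl(1+o(1)\bigr),
\]
both uniformly in $r\in(-1,1)$; multiplying the first by $h$, dividing by the second, and writing $1-r^2=(1-|r|)(1+|r|)$ then yields the limit $\sigma^2(x_0)/(1+|r|)$, the factor $1-|r|$ cancelling. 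I expect the main difficulty to be the uniformity as $|r|\to1$: there both quantities vanish at rate $1-|r|$, so it is not enough to bound the error terms by $o(1)$ times $h$ — each must be of smaller order than $h(1-|r|)$ (resp.\ $h^2(1-|r|)$), which is why the moment estimates below have to be tracked with their explicit $(1-|r|)$ dependence. Throughout write $\tau_h:=\tau_{B(x_0,h)}$, and take $h$ small enough that $\overline{B(x_0,h)}\subset\X$.

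For the denominator I would use an ODE comparison. The function $u(y):=\E^y[\tau_h]$ is finite and solves $\mu u'+\tfrac12\sigma^2u''=-1$ on $B(x_0,h)$ with $u=0$ on $\partial B(x_0,h)$. Comparing with the frozen-coefficient solution $\tilde u(y):=\sigma^{-2}(x_0)\bigl(h^2-(y-x_0)^2\bigr)$ (which solves $\tfrac12\sigma^2(x_0)\tilde u''=-1$, vanishes on $\partial B(x_0,h)$, and satisfies $|\tilde u'|\le2h/\sigma^2(x_0)$, $|\tilde u''|=2/\sigma^2(x_0)$), the difference $w:=u-\tilde u$ vanishes on the boundary and satisfies $\mu w'+\tfrac12\sigma^2w''=g$ with $\sup_{B(x_0,h)}|g|\le\epsilon(h)\to0$ by the continuity of $\mu$ and $\sigma^2$. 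Feynman--Kac gives $|w(y)|=\bigl|\E^y[\int_0^{\tau_h}g(X_s)\,ds]\bigr|\le\epsilon(h)u(y)$, hence $u(y)=\tilde u(y)\bigl(1+O(\epsilon(h))\bigr)$ uniformly; evaluating at $y=x_0+rh$ gives the stated asymptotics, and in particular $\E^{x_0+rh}[\tau_h]\le Ch^2(1-|r|)$. The same argument applied to $v(y):=\E^y[\tau_h^2]$, which (by the Markov property) solves $\mu v'+\tfrac12\sigma^2v''=-2u$ with $v=0$ on the boundary, gives $\E^{x_0+rh}[\tau_h^2]=v(x_0+rh)=2\E^{x_0+rh}[\int_0^{\tau_h}u(X_s)\,ds]\le2\,\|u\|_{\infty,B(x_0,h)}\,\E^{x_0+rh}[\tau_h]\le Ch^4(1-|r|)$.

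For the expected local time the key observation is that, $X$ having continuous paths and $\tau_h<\infty$ a.s., $X_{\tau_h}\in\{x_0-h,x_0+h\}$, so $|X_{\tau_h}-x_0|=h$ identically. Applying Tanaka's formula to $|X_t-x_0|$ on $[0,\tau_h]$ — the Brownian integral being a true martingale since $\sigma$ is bounded on $\overline{B(x_0,h)}$ and $\E^y[\tau_h]<\infty$ — and taking expectations,
\[
h=|r|h+\E^{x_0+rh}\Big[\int_0^{\tau_h}\!\mathrm{sgn}(X_s-x_0)\mu(X_s)\,ds\Big]+\E^{x_0+rh}\bigl[L^{x_0}_{\tau_h}\bigr],
\]
and the middle term is $O\bigl(\sup_{\overline{B(x_0,h)}}|\mu|\cdot\E^{x_0+rh}[\tau_h]\bigr)=O\bigl(h^2(1-|r|)\bigr)$ by the previous step, so $\E^{x_0+rh}[L^{x_0}_{\tau_h}]=h(1-|r|)\bigl(1+O(h)\bigr)$ uniformly. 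I would also record the second-moment bound $\E^{x_0+rh}[(L^{x_0}_{\tau_h})^2]\le Ch^2(1-|r|)$: with $p(y):=\E^y[L^{x_0}_{\tau_h}]$ (continuous on $\overline{B(x_0,h)}$, $\Cc^2$ off $x_0$, with the single kink $p'(x_0-)-p'(x_0+)=2$ forced by the martingale property), the process $M_t:=p(X_{t\wedge\tau_h})+L^{x_0}_{t\wedge\tau_h}$ is a martingale with $M_{\tau_h}=L^{x_0}_{\tau_h}$, $M_0=p(x_0+rh)\le Ch(1-|r|)$, and $d\langle M\rangle_t=p'(X_t)^2\sigma^2(X_t)\,dt\le C\,dt$ on $[0,\tau_h]$ ($p'$ being bounded near $x_0$); hence $\E[(L^{x_0}_{\tau_h})^2]=M_0^2+\E[\langle M\rangle_{\tau_h}]\le Ch^2(1-|r|)^2+C\,\E[\tau_h]\le C'h^2(1-|r|)$.

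Finally I would remove the discount factor by a squeeze. Since $\delta$ is nonincreasing with $\delta(0)=1$, $\delta(\tau_h)L^{x_0}_{\tau_h}\le\int_0^{\tau_h}\delta(t)\,dL^{x_0}_t\le L^{x_0}_{\tau_h}$, and by Lemma \ref{lm.delta.inequ}, $0\le\bigl(1-\delta(\tau_h)\bigr)L^{x_0}_{\tau_h}\le|\delta'(0)|\,\tau_h L^{x_0}_{\tau_h}$; Cauchy--Schwarz together with the bounds $\E^{x_0+rh}[\tau_h^2]\le Ch^4(1-|r|)$ and $\E^{x_0+rh}[(L^{x_0}_{\tau_h})^2]\le Ch^2(1-|r|)$ gives $\E^{x_0+rh}[\tau_hL^{x_0}_{\tau_h}]\le Ch^3(1-|r|)=o\bigl(h(1-|r|)\bigr)$ uniformly, so $\E^{x_0+rh}[\int_0^{\tau_h}\delta(t)\,dL^{x_0}_t]=\E^{x_0+rh}[L^{x_0}_{\tau_h}]+o\bigl(h(1-|r|)\bigr)=h(1-|r|)\bigl(1+o(1)\bigr)$ uniformly, and dividing by the denominator estimate gives \eqref{eq.hit.local2}. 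The places most likely to need care are making every comparison estimate genuinely uniform in $r$ (i.e.\ checking the error terms really do carry the $1-|r|$ factor, which is what makes the limit finite at $|r|\to1$) and the regularity inputs (that $u,p\in\Cc^2$ off $x_0$ and that $p'$ is bounded near $x_0$), which follow from standard one-dimensional diffusion theory, e.g.\ the scale-function/speed-measure representation of $u$ and $p$.
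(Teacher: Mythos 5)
Your proof is correct, and it reaches the stated uniform limit, but for the numerator it takes a genuinely different route from the paper, and the difference is worth noting. You compute the \emph{undiscounted} expected local time $\E^{x_0+rh}[L^{x_0}_{\tau_h}]$ by Tanaka's formula applied to $|z-x_0|$, and then remove the discount a posteriori via the squeeze $\delta(\tau_h)L^{x_0}_{\tau_h}\le\int_0^{\tau_h}\delta\,dL^{x_0}\le L^{x_0}_{\tau_h}$ combined with Cauchy--Schwarz; this forces you to establish second-moment estimates for $\tau_h$ and $L^{x_0}_{\tau_h}$ (the latter via the Green's function $p(y)=\E^y[L^{x_0}_{\tau_h}]$, its kink $p'(x_0-)-p'(x_0+)=2$, and the $L^2$-martingale $p(X_{t\wedge\tau_h})+L^{x_0}_{t\wedge\tau_h}$). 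The paper instead applies the Peskir--It\^o--Tanaka formula of Lemma~\ref{lm.peskir.ito} directly to the \emph{discounted} kernel $\mathfrak g(t,z)=\delta(t)|z-x_0|$, so that the object $\int_0^{\tau_h}\delta(t)\,dL^{x_0}_t$ appears intact as the local-time term and the only error is the drift integral of $\Lc\mathfrak g$, bounded by $(h|\delta'(0)|+K)\,\E^{x_0+rh}[\tau_h]$ using Lemma~\ref{lm.delta.inequ}; dividing by $\E^{x_0+rh}[\tau_h]$ then lands on the answer without any second-moment machinery. In short, the paper absorbs the discount into the test function, whereas you peel it off afterwards --- both are valid, but the paper's version avoids the $\E[\tau_h^2]$, $\E[(L^{x_0}_{\tau_h})^2]$ and Green's-function computations entirely. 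For the denominator the two arguments are essentially the same comparison with the frozen-coefficient quadratic; the paper packages it as a super/subsolution test with $g(t,z)=a(z-x_0)^2-t$ for $a$ near $1/\sigma^2(x_0)$, you package it as a multiplicative error bound on $u=\E^{\cdot}[\tau_h]$ versus $\tilde u$, and both deliver the uniform-in-$r$ limit \eqref{eq.hit.local1}. One small caveat on your side: the kink condition $p'(x_0-)-p'(x_0+)=2$ and the boundedness of $p'$ on $\overline{B(x_0,h)}$ uniformly in small $h$ are stated but not proved; they are standard (e.g.\ via the scale/speed representation, or by defining $q$ from the ODE-plus-kink problem, showing $q(X_{\cdot\wedge\tau_h})+L^{x_0}_{\cdot\wedge\tau_h}$ is a martingale, and concluding $p=q$), but if you keep this route you should spell that out since the uniformity in $r$ as $|r|\to 1$ ultimately rests on it.
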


\begin{proof}
We first prove
\be\label{eq.hit.local1}  
\dfrac{\E^{x_0+rh}[\tau_{B(x_0, h)}]}{(1-r^2)h^2}\overset{h\searrow 0}{\longrightarrow}\dfrac{1}{\sigma^2(x_0)}\quad  \text{uniformly for $r\in (-1,1)$}
\ee
by using an argument similar to that for \cite[Lemma A.5]{MR4080735}.
 Pick a constant $a$ and consider the function 
$g(t,z):=a(z-x_0)^2-t.$
We have that
$$\Lc g(t,z)=-1+a \sigma^2(z)+\mu(z)2a(z-x_0).$$
By Assumption \ref{assume.x.elliptic}(ii), $\sigma^2(x_0)>0$. For any constant $a>\frac{1}{\sigma^2(x_0)}$, by the continuity of $\mu(x)$ and $\sigma(x)$, we can find $h>0$, which only depends on $a$, such that $\Lc g(t,z)\geq 0$ for any $z\in B(x_0,h)$. Applying Ito's formula to $g(t,X_t)$, we have that
$$
a\E^y\left[(X_{\tau_{B(x_0, h)}}-x_0)^2\right]-\E^y[\tau_{B(x_0, h)}]-a(y-x_0)^2=\E^y\left[\int_0^{\tau_{B(x_0, h)}}\Lc g(t,X_t)dt\right]\geq 0,\quad \forall y\in B(x_0,h).
$$
For $y\in B(x_0,h)$, rewrite $y=x_0+rh$ for some $r\in (-1,1)$. Then the above inequality leads to
\be\label{eq.hit.rh0}  
\E^{x_0+rh}[\tau_{B(x_0,h)}]\leq a\E^{x_0+rh}\left[(X_{\tau_{B(x_0,h)}}-x_0)^2\right]-a(rh)^2= ah^2-ar^2h^2=a(1-r^2)h^2,\quad \forall r\in (-1,1).
\ee
Similarly, for any constant $0<\tilde{a}<\frac{1}{\sigma^2(x_0)}$, we can find $\tilde{h}$ which only depends on $\tilde{a}$, such that $\Lc g(t,z)\leq 0$ on $B(x_0, \tilde{h})$, and
\be\label{eq.hit.rh1} 
\E^{x_0+r\tilde{h}}[\tau_{B(x_0,\tilde{h})}]\geq \tilde{a}(1-r^2)\tilde{h}^2,\quad \forall r\in (-1,1).
\ee
By \eqref{eq.hit.rh0} and \eqref{eq.hit.rh1}, for any $H\in (0,h\wedge \tilde{h}]$ we have that
$$
\tilde{a}\leq \frac{\E^{x_0+rH}[\tau_{B(x_0,H)}]}{(1-r^2)H^2}\leq a,\quad \text{for all } r\in (-1,1),\ \tilde{a}\in\left(0,\frac{1}{\sigma^2(x_0)}\right)\; \text{and}\; a>\frac{1}{\sigma^2(x_0)}.
$$    
Let $\tilde{a}=\frac{1}{\sigma^2(x_0)}-\varepsilon$ and $a=\frac{1}{\sigma^2(x_0)}+\varepsilon$ for any $\varepsilon>0$ and then take $H\searrow 0$ for the above inequality. By the arbitrariness of $\eps$, \eqref{eq.hit.local1} follows.

Next, we prove \eqref{eq.hit.local2}. Consider the function
$\mathfrak{g}(t,z):=\delta(t)|z-x_0|.$
For $h>0$ and $y\in B(x_0,h)$, applying Lemma \ref{lm.peskir.ito} to $\mathfrak{g}(t, X_t)$ we have that
\be\label{eq.int.deltaL}  
\begin{aligned}
	\E^{y}[\delta(\tau_{B(x_0, h)})|X_{\tau_{B(x_0, h)}}-x_0|]-|y-x_0|=&\E^{y}\left[\int_0^{\tau_{B(x_0, h)}} \frac{1}{2}(\Lc \mathfrak{g}(t,X_t-)+\Lc \mathfrak{g}(t,X_t+))dt\right]\\
	&+\E^{y}\left[\int_0^{\tau_{B(x_0, h)}} \frac{1}{2}(1-(-1))\delta(t) dL^{x_0}_t\right].
\end{aligned}
\ee
By Lemma \ref{lm.delta.inequ}, $|\delta'(t)|\leq |\delta'(0)|\delta(t)\leq |\delta'(0)|$. This implies that 
\be\label{eq.int.deltaL0}    
\begin{aligned}
	\left|\frac{1}{2}\big(\Lc \mathfrak{g}(t,z-)+\Lc  \mathfrak{g}(t,z+)\big)\right|\leq& |\delta'(t)|\cdot |z-x_0|+ \delta(t)|\mu(z)|\leq |\delta'(0)|\cdot |z-x_0|+|\mu(z)|.
\end{aligned}
\ee
By \eqref{eq.int.deltaL}  and \eqref{eq.int.deltaL0}, we have that
\be\label{e12}
\begin{aligned}
	&h\E^y[\delta(\tau_{B(x_0, h)})]-|y-x_0|-\E^y\left[\int_0^{\tau_{B(x_0, h)}}(|\delta'(0)||X_t-x_0|+|\mu(X_t)|)dt\right]\\
	&\leq  \E^y\left[\int_0^{\tau_{B(x_0, h)}}\delta(t) dL^{x_0}_t\right]\\
	& \leq  h\E^y[\delta(\tau_{B(x_0, h)})]-|y-x_0|+\E^y\left[\int_0^{\tau_{B(x_0, h)}}(|\delta'(0)||X_t-x_0|+|\mu(X_t)|)dt\right].
\end{aligned}
\ee
Notice that $|X_t-x|\leq h$ for $t\leq\tau_{B(x_0, h)}$ and $\sup_{z\in B(x_0,1)}|\mu(z)|\leq K$ for some constant $K>0$ that depends on $x_0$. Then for $h\leq 1$, by rewriting $y=x_0+rh$ in \eqref{e12} we have that
\bee
\begin{aligned}
	&h\E^{x_0+rh}[\delta(\tau_{B(x_0, h)})]-h|r|-(h|\delta'(0)|+K)\cdot \E^{x_0+rh}[\tau_{B(x_0, h)}]\\
	& \leq \E^{x_0+rh}\left[\int_0^{\tau_{B(x_0, h)}}\delta(t) dL^{x_0}_t\right]\\
	& \leq h\E^{x_0+rh}[\delta(\tau_{B(x_0, h)})]-h|r|+(h|\delta'(0)|+K)\cdot \E^{x_0+rh}[\tau_{B(x_0, h)}], \quad \forall r\in (-1,1).
\end{aligned}
\eee 
Then
\be\label{eq.hit.local0} 
\begin{aligned}
	&\dfrac{h^2(\E^{x_0+rh}[\delta(\tau_{B(x_0, h)})]-|r|)}{\E^{x_0+rh}[\tau_{B(x_0,h)}]}-\dfrac{(|\delta'(0)|h^2+Kh)\cdot \E^{x_0+rh}[\tau_{B(x_0, h)}]}{\E^{x_0+rh}[\tau_{B(x_0,h)}]}\\
	&\leq \Big(h\E^{x_0+rh}\left[\int_0^{\tau_{B(x_0, h)}}\delta(t) dL^{x_0}_t\right]\Big)/\Big(\E^{x_0+rh}[\tau_{B(x_0, h)}]\Big)\\
	& \leq  \dfrac{h^2(\E^{x_0+rh}[\delta(\tau_{B(x_0, h)})]-|r|)}{\E^{x_0+rh}[\tau_{B(x_0,h)}]}+\dfrac{(|\delta'(0)|h^2+Kh)\cdot \E^{x_0+rh}[\tau_{B(x_0, h)}]}{\E[\tau_{B(x_0,h)}]}, \quad \forall r\in (-1,1).
\end{aligned}
\ee 
By the second inequality in Lemma \ref{lm.delta.inequ}, it holds uniformly in $r\in(-1,1)$ that
\begin{align*}
|\E^{x_0+rh}[\delta(\tau_{B(x_0, h)})]-1|=\E^{x_0+rh}[1-\delta(\tau_{B(x_0, h)})]\leq |\delta'(0)|\cdot\E^{x_0+rh}[\tau_{B(x_0, h)}]\overset{h\searrow 0}{\longrightarrow} 0.
\end{align*}
This together with \eqref{eq.hit.local1} implies that
\begin{equation}\label{e34}
\dfrac{h^2(\E^{x_0+rh}[\delta(\tau_{B(x_0, h)})]-|r|)}{\E^{x_0+rh}[\tau_{B(x_0,h)}]}\overset{h\searrow 0}{\longrightarrow}\frac{\sigma^2(x_0)}{1-r^2}\cdot (1-|r|)=\frac{\sigma^2(x_0)}{1+|r|} \quad\text{uniformly for } r\in (-1,1).
\end{equation}
Notice that $\lim_{h\searrow 0}(|\delta'(0)|h^2+Kh)=0$. This together with \eqref{eq.hit.local0} and \eqref{e34} implies \eqref{eq.hit.local2}.
\end{proof}

Now we are ready to deal with \eqref{eq.weak.equicha1} in the following proposition. The verification for \eqref{eq.weak.equicha2} follows next.

\begin{Proposition}\label{prop.weak.optimalderiv}
Let Assumptions \ref{assume.x.elliptic}--\ref{assume.new} hold and $S$ be an admissible stopping policy. If $S$ is an optimal mild equilibrium, then
$$
V_x(0,x-,S)\geq V_x(0,x+,S)\quad \forall x\in S.
$$
\end{Proposition}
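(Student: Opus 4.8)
The plan is to argue by contradiction: suppose $S$ is an optimal mild equilibrium but there exists $x_0\in S$ with $a:=V_x(0,x_0+,S)-V_x(0,x_0-,S)>0$. Since $S$ is admissible, $x_0$ is either of boundary type (\ref{boundary.a}) or (\ref{boundary.b}), or it lies in $S^\circ$; in all cases, by shrinking $h$ I can assume that $B(x_0,h)\setminus\{x_0\}$ meets $\mathcal{G}$ and that on each side of $x_0$ the set $B(x_0,h)$ is either inside $S^\circ\cap\mathcal{G}$ or inside $S^c$. I then define the candidate competitor $R:=S\setminus B(x_0,h)$ (``digging a small hole''), which is again closed, and I want to show that for sufficiently small $h$, $R$ is a mild equilibrium that strictly dominates $S$ at some point, contradicting optimality of $S$.

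The key computation is to compare $J(\cdot,R)$ and $J(\cdot,S)$. For $y\in B(x_0,h)$, starting from $y$ the process under policy $R$ first exits $B(x_0,h)$ at time $\tau_{B(x_0,h)}$, landing at $x_0\pm h$, and then behaves as under $S$; hence $V(\cdot,y,R)=\mathbb{E}^y[V(\tau_{B(x_0,h)},X_{\tau_{B(x_0,h)}},S)]$. Applying Lemma \ref{lm.peskir.ito} to $V(t,z,S)$ on $[0,\tau_{B(x_0,h)}]$ (legitimate since $V(\cdot,\cdot,S)\in\Cc^{1,2}$ on each side of $x_0$ by Lemma \ref{lm.v.c12}(a) and the choice of $h$), together with $\Lc V(t,z,S)=0$ on $S^c$ and the bound on $\Lc V(t,z\pm,S)$ from Lemma \ref{lm.v.c12}(b), I get
\begin{align*}
V(0,y,R)-V(0,y,S)=\tfrac12\,\mathbb{E}^y\Big[\int_0^{\tau_{B(x_0,h)}}\!\!(V_x(t,x_0+,S)-V_x(t,x_0-,S))\,dL^{x_0}_t\Big]+O(h^2\text{-type error}),
\end{align*}
and by Lemma \ref{lm.v.lrderiv}, $V_x(t,x_0+,S)-V_x(t,x_0-,S)\ge \delta(t)a$, so the local-time term is bounded below by $\tfrac{a}{2}\mathbb{E}^y[\int_0^{\tau_{B(x_0,h)}}\delta(t)\,dL^{x_0}_t]$. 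Writing $y=x_0+rh$ and invoking Lemma \ref{lm.hit.localh}, the ratio of this local-time term to $\mathbb{E}^y[\tau_{B(x_0,h)}]$ tends to $\tfrac{a}{2}\cdot\tfrac{\sigma^2(x_0)}{h(1+|r|)}$, which dominates the error terms (those being $O(\mathbb{E}^y[\tau_{B(x_0,h)}]\cdot\text{const})$, i.e. of the same order divided by an extra factor $1/h$) for $h$ small. Hence $V(0,y,R)>V(0,y,S)$ for all $y\in B(x_0,h)$ once $h$ is small enough; in particular $R$ strictly dominates $S$ on this ball.

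It remains to check that $R$ is in fact a mild equilibrium, i.e. that \eqref{e3}--\eqref{e4} hold for $R$. Condition \eqref{e4} is automatic in the one-dimensional setting. For \eqref{e3}: on $S^c$ one has $J(y,R)=J(y,S)\ge f(y)$ since the hole is inside $S$ and the exit behavior is unchanged there (more precisely, $\rho_R=\rho_S$ $\mathbb{P}^y$-a.s. for $y\notin S$, as the hole lies in $S$ and $X$ must still first enter $S\setminus B(x_0,h)$ or pass through a point of $S$—this needs the choice of $h$ so that the ``walls'' $x_0\pm h$, if they belong to $S$, keep $\rho$ unchanged; when a side is in $S^c$ the argument is local around $x_0$); on the newly-created continuation region $B(x_0,h)\cap S$ we just showed $J(y,R)=V(0,y,R)>V(0,y,S)=f(y)$, using $V(0,\cdot,S)=f$ on $S$ from Lemma \ref{lm.v.c12}(a). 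Thus $R$ is a mild equilibrium strictly dominating $S$ at every point of $B(x_0,h)\cap S\neq\emptyset$, contradicting the optimality of $S$. Therefore $a\le 0$, which is the claim.

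The main obstacle I anticipate is the bookkeeping around the boundary point $x_0$: handling uniformly the three boundary configurations (interior of $S$, one-sided boundary, isolated point) when choosing $h$, ensuring $R$ is still admissible-or-at-least a valid closed stopping region with $\rho_R$ related cleanly to $\rho_S$, and—most delicately—verifying that the error terms from the $\Lc V$ integral and from the discrepancy between $\delta(\tau_{B(x_0,h)})$ and $1$ are genuinely of lower order than the local-time term after dividing by $\mathbb{E}^{y}[\tau_{B(x_0,h)}]$. This is exactly what Lemma \ref{lm.hit.localh} is engineered to deliver (the extra factor $h$ in the numerator there is the crux), so the argument should go through, but the uniformity in $r\in(-1,1)$ must be used carefully so that the domination holds simultaneously for all $y\in B(x_0,h)$.
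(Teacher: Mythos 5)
Your core local-time computation (Step 1 of the paper's Case (i)) is correct and matches the paper's idea: Lemma \ref{lm.hit.localh} shows the local-time term dominates the $\Lc V$ error, giving $\E^y[V(\tau_{B(x_0,h)},X_{\tau_{B(x_0,h)}},S)]>f(y)$ for all $y\in B(x_0,h)$ once $h$ is small. However, the rest of your argument has a genuine gap at exactly the place you flag as ``bookkeeping'': when $x_0\in\partial S$, the identification
$$V(0,y,R)=\E^y\big[V(\tau_{B(x_0,h)},X_{\tau_{B(x_0,h)}},S)\big],\qquad R=S\setminus B(x_0,h),$$
is \emph{false}, and so is the claim that $J(\cdot,R)=J(\cdot,S)$ on $S^c$. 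Consider the paper's Case (i), where $(x_0,x_0+h_0)\subset S^\circ$ and $(x_0-h_0,x_0)\subset S^c$, so $S\cap B(x_0,h)=[x_0,x_0+h)$. On the event that the process exits $B(x_0,h)$ at $x_0-h\in S^c$ and later returns to hit $x_0$, the ``behave as under $S$'' rule stops at $x_0$ with value $f(x_0)$, whereas the policy $R$ does \emph{not} stop there (since $x_0\notin R$) and continues until hitting $R$ somewhere else. Thus $\rho_R$ strictly exceeds the stopping time implicit in $\E^y[V(\tau_{B(x_0,h)},X_{\tau_{B(x_0,h)}},S)]$ on this event, and there is no a-priori sign on the resulting difference. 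The same phenomenon breaks your claim $\rho_R=\rho_S$ on $S^c$: starting in $(x_0-h_0,x_0)$, policy $S$ stops at $x_0$ but policy $R$ sails past it.

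The paper closes exactly this gap with two additional arguments that you do not supply. In \emph{Step 2} of Case (i), it proves $J(y,S_h)\geq\E^y[V(\tau_{B(x_0,h)},X_{\tau_{B(x_0,h)}},S)]$ for $y\in[x_0,x_0+h)$ by a separate minimization/contradiction argument on the problematic event $A=\{X_{\tau_{B(x_0,h)}}=x_0-h,\ X_\nu=x_0,\ \nu<\infty\}$, using the log sub-additivity $\delta(\rho_{S_h})\geq\delta(\nu)\delta(\rho_{S_h}-\nu)$ and the strong Markov property. In \emph{Step 3}, it shows $J(y,S_h)\geq J(y,S)\geq f(y)$ for $y\in(l,x_0)\subset S^c$, again by conditioning on the event $\{X_{\rho_S}=x_0\}$ and applying decreasing impatience together with the already-established strict improvement at $x_0$. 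Your proposal needs analogous arguments (and the analogous but distinct treatment for the isolated-point case (\ref{boundary.b}), where the paper removes just $\{x_0\}$ and works on the full interval $(l,r)$ rather than $B(x_0,h)$). Without them, the claim that $R$ is a mild equilibrium dominating $S$ is not justified whenever $x_0\in\partial S$; only the interior case $x_0\in S^\circ$ (the paper's Case (iii)) goes through by the mechanism you describe, since there the hole is entirely surrounded by $S$ and the two exit behaviours do coincide.
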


\begin{proof}
Notice that Assumption \ref{assume.f.positive}(ii) and Lemma \ref{lm.v.c12} guarantees the existence of $V_x(t,x\pm,S)$ and $\Lc V(t,x\pm, S)$ for any $(t,x)\in [0,\infty)\times \X$. We prove the desired result by contradiction. Take $x_0\in S$ and suppose
\be\label{eq.prop.a} 
a:=V_x(0,x_0+,S)-V_x(0,x_0-,S)>0.
\ee 
Recall $\Gc$ defined in \eqref{eq.assume.G}. To reach to a contradiction, we will construct a new mild equilibrium, which is strictly better than $S$, for each of the three cases:
(i) $x_0\in\partial S$ for boundary case (\ref{boundary.a});
(ii) $x_0\in \partial S$ for boundary case (\ref{boundary.b});
(iii) $x_0\in S^\circ$.

{\bf Case (i)} $x_0\in\partial S$ for boundary case (\ref{boundary.a}).
Without loss of generality, we assume that $(x_0, x_0+h_0)\subset (S^\circ \cap \Gc)$ and $(x_0-h_0,x_0)\subset S^c$ for some $h_0>0$. Denote
$l:=\sup\{y\leq x_0-h_0: y\in S\},$
and note that $l$ can be $-\infty$. We proceed the proof for this case in three steps.

\textit{Step 1.} We show that there exists $h\in(0,h_0)$ such that,
\be\label{eq.V.h} 
\E^{y}\left[V(\tau_{B(x_0,h)}, X_{\tau_{B(x_0,h)}}, S)\right]-f(y)>0, \quad \forall y\in(x_0-h, x_0+h). 
\ee
Notice from Lemma \ref{lm.v.lrderiv} and \eqref{eq.prop.a} that
\be\label{eq.lr.a} 
V_x(t,x_0+,S)-V_x(t,x_0-,S)\geq \delta(t) (V_x(0,x_0+,S)-V(0,x_0-,S))=a\delta(t),\quad \forall t\geq 0.
\ee
Fix $h\in(0,h_0)$ and pick an arbitrary $x\in B(x_0, h)$. For all $n\in \N$, write
\begin{equation}\label{eq791}
\tau_n:= \tau_{B(x_0,h)}\wedge n
\end{equation}
for short. We apply Lemma \ref{lm.peskir.ito} to $V(t,X_t,S)$ on $[0, \tau_n]$ and take expectation; the diffusion term vanishes under expectation due to Lemma \ref{lm.v.c12} and continuity of $\sigma$. Then combining with \eqref{eq.lr.a}, we have that, 
\be\label{eq.bound.f1deriv'}
\begin{aligned}
	&\E^x\left[V\left(\tau_n, X_{\tau_n},S \right)\right]-V(0,x,S)\\
	&\geq \E^x\left[\int_0^{\tau_n} \frac{1}{2}(\Lc V(t, X_t-,S)+\Lc V(t, X_t+,S))dt \right]
	+a\E^x\left[\int_0^{\tau_n}\delta(t)dL^{x_0}_t\right].
\end{aligned}
\ee  
By Lemma \ref{lm.v.c12}(b), we have 
$$M:=\sup_{(t,y)\in [0,\infty)\times \overline{B(x_0,h_0)}}\frac{1}{2}\left( |\Lc V(t, y-,S)|+|\Lc V(t, y+,S)| \right)<\infty.$$
This together with \eqref{eq.bound.f1deriv'} implies that
\be\label{eq.bound.f1deriv} 
\begin{aligned}
\E^{x}[V(\tau_n, X_{\tau_n},S)]-V(0,{x},S)
\geq -M\E^{x}[\tau_n] +a\E^{x}\left[\int_0^{\tau_n}\delta(t)dL^{x_0}_t\right]\quad \forall n\in \N.
\end{aligned}
\ee 
For the LHS of \eqref{eq.bound.f1deriv}, \eqref{eq.assume.wellpose1} readily implies that
\be\label{eq.prop.4.1'} 
\lim_{n\rightarrow \infty}\E^{x_0}[V(\tau_n, X_{\tau_n},S)=\E^{x_0}[V(\tau_{B(x_0,h)},X_{\tau_{B(x_0,h)}},S)].
\ee
Indeed, set
$
\eta_n:=\inf\{t\geq \tau_n, X_t\in S\}$ for all $n\in \N$, and $\eta:=\inf\{t\geq \tau_{B(x_0,h)}, X_t\in S\}$. We have
$\E^{x}[V(\tau_n,X_{\tau_n},S)]=\E^{x}\left[\E^{x}\left[\delta(\eta_n)f(X_{\eta_n})|\mathcal{F}_{\tau_n}\right]\right]=\E^{x}[\delta(\eta_n)f(X_{\eta_n})]$ for all $n\in \N,$
and $\E^{x_0}[V(\tau_{B(x_0,h)},X_{\tau_{B(x_0,h)}},S)]=\E^{x_0}[\delta(\eta)f(X_\eta)]$. As $n\to\infty$, $\eta_n\to\eta$, $\P^x$-a.s.. 
Then by Assumption \ref{assume.f.positive}, we can apply the dominated convergence theorem to get
$
	 \lim_{n\rightarrow \infty}\E^{x_0}[\delta(\eta_n)f(X_{\eta_n})]
	=\E^{x_0}[\delta(\eta)f(X_\eta)],
$ i.e., \eqref{eq.prop.4.1'} holds. (Note that \eqref{eq.assume.sstar'} is used on $\{\eta=\infty\}$.)

Applying the monotone convergence theorem to the RHS of \eqref{eq.bound.f1deriv} and combining with \eqref{eq.prop.4.1'}, we have that
\be\label{e13'}	 
\begin{aligned}
	\E^{x}[V(\tau_{B(x_0, h)}, X_{B(x_0, h)},S)]-V(0,x,S)
\geq -M\E^{x}[\tau_{B(x_0, h)}] +a\E^{x}\left[\int_0^{\tau_{B(x_0, h)}}\delta(t)dL^{x_0}_t\right].
\end{aligned}
\ee 
By the arbitrariness of $x\in B(x_0,h)$, 
\be\label{e13}	 
\begin{aligned}
	&\E^{x_0+rh}[V(\tau_{B(x_0, h)}, X_{B(x_0, h)},S)]-V(0,{x_0+rh},S)\\
	&\geq -M\E^{x_0+rh}[\tau_{B(x_0, h)}] +a\E^{x_0+rh}\left[\int_0^{\tau_{B(x_0, h)}}\delta(t)dL^{x_0}_t\right],
	\quad \forall r\in (-1,1).
\end{aligned}
\ee
By Lemma \ref{lm.hit.localh} and $|\sigma(x_0)|>0$, we can choose the above $h$ small enough such that
$$\E^{x_0+rh}\left[\int_0^{\tau_{B(x_0,h)}}\delta(t)dL^{x_0}_t\right]\geq \left(\frac{M}{a}+1\right)\E^{x_0+r{h}}[\tau_{B(x_0,h)}], \quad\forall r\in(-1,1).$$
Consequently, \eqref{e13} leads to
$$
\E^{x_0+rh}[V(\tau_{B(x_0,h)}, X_{\tau_{B(x_0,h)}},S)]-V(0,{x_0+rh},S)
\geq a\E^{x_0+rh}[\tau_{B(x_0,h)}] >0,
\quad \forall r\in (-1,1),
$$
which gives \eqref{eq.V.h}.

\textit{Step 2.} 
In the rest part of Case (i), we take $h$ such that \eqref{eq.V.h} holds and write $S_h:= S\setminus B(x_0, h)$ for short. In this step, we prove by contradiction that
\be\label{eq.J.Vh}   
J(y, S_h)\geq  \E^{y}[V(\tau_{B(x_0,h)}, X_{\tau_{B(x_0,h)}}, S)], \quad \forall y\in[x_0,x_0+h).
\ee 
Suppose 
\be\label{eq.step2.contra} 
 \alpha:=\inf\limits_{y\in [x_0, x_0+h]} \Big(J(y,S_h)-\E^y[V(\tau_{B(x_0,h)}, X_{\tau_{B(x_0,h)}}, S)] \Big)<0.
\ee 
As $x_0+h\in S^\circ$, by Lemma \ref{lm.v.c12}(a),
$$J(x_0+h,S_h)=f(x_0+h)=\E^{x_0+h}[V(\tau_{B(x_0,h)}, X_{\tau_{B(x_0,h)}}, S)].$$
By the continuity of functions $y\mapsto J(y,S_h)$ and $y\mapsto\E^y[V(\tau_{B(x_0,h)}, X_{\tau_{B(x_0,h)}}, S)]$ on $[x_0,x_0+h]$, there exists $z^*\in[x_0,x_0+h)$ such that the infimum in \eqref{eq.step2.contra} is attained at $z^*$, i.e.,
\be\label{eq.contra.z} 
J(z^*,S_h)-\E^{z^*}[V(\tau_{B(x_0,h)}, X_{\tau_{B(x_0,h)}}, S)]=\alpha.
\ee 
Define 
$$\nu:=\inf\{t\geq\tau_{B(x_0,h)}:\,X_t\in S\}\quad\text{and }\quad A:=\{X_{\tau_{B(x_0,h)}}=x_0-h,\ X_\nu=x_0,\ \nu<\infty\}.$$
Notice that $\rho_{S_h}=\nu$, $\P^{z^*}$-a.s. on both sets $\{X_{\tau_{B(x_0,h)}}=x_0+h\}$ and $\{X_{\tau_{B(x_0,h)}}=x_0-h, X_\nu <x_0 \}$.
We have that
\be\notag
\begin{aligned}
	J({z^*}, S_h)-\E^{z^*}[V(\tau_{B(x_0,h)}, X_{\tau_{B(x_0,h)}}, S)] &= \E^{z^*}\left[1_A\cdot\big(\delta(\rho_{S_h})f(X_{\rho_{S_h}})-\delta(\nu) f(X_{\nu})\big)\right]\\
	&\geq \E^{z^*}\left[1_A\delta(\nu)\cdot\Big(\E^{z^*}[\delta(\rho_{S_h}-\nu) f(X_{\rho_{S_h}}) \mid \Fc_{\nu}]- f(X_{\nu}) \Big)\right]\\
&=\E^{z^*}[1_A\delta(\nu)]\cdot \big(J(x_0,S_h)- f(x_0) \big)\\
&>\E^{z^*}[1_A\delta(\nu)]\cdot \left(J(x_0,S_h)- \E^{x_0}[V(\tau_{B(x_0,h)}, X_{\tau_{B(x_0,h)}}, S)]\right)\\
&\geq\E^{z^*}[1_A\delta(\nu)]\cdot\alpha\\
&>\alpha,
\end{aligned}
\ee 
where the second (in)equality follows from \eqref{eq.assume.DI} and $f\geq 0$, the third (in)equality follows from the strong Markov property of $X$ and the fact that $X_v=x_0$ on $A$, the fourth (in)equality follows from \eqref{eq.V.h} with $y=x_0$, the fifth (in)equality follows from the definition of $\alpha$ in \eqref{eq.step2.contra}, and the last (in)equality follows from the fact that $\nu\geq\tau_{B(x_0,h)}>0$ and $\delta(t)<1$ for $t>0$. This contradicts \eqref{eq.contra.z}. Therefore, \eqref{eq.J.Vh} holds.

\textit{Step 3.} Now we prove that $S_h$ is a mild equilibrium and is strictly better than $S$. By \eqref{eq.V.h} and \eqref{eq.J.Vh} and noticing that $(x_0,x_0+h)\subset S^\circ$, we have
\be\label{eq.f.jyh}
J(y,S_h)>f(y)=J(y,S),\quad \forall y\in [x_0,x_0+h).
\ee
Then for any $y\in (l,x_0)$, we have that
\be\label{eq.casei.step3.0} 
\begin{aligned}
J(y,S_h)-J(y,S)&=\E^y\left[1_{\{X_{\rho_S}=x_0,\, \rho_S<\infty \}}\left(\delta(\rho_{S_h})f(X_{\rho_{S_h}})-\delta(\rho_S)f(x_0)\right)\right]\\
&\geq\E^y\left[1_{\{X_{\rho_S}=x_0,\, \rho_S<\infty \}}\delta(\rho_S)\left(\E^y\big[\delta(\rho_{S_h}-\rho_S)f(X_{\rho_{S_h}})\big|\mathcal{F}_{\rho_S}\big]-f(x_0)\right)\right]\\
&=\E^y\left[1_{\{X_{\rho_S}=x_0,\, \rho_S<\infty \}}\delta(\rho_S)\big(J(x_0,S_h)-f(x_0)\big)\right]\\
&\geq 0.
\end{aligned}
\ee 
where the second (in)equality follows again from \eqref{eq.assume.DI} and the non-negativity of $f$, the third (in)equality follows from the strong Markov property of $X$, and the last (in)equality follows from \eqref{eq.f.jyh} with $y=x_0$. As $S$ is a mild equilibrium, above inequality implies
\be\label{eq.casei.step3.1} 
J(y, S_h)\geq J(y,S)\geq f(y),\quad \forall y\in(l,x_0).
\ee 
This together with \eqref{eq.f.jyh} and the fact $J(\cdot,S_h)=J(\cdot,S)$ on $\X\setminus (l, x_0+h)$ implies that $S_h$ is a mild equilibrium and is strictly better than $S$.

{\bf Case (ii)} $x_0\in \partial S$ for boundary case (\ref{boundary.b}).
We denote
$$l:=\sup\{y<x_0, y\in S\},\quad r:=\inf\{y>x_0, y\in S\},\quad\text{and}\quad \tilde{\tau}_n:= \tau_{\left((l,r)\cap B(x_0, n)\right)}\wedge n\ \text{for }n\in\N.$$
By a similar discussion through \eqref{eq.lr.a}--\eqref{eq.bound.f1deriv'} (with Lemmas \ref{lm.peskir.ito} and \ref{lm.v.lrderiv} applied), we have that
\be\label{eq.V.peskir}
\begin{aligned}
&\E^{x_0}[V(\tilde{\tau}_n, X_{\tilde{\tau}_n},S)]-V(0,{x_0},S)\\
&\geq \E^{x_0}\left[\int_0^{\tilde{\tau}_n} \frac{1}{2}\left(\Lc V(s,X_{s}-,S)+\Lc V(s,X_{s}+,S)\right)ds\right]+a\E^{x_0}\left[\int_0^{\tilde{\tau}_n}\delta(t)dL^{x_0}_{t}\right].
\end{aligned}
\ee
By Lemma \ref{lm.v.c12}(a), 
$
\Lc V(t,x, S)=0$ for any $(t,x)\in [0,\infty)\times S^c,
$
and thus the first term on the RHS of \eqref{eq.V.peskir} vanishes for all $n\in \N$.
As a result, we can rewrite \eqref{eq.V.peskir} as
\bee\label{eq.prop.caseii} 
\E^{x_0}[V(\tilde{\tau}_n, X_{\tilde{\tau}_n},S)]-V(0,{x_0},S)\geq a\E^{x_0}\left[\int_0^{\tilde{\tau}_n}\delta(t)dL^{x_0}_{t}\right]\geq a\E^{x_0}\left[\int_0^{\tilde{\tau}_1}\delta(t)dL^{x_0}_{t}\right]>0,\quad \forall n\in\N.
\eee
Meanwhile, similar to \eqref{eq.prop.4.1'}, Assumption \ref{assume.f.positive} implies that $\E^{x_0}[V(\tilde{\tau}_n, X_{\tilde{\tau}_n},S)]\to \E^{x_0}[V(\tau_{(l,r)},X_{\tau_{(l,r)}},S )]$ as $n\to\infty$. This together with the above inequality implies that
\be\label{eq.prop.4.1}
\begin{aligned}
	&J(x_0,S\setminus\{x_0\})-f(x_0)=\E^{x_0}[V(\tau_{(l,r)},X_{\tau_{(l,r)}},S )]-V(0,{x_0},S)\geq a\E^{x_0}\left[\int_0^{\tilde{\tau}_1}\delta(t)dL^{x_0}_{t}\right]>0.
\end{aligned}  
\ee
Now set $\widetilde{S}:=S\setminus \{x_0\}$ and pick any $y\in (l,r)$. We can apply an argument similar to that in \eqref{eq.casei.step3.0}, by using \eqref{eq.prop.4.1} and replacing $S_h$ with $\widetilde{S}$, to reach that
$
J(y,\widetilde{S})-J(y,S)\geq 0
$. Hence, 
$
J(y,\widetilde{S})\geq f(y)$ for $y\in (l,r).
$
As $J(\cdot,\widetilde{S})=J(\cdot,S)$ on $\X\setminus(l,r)$, we have that $\widetilde{S}$ is a mild equilibrium. Due to \eqref{eq.prop.4.1}, $\widetilde{S}$ is strictly better than $S$.

{\bf Case (iii)} $x_0\in S^\circ$.
Choose $h_0>0$ such that $B(x_0, h_0)\setminus \{x_0\}\subset (\Gc\cap S^\circ)$. 
Following the argument in \textit{Step 1} of Case (i), we can again reach \eqref{eq.V.h} for some $0<h \leq h_0$, which indicates
$$
J(y,S\setminus B(x_0,h))>f(y),\quad \forall y\in B(x_0,h).
$$
As $J(\cdot,S\setminus B(x_0,h))=J(\cdot,S)$ on $\X\setminus B(x_0,h)$, we have that $S\setminus B(x_0,h)$ is a mild equilibrium and is strictly better than $S$.
\end{proof}

\begin{proof}[{\bf Proof of Theorem \ref{thm.optimalmild.weak}}]
Thanks to Lemma \ref{lm.v.c12}(a), Theorem \ref{prop.weak.cha} and Proposition \ref{prop.weak.optimalderiv}, we only need to show \eqref{eq.weak.equicha2} for $x\in S$. Recall $\Gc$ defined in \eqref{eq.assume.G}. Let $x_0\in S$ and we consider three cases: (i) $x_0\in (S^\circ\cap \Gc)$, (ii) $x_0=\theta_n\in S^\circ\setminus\Gc$ for some $n\in I$, and (iii) $x_0\in \partial S$.

{\bf Case (i)} $x_0\in (S^\circ\cap \Gc)$. We prove \eqref{eq.weak.equicha2} by contradiction. Suppose $\Lc V(0,x_0,S)=a>0$. By Assumption \ref{assume.f.positive}(ii), we can choose $h>0$ such that $V(t,x,S)=\delta(t)f(x)\in \Cc^{1,2}(B(x_0, h)\times(0,\infty))$ and
\be\label{eq.lv.sinner}   
		\Lc V(0,x, S)=\delta'(0)f(x)+\mu(x)f'(x)+\frac{1}{2}\sigma^2(x)f''(x)\geq \frac{a}{2},\quad\forall x\in B(x_0,h).
\ee
Then for any $(t,x)\in [0,\infty)\times B(x_0, h)$, we have that
\be\label{eq.lv.sinnsert}
	\begin{aligned}
		\Lc V(t,x,S)=&\delta'(t) f(x)+ \delta(t)\Big(\mu(x)f'(x)+\frac{1}{2}\sigma^2(x)f''(x)\Big)\\
		\geq &\delta(t)\Big(\delta'(0)f(x)+\mu(x)f'(x)+\frac{1}{2}\sigma^2(x)f''(x)\Big)
		\geq \delta(t)\frac{a}{2},
	\end{aligned}
\ee
where the first inequality above follows from Lemma \ref{lm.delta.inequ} and the non-negativity of $f$. 
Let us reuse the notation $\tau_n$ defined in \eqref{eq791}. By \eqref{eq.lv.sinnsert} and an argument similar to that for  \eqref{eq.bound.f1deriv'} and \eqref{eq.prop.4.1'} (notice that the local time integral in Lemma \ref{lm.peskir.ito} vanishes in the current case), we have that for any $x\in B(x_0,h)$,
\begin{align*}
	\begin{cases}
 \E^x[V(\tau_n, X_{\tau_n},S)]-V(0,x,S)
=\E^x\left[\int_0^{\tau_n} \Lc V(s,X_s,S)\right]
\geq \E^x\left[\int_0^{\tau_n} \delta(t)\frac{a}{2}dt\right]>0,\quad \forall n\in \N,\\
\E^x[V(\tau_{B(x_0,h)}, X_{\tau_{B(x_0,h)}},S)]=\lim_{n\to\infty} \E^x[V(\tau_n, X_{\tau_n},S)].
	\end{cases}
\end{align*}
This implies that
$$
\E^x[V(\tau_{B(x_0,h)}, X_{\tau_{B(x_0,h)}},S)-V(0,x,S)]\geq  \E^x\left[\int_0^{\tau_{B(x_0,h)}} \delta(t)\frac{a}{2}dt\right]>0,\quad \forall x\in B(x_0,h).
$$
Now consider $\widetilde{S}=S\setminus B(x_0,h)$. The above inequality implies
\be\label{eq.s'.hball}
\begin{aligned}
J(x, \widetilde{S})-f(x)=\E^x[V(\tau_{B(x_0,h)}, X_{\tau_{B(x_0,h)}},S)]-V(0,x,S)>0
\end{aligned}\quad \forall x\in B(x_0,h).
\ee
Obviously,
$J(\cdot, \widetilde{S})=J(\cdot,S)$ on $\X\setminus B(x_0,h).$
This together with \eqref{eq.s'.hball} shows that $\widetilde{S}$ is an equilibrium and is strictly better than $S$, a contradiction. Hence, $\Lc V(0,x_0,S)\leq 0$, as desired.

{\bf Case (ii)} $x_0=\theta_n\in S^\circ\setminus\Gc$ for some $n\in I$. Without loss of generality, we assume $\Lc V(0,x_0+,S)=a>0$. Then we can pick $h>0$ such that $(x_0, x_0+h)\subset S^\circ\cap (\theta_n, \theta_{n+1})$. By the continuity of $x\rightarrow \Lc V(0,x+,S)$ on $[x_0, x_0+h)$ (due to Assumptions \ref{assume.x.elliptic}(i), \ref{assume.f.positive}(ii) and the fact that $V (t, x, S)=\delta(t)f(x)$ for $x\in S$), we can find $0<\tilde{h}<h$ such that $\Lc V(0,y, S)\geq a/2>0$ for all $y\in (x_0, x_0+\tilde{h})$. Set $\tilde{x}:=(2x_0+\tilde{h})/2$. Then $B(\tilde{x}, \tilde{h}/4)\subset (x_0, x_0+\tilde{h})\subset S^\circ\cap \Gc$, and a contradiction can be reached by the same argument as in Case (i).

{\bf Case (iii)} $x_0\in \partial S$.  For boundary case (\ref{boundary.a}), suppose again that
$\Lc V(0,x_0-,S)\vee \Lc V(0,x_0+,S)>0.$
Without loss of generality, we assume $(x_0, x_0+h_0)\subset (S^\circ\cap \Gc)$ and $(x_0-h_0, x_0)\subset S^c$ for some $h_0>0$. By Lemma \ref{lm.v.c12}(a), $\Lc V(0,x-,S)\equiv 0$ on $(x_0-h_0, x_0]$, and therefore,
$
\Lc V(0,x_0+,S)>0.
$
Then the same argument as in Case (ii) can be applied to get a contradiction. 

For boundary case (\ref{boundary.b}), Lemma \ref{lm.v.c12}(a) directly tells that
$\Lc V(0,x-,S)\vee \Lc V(0,x+,S)=0,$
and the proof is complete.
\end{proof}

\section{When Weak or Optimal Mild Equilibria are Strong}\label{sec:strong.optimal}
After establishing the relation between optimal mild and weak equilibria, we take a further step to study whether a weak or optimal mild equilibrium is strong. 

	We already know that an admissible weak or optimal mild equilibrium $S$ satisfies the two conditions \eqref{eq.weak.equicha1} and \eqref{eq.weak.equicha2} in Theorem \ref{prop.weak.cha}. To make $S$ a strong equilibrium, the first order condition \eqref{e7} needs to be upgraded to the local maximum condition \eqref{e8}. Recall the discussion at the beginning of Section \ref{subsec:3.1}. Intuitively, a sufficient condition for \eqref{e8} is the LHS of \eqref{eq.explain.31.2} being negative for all $\eps$ small enough. As a result, if at least one of the two inequalities \eqref{eq.weak.equicha1} and \eqref{eq.weak.equicha2} is strict for all the points in the weak or optimal equilibrium $S$, then $S$ should also be strong. To this end, let us define for any admissible $S\in \Bc$, 
	\be\label{eq.define.Sk}
	\begin{aligned}
		\mathfrak{S}_S:=&\{x\in S: \Lc V(0,x-,S \wedge \Lc V(0,x+,S)<0\}\cup\{x\in S: V_x(0,x-,S)>V_x(0,x+,S)\}.
	\end{aligned}
	\ee
Theorem \ref{thm.weak.strong} and Theorem \ref{thm.strong.optimal} are the main results of this section, and their proofs are provided in the next subsection. The first main result concerns when a weak equilibrium is strong.

\begin{Theorem}\label{thm.weak.strong}
Let Assumptions \ref{assume.x.elliptic}--\ref{assume.new} hold and $S$ be an admissible weak equilibrium. If $S=\mathfrak{S}_S$, then $S$ is also strong.
\end{Theorem}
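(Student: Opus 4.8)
The plan is to verify the local‑maximum condition \eqref{e8} at every $x\in S$, i.e. to show that $\E^x[V(\varepsilon,X_\varepsilon,S)]-V(0,x,S)\le 0$ for all $\varepsilon$ small enough (recall $V(0,x,S)=f(x)$ on $S$ and the reformulation \eqref{eq.explain.31.0}). Since $S$ is an admissible weak equilibrium it already satisfies \eqref{eq.weak.equicha1} and \eqref{eq.weak.equicha2} by Theorem \ref{prop.weak.cha}. Fix $x\in S$ and choose $h>0$ so small that $\overline{B(x,h)}\subset\X$ and $V$ is $\Cc^{1,2}$ on $[0,\infty)\times(x-h,x]$ and on $[0,\infty)\times[x,x+h)$ (possible by admissibility of $S$, Lemma \ref{lm.v.c12}(a) and Assumption \ref{assume.f.positive}(ii)). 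Then, exactly as in the derivation of \eqref{eq.ito.exptanaka} (Lemma \ref{lm.peskir.ito}, Lemma \ref{lm.varepsilon.h}, and the boundedness of the stochastic integrand on $[0,1]\times\overline{B(x,h)}$),
\begin{align*}
\E^x[V(\varepsilon,X_\varepsilon,S)]-V(0,x,S)=&\ \E^x\Big[\int_0^{\varepsilon\wedge\tau_{B(x,h)}}\tfrac12\big(\Lc V(s,X_s-,S)+\Lc V(s,X_s+,S)\big)\,ds\Big]\\
&+\E^x\Big[\tfrac12\int_0^{\varepsilon\wedge\tau_{B(x,h)}}\big(V_x(s,x+,S)-V_x(s,x-,S)\big)\,dL^x_s\Big]+o(\varepsilon).
\end{align*}
Because $S=\mathfrak{S}_S$, the point $x$ lies in one of the two sets in \eqref{eq.define.Sk}; combined with \eqref{eq.weak.equicha1} this leaves exactly two cases.

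\emph{Case A: $V_x(0,x+,S)<V_x(0,x-,S)$.} By \eqref{eq.vx.lineart}, $V_x(s,x+,S)-V_x(s,x-,S)\le\tfrac12\big(V_x(0,x+,S)-V_x(0,x-,S)\big)<0$ for $s\le s_0$, so for $\varepsilon\le s_0$ the local‑time term is at most $\tfrac14\big(V_x(0,x+,S)-V_x(0,x-,S)\big)\,\E^x[L^x_{\varepsilon\wedge\tau_{B(x,h)}}]$. The first integral is $O(\varepsilon)$ by Lemma \ref{lm.v.c12}(b), and $\E^x[L^x_{\varepsilon\wedge\tau_{B(x,h)}}]=\sqrt{2/\pi}\,|\sigma(x)|\sqrt\varepsilon\,(1+o(1))$ with $|\sigma(x)|>0$ by Lemma \ref{lm.local.time}. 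Hence the whole expression is bounded above by a strictly negative constant times $\sqrt\varepsilon$ plus $o(\sqrt\varepsilon)$, so it is $\le 0$ for $\varepsilon$ small, giving \eqref{e8} at $x$.

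\emph{Case B: $V_x(0,x+,S)=V_x(0,x-,S)$ and $\Lc V(0,x-,S)\wedge\Lc V(0,x+,S)<0$; without loss of generality $\Lc V(0,x+,S)<0$.} Then $V_x(s,x+,S)-V_x(s,x-,S)=o(\sqrt s)$ by \eqref{eq.vx.lineart}, so exactly as in \eqref{eq.weak.local} the local‑time term is $o(\sqrt\varepsilon)\cdot O(\sqrt\varepsilon)=o(\varepsilon)$. For the first integral, note that $\Lc V(0,x+,S)<0$ rules out (using $\Lc V\equiv 0$ on $S^c$ and $V\in\Cc^{1,2}([0,\infty)\times\overline{S^c})$ from Lemma \ref{lm.v.c12}(a)) every admissible local configuration of $x$ except $(x,x+h')\subset S^\circ$ for some $h'>0$; shrinking, $(x,x+h')\subset S^\circ\cap\Gc$, where $V=\delta f$ is jointly smooth, so $(s,y)\mapsto\Lc V(s,y,S)$ is continuous up to $s=0$ and equals $\Lc V(0,x+,S)$ at $(0,x)$. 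Thus we may shrink $h'$ and pick $\delta>0$ with $\Lc V(s,y\pm,S)\le\tfrac12\Lc V(0,x+,S)=:-2c_0<0$ on $[0,\delta]\times[x,x+h']$. On the other side, $(x-h'',x)$ lies in $S^c$ (where $\Lc V\equiv 0$) or in $S^\circ\cap\Gc$ (where $\Lc V$ is continuous and, by \eqref{eq.weak.equicha2}, $\Lc V(0,\cdot,S)\le 0$), so after shrinking $\Lc V(s,y\pm,S)\le c_0$ on $[0,\delta]\times[x-h'',x]$. Taking $h=h'\wedge h''$ in the decomposition above and $\varepsilon\le\delta$, and using that $\{s\le\varepsilon\wedge\tau_{B(x,h)}:X_s=x\}$ has zero Lebesgue measure a.s., the first integral is bounded by
\[
-2c_0\,\E^x\Big[\int_0^{\varepsilon\wedge\tau_{B(x,h)}}1_{\{X_s>x\}}\,ds\Big]+c_0\,\E^x\Big[\int_0^{\varepsilon\wedge\tau_{B(x,h)}}1_{\{X_s<x\}}\,ds\Big]\le -\tfrac12 c_0\,\varepsilon+o(\varepsilon),
\]
since $\P^x(X_s>x)\to\tfrac12$ and $\P^x(X_s<x)\to\tfrac12$ as $s\searrow 0$ (comparing $X$ with $\widetilde X$ as in the proof of Lemma \ref{lm.local.time}) and $\P^x(\tau_{B(x,h)}\le\varepsilon)=o(\varepsilon)$ by Lemma \ref{lm.step.1}. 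Combined with the $o(\varepsilon)$ local‑time term and the $o(\varepsilon)$ error, $\E^x[V(\varepsilon,X_\varepsilon,S)]-V(0,x,S)\le -\tfrac12 c_0\varepsilon+o(\varepsilon)\le 0$ for $\varepsilon$ small, giving \eqref{e8} at $x$.

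The ingredients that are routine here are precisely those already packaged in Section \ref{sec:weak.chara}: the expansion above, the order $O(\sqrt\varepsilon)$ of $\E^x[L^x_{\varepsilon\wedge\tau_{B(x,h)}}]$, and the time‑regularity \eqref{eq.vx.lineart}. The one genuinely new point, and the main obstacle, is Case B: when the strict inequality in $\mathfrak{S}_S$ comes from $\Lc V$ rather than from a downward jump of $V_x$, the strictness is typically only one‑sided (e.g. at a boundary point where smooth fit holds, $\Lc V(0,x-,S)=0$ while $\Lc V(0,x+,S)<0$), so one cannot bound $\Lc V(s,X_s\pm,S)$ by a negative constant along the whole path. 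Instead one must split the occupation time according to which side of $x$ the path lies on, use that each side carries $\tfrac\varepsilon2+o(\varepsilon)$ of the time, and control the non‑strict side by continuity of $\Lc V$ up to $s=0$ together with the global inequality $\Lc V(0,\cdot,S)\le 0$ from \eqref{eq.weak.equicha2}; care must also be taken in choosing $B(x,h)$ small enough that $f$ (hence $V$ on $S^\circ$) is $\Cc^2$ there and the above sign bounds hold uniformly for $s\le\delta$.
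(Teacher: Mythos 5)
Your proof is correct and follows essentially the same route as the paper: split into the case where the jump in $V_x$ at $x$ is strictly negative (dominated by the local-time term, Lemma \ref{lm.local.time}) and the case where smooth fit holds at $x$ but $\Lc V(0,x-,S)\wedge\Lc V(0,x+,S)<0$, which is handled by decomposing the occupation-time integral across the two sides of $x$ and invoking $\P^x(X_s>x)\to\tfrac12$ (this is precisely Lemma \ref{lm.1/2} in the paper, so you may cite it directly rather than re-deriving it from the proof of Lemma \ref{lm.local.time}). The only cosmetic differences are the choice of auxiliary constants in Case B and the use of \eqref{eq.vx.lineart} where the paper uses plain continuity in $t$ in Case A; both are valid.
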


The next result regards the relation between optimal mild and strong equilibria.

\begin{Theorem}\label{thm.strong.optimal}
Let Assumptions \ref{assume.x.elliptic}--\ref{assume.new} hold. 
\bi 
\item[(a)] For any admissible optimal mild equilibrium $S$,  if $\Sk_S$ is admissible and closed, then $\Sk_S$ is a strong equilibrium.
\item[(b)] Recall $S^*$ defined in \eqref{eq.nota.sstar}. We have $\overline{\Sk_{S^*}}=S^*$. Hence, if $\Sk_{S^*}$ is closed and admissible, then $S^*$ is a strong equilibrium.
 \ei
\end{Theorem}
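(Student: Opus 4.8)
The plan is to reduce both parts to Theorem~\ref{thm.weak.strong}. For (a), writing $R:=\Sk_S$, it suffices to show that $R$ is again an optimal mild equilibrium (hence weak, by Theorem~\ref{thm.optimalmild.weak} together with the admissibility hypothesis) and that $\Sk_R=R$; Theorem~\ref{thm.weak.strong} then gives that $R=\Sk_S$ is strong. For (b), $\overline{\Sk_{S^*}}\subseteq S^*$ is immediate since $\Sk_{S^*}\subseteq S^*$ and $S^*$ is closed, and the reverse inclusion will come from the minimality $S^*=\cap_{S\in\Ec}S$ (Lemma~\ref{lm.optimal.mild}); once $\overline{\Sk_{S^*}}=S^*$ is known, the extra hypothesis that $\Sk_{S^*}$ is closed forces $\Sk_{S^*}=\overline{\Sk_{S^*}}=S^*$, so $S^*=\Sk_{S^*}$ is an admissible weak equilibrium (weak by Theorem~\ref{thm.optimalmild.weak}, as $S^*$ is optimal mild by Lemma~\ref{lm.optimal.mild}) with $S^*=\Sk_{S^*}$, and Theorem~\ref{thm.weak.strong} applies.

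The core of (a) is a ``harmless removal'' fact: excising $S\setminus R$ from $S$ changes neither the value nor the mild-equilibrium property. Since $S$ is optimal mild it is weak (Theorem~\ref{thm.optimalmild.weak}), so Theorem~\ref{prop.weak.cha} forces, for every $x\in S\setminus R$, both $\Lc V(0,x-,S)=\Lc V(0,x+,S)=0$ and $V_x(0,x-,S)=V_x(0,x+,S)$: both inequalities \eqref{eq.weak.equicha1}--\eqref{eq.weak.equicha2} are tight on the ``inessential'' set $S\setminus R$. To use this I would run the ``digging a hole'' argument of Proposition~\ref{prop.weak.optimalderiv} in the value-preserving regime: for a ball $B\subset\X$ with $\overline B\cap R=\emptyset$, apply the local-time It\^o formula (Lemma~\ref{lm.peskir.ito}) to $V(\cdot,X_\cdot,S)$ on $[0,\tau_B\wedge n]$, take $\E^y$, let $n\to\infty$ as in \eqref{eq.prop.4.1'} via Assumption~\ref{assume.f.positive}, and control the $\Lc V$-integral and the local-time integral with Lemmas~\ref{lm.v.c12}(b), \ref{lm.hit.localh} and \ref{lm.local.time}; the tightness of \eqref{eq.weak.equicha1}--\eqref{eq.weak.equicha2} on $S\setminus R$ kills the leading terms and yields $\E^y[V(\tau_B,X_{\tau_B},S)]=f(y)$ for all $y\in B$. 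Exhausting $S\setminus R$ by such balls (using that, $R$ being admissible, $S\setminus R$ is a countable union of intervals and isolated points, plus a monotone-limit argument) gives $J(\cdot,R)=J(\cdot,S)$. Hence $J(\cdot,R)\ge f$ off $R$ (as $S$ is mild) and $J(x,R)=f(x)$ for $x\in R$ (as $R$ is closed, $\rho_R=0$ $\Pas$ when $X_0\in R$ by Remark~\ref{rm.close.regular}), so $R$ is a mild equilibrium; being as good as the optimal $S$, it is itself optimal mild.

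Next I would verify $\Sk_R=R$; then Theorem~\ref{thm.weak.strong} finishes (a). The inclusion $\Sk_R\subseteq R$ is definitional. Conversely take $x\in R=\Sk_S$. If $V_x(0,x-,S)>V_x(0,x+,S)$, then $V(0,\cdot,R)=V(0,\cdot,S)$ (from $J(\cdot,R)=J(\cdot,S)$) gives the same strict inequality for $R$, so $x\in\Sk_R$. Otherwise $\Lc V(0,x-,S)<0$ or $\Lc V(0,x+,S)<0$, say the former; then a left neighbourhood of $x$ must lie in $S^\circ$ (otherwise Lemma~\ref{lm.v.c12}(a) gives $\Lc V(0,x-,S)=0$), and since $x\in R$ with $R$ closed and admissible this neighbourhood must in fact lie in $R^\circ$ --- were it in $R^c$ it would sit in $S^\circ\setminus\Sk_S$, on which $\Lc V(0,\cdot,S)\equiv0$, forcing $\Lc V(0,x-,S)=0$ by continuity, a contradiction. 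Therefore $\Lc V(0,x-,R)=\delta'(0)f(x)+\mu(x)f'(x-)+\frac12\sigma^2(x)f''(x-)=\Lc V(0,x-,S)<0$, so $x\in\Sk_R$. Thus $\Sk_R=R$.

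For (b), suppose $\overline{\Sk_{S^*}}\subsetneq S^*$; pick $x_0\in S^*\setminus\overline{\Sk_{S^*}}$ and a ball $B=B(x_0,h)$ with $\overline B\subset\X$ and $\overline B\cap\overline{\Sk_{S^*}}=\emptyset$, so $\overline B\cap S^*\subset S^*\setminus\Sk_{S^*}$ is inessential. Applying the harmless-removal computation above to $S^*$ (weak by Theorem~\ref{thm.optimalmild.weak}) gives $J(\cdot,S^*\setminus B)=J(\cdot,S^*)$; since $S^*\setminus B$ is closed, $J=f$ on it, and off it $J(\cdot,S^*\setminus B)=J(\cdot,S^*)\ge f$ (because $S^*$ is mild and $J(\cdot,S^*)=f$ on $B\cap S^*\subset S^*$). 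So $S^*\setminus B$ is a mild equilibrium strictly smaller than $S^*$, contradicting $S^*=\cap_{S\in\Ec}S$; hence $\overline{\Sk_{S^*}}=S^*$, and the reduction in the first paragraph concludes (b). The step I expect to be hardest is the harmless removal itself: proving $J(\cdot,R)=J(\cdot,S)$ together with the accompanying inequality $J(\cdot,R)\ge f$ off $R$ is somewhat self-referential (value preservation and the supersolution inequality have to be bootstrapped together over the connected components of $S\setminus R$), and it requires carrying the delicate small-time local-time estimates of Section~\ref{sec:weak.chara} through in the presence of possible jumps of $V_x$ at interior points of the excised region; the exhaustion/monotone-limit bookkeeping also relies essentially on the admissibility of $R$ and on Assumption~\ref{assume.f.positive}.
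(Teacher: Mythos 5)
Your proposal follows essentially the same route as the paper: reduce both parts to Theorem~\ref{thm.weak.strong} via a ``harmless removal'' fact showing $\Sk_S$ (equivalently $\overline{\Sk_S}$) is still optimal mild, then verify $\Sk_{\Sk_S}=\Sk_S$; part (b) uses the minimality of $S^*$. The paper packages the removal fact as Proposition~\ref{prop.inner.S}, but the underlying mechanism is identical.

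One imprecision worth flagging: in your removal computation you write that the tightness of \eqref{eq.weak.equicha1}--\eqref{eq.weak.equicha2} on $S\setminus\Sk_S$ ``kills the leading terms and yields $\E^y[V(\tau_B,X_{\tau_B},S)]=f(y)$,'' but tightness holds only at $t=0$. For $t>0$ the integrands in the It\^o--Tanaka expansion are merely \emph{nonnegative} (by Lemma~\ref{lm.v.lrderiv} and the decreasing-impatience computation as in \eqref{eq.lv.sinnsert}--type arguments), so the local-time formula delivers $\E^y[V(\tau_B,X_{\tau_B},S)]\ge f(y)$, not equality; equality must then be squeezed from the optimality of $S$, as the paper does (and as you implicitly do a sentence later). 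A related consequence is that the quantitative estimates you cite --- Lemma~\ref{lm.hit.localh} and Lemma~\ref{lm.local.time} --- are not needed in the removal step, since the sign of the integrands suffices; those lemmas are what Section~\ref{sec:ws.optimal} uses to produce a \emph{strictly} better equilibrium when tightness fails. Finally, the ``exhaustion by balls'' needs the structural bookkeeping in Step~1 of Proposition~\ref{prop.inner.S} (the intervals $(l_k,r_k)$ are chosen so that removing one does not affect $J$ on the others); your sketch correctly anticipates this but leaves it implicit.
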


\begin{Remark}\label{rm.sk.best}
Theorem \ref{thm.strong.optimal} indicates that $S^*$ and $\Sk_{S^*}$ are almost the same, and roughly speaking, $S^*$ is a strong equilibrium possibly except some points in $\overline{\Sk_{S^*}}\setminus \Sk_{S^*}$. In many cases we indeed have $S^*=\Sk_{S^*}$, as a result of which $S^*$ is strong. This is demonstrated in all the examples in Section~\ref{sec:eg}.
\end{Remark}

\begin{Remark}
Suppose Assumptions \ref{assume.x.elliptic}--\ref{assume.new} hold and $S^*$ is admissible. Then $S^*$ cannot contain an isolated point at which $f$ is continuously differentiable. Indeed, suppose $x$ is an isolated point of $S^*=\overline\Sk_{S^*}$ and $f$ is smooth at $x$. Then $x\in\Sk_{S^*}$. On the other hand, since $\mathcal{L}(0,x-,S^*)=\mathcal{L}(0,x+,S^*)=0$ by Lemma \ref{lm.v.c12}(a), and $V_x(0,x-,S^*)=V_x(0,x+,S)$ by Corollary \ref{cor.boundary.smoothfit}, we would have $x\notin\Sk_{S^*}$, a contradiction. 
\end{Remark}

\subsection{Proofs of Theorems \ref{thm.weak.strong} and \ref{thm.strong.optimal}}

As discussed above, we aim to achieve the negativity in the RHS of \eqref{eq.explain.31.2} for $\eps$ small enough; when $V_x(s,x+,S)-V_x(s,x-,S)=0$, the integral on $\frac{1}{2}(\Lc V(s,X_{s}-,S)+\Lc V(s, X_s+,S))$ on the RHS of \eqref{eq.explain.31.2} should be negative. Since only one of the two values $\Lc V(s,X_{s}\pm,S)$ is required to be negative in the definition of $\Sk_S$, we will estimate the probability that $X$ goes to the left/right from the starting point. Such probability estimation is provided in the following lemma.

\begin{Lemma}\label{lm.1/2}
Let Assumption \ref{assume.x.elliptic} hold. Then
\be\label{eq.lm.5.1} 
\lim_{t\searrow 0} \P^{x_0}(X_t>x_0)=\lim_{t\searrow 0} \P^{x_0}(X_t<x_0)=\frac{1}{2},\quad \forall x_0\in \X.
\ee 
\end{Lemma}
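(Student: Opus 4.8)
The plan is to compare $X$ with the frozen-coefficient process $\widetilde X_t:=x_0+\mu(x_0)t+\sigma(x_0)W_t$ from Lemma \ref{lm.step.2} and to exploit Brownian scaling $W_t\overset{d}{=}\sqrt{t}\,W_1$. Writing $\bar X_t:=X_t-\widetilde X_t$, Lemma \ref{lm.step.2} gives $\E^{x_0}[|\bar X_t|]=O(t)$ as $t\searrow 0$, so for every fixed $\eta>0$ Markov's inequality yields
\[
\P^{x_0}\big(|\bar X_t|>\eta\sqrt{t}\big)\ \le\ \frac{\E^{x_0}[|\bar X_t|]}{\eta\sqrt{t}}\ =\ \frac{O(t)}{\eta\sqrt{t}}\ \xrightarrow[t\searrow 0]{}\ 0 .
\]
Moreover, since $\sigma(x_0)\neq 0$ by Assumption \ref{assume.x.elliptic}(ii), scaling gives, for every fixed $\eta\in\R$,
\[
\P^{x_0}\big(\widetilde X_t-x_0>\eta\sqrt{t}\big)=\P\big(\sigma(x_0)W_1>\eta-\mu(x_0)\sqrt{t}\big)\ \xrightarrow[t\searrow 0]{}\ \P\big(\sigma(x_0)W_1>\eta\big),
\]
the limit being valid because $\sigma(x_0)W_1$ is a nondegenerate Gaussian and hence has a continuous distribution function.

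Next I would sandwich the target probability. On the event $\{\widetilde X_t-x_0>\eta\sqrt{t}\}\cap\{|\bar X_t|\le\eta\sqrt{t}\}$ one has $X_t>x_0$, while $\{X_t>x_0\}\subseteq\{\widetilde X_t-x_0>-\eta\sqrt{t}\}\cup\{|\bar X_t|>\eta\sqrt{t}\}$; therefore
\[
\P^{x_0}\big(\widetilde X_t-x_0>\eta\sqrt{t}\big)-\P^{x_0}\big(|\bar X_t|>\eta\sqrt{t}\big)\ \le\ \P^{x_0}(X_t>x_0)\ \le\ \P^{x_0}\big(\widetilde X_t-x_0>-\eta\sqrt{t}\big)+\P^{x_0}\big(|\bar X_t|>\eta\sqrt{t}\big).
\]
Letting $t\searrow 0$ with $\eta$ fixed and invoking the two limits above gives $\P(\sigma(x_0)W_1>\eta)\le\liminf_{t\searrow 0}\P^{x_0}(X_t>x_0)\le\limsup_{t\searrow 0}\P^{x_0}(X_t>x_0)\le\P(\sigma(x_0)W_1>-\eta)$; finally letting $\eta\searrow 0$ and using $\P(\sigma(x_0)W_1>0)=\tfrac12$ forces both the $\liminf$ and the $\limsup$ to equal $\tfrac12$. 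The identical argument with ``$>$'' replaced by ``$<$'' yields $\lim_{t\searrow 0}\P^{x_0}(X_t<x_0)=\tfrac12$ as well (equivalently, the two limits sum to at most $1$, so that $\P^{x_0}(X_t=x_0)\to 0$).

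I do not anticipate a genuine obstacle; the one point needing care is that $\bar X_t$ must be shown negligible on the Brownian scale $\sqrt{t}$ rather than merely in absolute value, and this is exactly what the estimate $\E^{x_0}[|\bar X_t|]=O(t)=o(\sqrt{t})$ from Lemma \ref{lm.step.2} provides through Markov's inequality.
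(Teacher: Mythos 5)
Your argument is correct and follows essentially the same route as the paper's: decompose $X_t$ around the frozen-coefficient linearization via Lemma~\ref{lm.step.2}, use Markov's inequality to show the error term is negligible on the $\sqrt{t}$ Brownian scale, and invoke the continuity of the Gaussian distribution function. The only cosmetic difference is that you work with a fixed $\eta$ and let $\eta\searrow 0$ afterwards (and sandwich from both sides), whereas the paper picks the $t$-dependent threshold $\eps^{3/4}$ and only establishes $\liminf_{t\searrow 0}\P^{x_0}(X_t>x_0)\ge\tfrac12$ together with the symmetric bound for $\{X_t<x_0\}$, letting the two lower bounds squeeze against the total probability $1$.
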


\begin{proof}
Let $X_0=x_0\in\X$. Recall $\widetilde{X}$ and $\bar X$ defined in \eqref{e11}. Denote $R_\eps:=\mu(x_0)\eps+\bar X_\eps$. Then 
\be\label{eq.lm5.1} 
X_\eps = x_0+R_\eps+\sigma(x_0)W_\eps.
\ee  
By Lemma \ref{lm.step.2}, there exists some constant $C>0$ such that for any $\eps>0$ small enough, 
$
\E^{x_0}[|R_\varepsilon|]\leq C \varepsilon,
$
which leads to
\be\label{eq.lm5.1'}
\P^{x_0}\left(|R_\varepsilon|\geq\frac{1}{2}\varepsilon^{3/4}\right)\leq \frac{2\E^{x_0}[|R_\varepsilon|]}{\varepsilon^{3/4}}\leq 2C\cdot \varepsilon^{1/4}.
\ee 
By \eqref{eq.lm5.1} and \eqref{eq.lm5.1'}, for $\varepsilon>0$ small enough,
\be\notag
\begin{aligned}
&\P^{x_0}(X_\varepsilon>x_0)\geq \P^{x_0} \left(\sigma(x_0) W_{\varepsilon} >\varepsilon^{3/4}, R_\varepsilon >-\frac{1}{2} \varepsilon^{3/4}\right)\\
&\geq \P^{x_0} \left(\sigma(x_0) W_{\varepsilon} >\varepsilon^{3/4}\right)- \P^{x_0}\left(R_\varepsilon \leq-\frac{1}{2} \varepsilon^{3/4}\right)
\geq 1-\Phi\left(\frac{\varepsilon^{3/4}}{\sigma(x_0)\sqrt{\varepsilon}}\right)-\P^{x_0}\left(|R_\varepsilon|\geq\frac{1}{2}\varepsilon^{3/4}\right)\\
& \geq  1-\Phi\left(\frac{\varepsilon^{1/4}}{\sigma(x_0)}\right)-2C\eps^{1/4}\rightarrow 1-\Phi(0)-0=\frac{1}{2}, \quad \text{as}\; \eps \searrow 0,
\end{aligned}
\ee 
where $\Phi$ is the cumulative distribution function for the standard normal distribution. Therefore,
$
\liminf_{t\searrow 0} \P^{x_0}(X_t>x_0)\geq\frac{1}{2}.
$
Similarly,
$
\liminf_{t\searrow 0} \P^{x_0}(X_t<x_0)\geq\frac{1}{2}.
$
Thus, \eqref{eq.lm.5.1} holds.
\end{proof}

Now we are ready to prove Theorem \ref{thm.weak.strong}.
\begin{proof}[{\bf Proof of Theorem \ref{thm.weak.strong}}]
To prove the desired result, we need to verify that for any $x_0\in S$,
\be\label{eq.strong.x0} 
\exists	\varepsilon(x_0)>0, \text{ s.t. } \forall \varepsilon'\leq \varepsilon(x_0), f(x_0)-\E^{x_0}[\delta(\rho^{\varepsilon'}_S) f(X_{\rho^{\varepsilon'}_S})] \geq 0.
\ee
Since $S$ is a weak equilibrium, by Theorem \ref{prop.weak.cha},
$$V_x(0,x_0-,S)-V_x(0,x_0+,S)\geq0,\quad\forall x_0\in S.$$
Recall \eqref{eq.define.Sk} and $\Gc$ defined in \eqref{eq.assume.G}. Pick $x_0\in \Sk_S$, and we shall verify \eqref{eq.strong.x0} for two cases: (i) $V_x(0,x_0-,S)-V_x(0,x_0+,S)>0$, and (ii) $V_x(0,x_0-,S)-V_x(0,x_0+,S)=0$.

{\bf Case (i)} Suppose $a:=V_x(0,x_0-,S)-V_x(0,x_0+,S)>0$. By the continuity of $t\mapsto V_x(t,x_0\pm,S)$, we take $\eps>0$ small enough such that $\delta(t)>\frac{1}{2}$ for all $t\in(0,\eps)$, and 
\be\label{eq.thm.i1} 
V_x(t,x_0+,S)-V_x(t,x_0-,S)<-\frac{a}{2},\quad \forall\,t\in(0,\eps).
\ee
Let $h>0$ such that both $(x_0-h,x_0)$ and $(x_0,x_0+h)$ belong to $\Gc$. Then for $\varepsilon$ small enough,
\be\label{eq.isolate.sk}
\begin{aligned}
	&\E^{x_0}[V(\varepsilon, X_\varepsilon, S)]-V(0,x_0, S)+o(\varepsilon)=\E^{x_0}[V(\varepsilon\wedge \tau_{B(x_0,h)},X_{ \varepsilon\wedge \tau_{B(x_0,h)}},S)]-V(0,x_0, S)\\
	&\leq\E^{x_0}\left[\int_0^{\eps\wedge\tau_{B(x_0,h)}} \frac{1}{2}(\Lc V(s,X_s-,S)+\Lc V(s,X_s+, S))ds\right]-\frac{a}{4}\E^{x_0}[L^{x_0}_{\tau_{B(x_0,h)}\wedge \varepsilon}],
\end{aligned}
\ee
where the first (in)equality follows from Lemma \ref{lm.varepsilon.h}, the second (in)equality follows from Lemma \ref{lm.peskir.ito} and \eqref{eq.thm.i1} (the diffusion term vanishes after taking expectation due to the boundedness of $V_x\sigma$ on $[0,\eps]\times \overline{B(x_0,h)}$).
By Lemma \ref{lm.v.c12}(b), there exists a constant $K>0$ such that
$$\sup_{(t,y)\in [0,1]\times \overline{B(x_0,h)}}\frac{1}{2}|\Lc V(t,y+, S)+\Lc V(t,y-, S)|\leq K,$$ 
Then by Lemma \ref{lm.local.time} and $|\sigma(x_0)|>0$, we can take $\eps_0\in(0,1)$ such that for any $\varepsilon\in(0,\eps_0)$, $\frac{a}{4\varepsilon}\E^{x_0}[L^{x_0}_{\tau_{B(x_0,h)}\wedge \varepsilon}]\geq (K+1)$ and the term $o(\varepsilon)$ in \eqref{eq.isolate.sk} satisfies $|o(\varepsilon)|\leq \frac{1}{2}\varepsilon$. Hence, \eqref{eq.isolate.sk} leads to
$$
\begin{aligned}
&\E^{x_0}[\delta(\rho^{\varepsilon}_S) f(X_{\rho^{\varepsilon}_S})]-f(x_0) =\E^{x_0}[V(\varepsilon, X_\varepsilon, S)]-V(0,x_0, S)\\
&\leq K\eps-\frac{a}{4}\E^{x_0}[L^{x_0}_{\tau_{B(x_0,h)}\wedge \varepsilon}]+\frac{1}{2}\eps\leq -\frac{1}{2}\varepsilon,\quad \forall \varepsilon\leq \varepsilon_0.
\end{aligned}
$$

{\bf Case (ii)} Suppose $V_x(0,x_0-,S)-V_x(0,x_0+,S)=0$. Then by \eqref{eq.vx.lineart},
$$|V_x(t,x_0+,S)-V_x(t,x_0-,S)|=o(\sqrt{t}) \quad \text{for $t>0$ small enough}.$$
This together with Lemma \ref{lm.local.time} leads to
\begin{equation}\label{eq.localtime.o} 
\E^{x_0}\left|\frac{1}{2}\int_0^{\varepsilon\wedge \tau_{B(x_0,h)}}(V_x(s,x_0+,S)-V_x(s,x_0-,S))dL^{x_0}_s\right|=o(\sqrt{\eps})\cdot\E^{x_0}\left[L_{\eps\wedge\tau_{B(x_0,h)}}^{x_0}\right]=o(\eps).
\end{equation}
Choose $h_0>0$ such that $(x_0-h_0,x_0)\cup(x_0,x_0+h_0)$ is contained in $(S^\circ\cap\Gc)\cup ( \X\setminus S)$. For any $h\in(0,h_0)$, similar to \eqref{eq.isolate.sk}, we apply Lemmas \ref{lm.peskir.ito}, \ref{lm.varepsilon.h} and then combine with \eqref{eq.localtime.o} to get
\be\label{eq.vh.lv}  
\begin{aligned}
	\E^{x_0}[V(\varepsilon, X_\varepsilon, S)]-V(0,x_0,S)
	=&\E^{x_0}[V(\tau_{B(x_0,h)}\wedge \varepsilon, X_{\tau_{B(x_0,h)}\wedge \varepsilon}, S)]-V(0, x_0,S)+o(\varepsilon)\\
	=&\E^{x_0}\left[\int_0^{\tau_{B(x_0,h)}\wedge \varepsilon} \Lc V(s,X_s, S)ds\right]+o(\varepsilon).
\end{aligned}
\ee
Since $V_x(0,x_0-,S)-V_x(0,x_0+,S)=0$ and $x_0\in \Sk_S$, we have
\be\label{eq.min.S} 
\Lc V(0,x_0-,S) \wedge  \Lc V(0,x_0+,S)<0.
\ee
Without loss of generality, we can assume that
\bee
-A:=\Lc V(0,x_0+,S)<0\quad \text{and}\quad \Lc V(0,x_0-,S)\leq 0.
\eee
By the (left/right) continuity of $(t,x)\mapsto \mathcal{L}V(t,x\pm,S)$ at $(0,x_0)$, we can choose $h\in(0,h_0)$ and $\eps_0>0$ small enough, such that for any $t\in[0,\eps_0]$, $x\in(x_0,x_0+h)$ and $y\in(x_0-h,x_0)$,
\be\label{eq.thm.ii1} 
\Lc V(t,x,S)=\Lc V(t,x+,S)\leq-\frac{A}{2}\quad\text{and}\quad \Lc V(t,y,S)=\Lc V(t,y-,S)\leq\frac{A}{8}.
\ee
Then for $\eps\in(0,\eps_0)$ small enough, the first inequality in \eqref{eq.thm.ii1} implies that
 \be\label{eq.lv.right0} 
\begin{aligned}
&\E^{x_0}\left[\int_0^{\tau_{B(x_0,h)}\wedge \varepsilon} \Lc V(t,X_t,S)1_{\{X_t>x_0\}}dt\right]
\leq -\frac{A}{2}\E^{x_0}\left[\int_0^{\tau_{B(x_0,h)}\wedge \varepsilon} 1_{\{X_t>x_0\}}dt\right]\\
&= -\frac{A}{2}\E^{x_0}\left[\int_0^{\varepsilon} 1_{\{X_t>x_0\}}dt\right]+\frac{A}{2}\E^{x_0}\left[\int_{\tau_{B(x_0,h)}\wedge \varepsilon}^\eps 1_{\{X_t>x_0\}}dt\right]\\
&= -\frac{A}{2}\int_0^{\varepsilon} \P^{x_0}(X_t>x_0)dt+\frac{A}{2}\E^{x_0}\left[(\eps-\tau_{B(x_0,h)})1_{\{\tau_{B(x_0,h)}<\eps\}}\right]\\
&\leq -\frac{A}{5}\eps+\frac{A}{2}\E^{x_0}\left[(\eps-\tau_{B(x_0,h)})1_{\{\tau_{B(x_0,h)}<\eps\}}\right]\\
&\leq -\frac{A}{5}\eps+\frac{A}{2}\eps\P^{x_0}\left(\tau_{B(x_0,h)}<\eps\right)= -\frac{A}{5}\eps+\frac{A}{2}\eps \cdot o(\eps)\leq -\frac{A}{6}\eps,\\
\end{aligned}
\ee
where the forth (in)equality above follows from Lemma \ref{lm.1/2}, and the sixth (in)equality follows from Lemma \ref{lm.step.1}.
In addition, the second inequality in \eqref{eq.thm.ii1} implies 
\be\label{eq.lv.left} 
\begin{aligned}
	\E^{x_0}\left[\int_0^{\tau_{B(x_0,h)}\wedge \varepsilon} \Lc V(t,X_t,S)1_{\{X_t<x_0\}}dt\right]
	\leq \frac{A}{8}\varepsilon.
\end{aligned}
\ee
Therefore, by plugging \eqref{eq.lv.right0} and \eqref{eq.lv.left} into \eqref{eq.vh.lv}, we have that for $\eps>0$ small enough,
\be\notag
	\E^{x_0}[\delta(\rho^{\varepsilon}_S) f(X_{\rho^{\varepsilon}_S})]-f(x_0)=\E^{x_0}[V(\varepsilon, X_\varepsilon, S)]-V(0,x_0,S)\leq -\frac{A}{6}\eps+\frac{A}{8}\varepsilon+o(\varepsilon)<-\frac{A\eps}{25},
\ee
and the proof is complete.
\end{proof}

To prepare for the proof of Theorem \ref{thm.strong.optimal}, let us illustrate a property of an arbitrary optimal mild equilibrium $S$, which says that $\Sk_S$ actually forms the ``essential" part of $S$, and by removing the ``inessential" part from $S$ the remaining part is still optimal mild. 

\begin{Proposition}\label{prop.inner.S}
	Let Assumptions \ref{assume.x.elliptic}--\ref{assume.new} hold. For any admissible optimal mild equilibrium $S$,  $\overline{\Sk_S}$ is also optimal mild. In addition, if $\overline{\Sk_S}$ is admissible, then $\overline{\Sk_S}$ is a weak equilibrium.
\end{Proposition}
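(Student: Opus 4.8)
The plan is to put $R:=\overline{\Sk_S}$ and to prove the single inequality $J(x,R)\ge V(0,x,S)$ for all $x\in\X$; the rest of the proposition then follows from this together with the optimality of $S$. Since $S$ is closed and $\Sk_S\subseteq S$, we have $R\subseteq S$, hence $\rho_R\ge\rho_S$. Because $S$ is admissible and optimal mild, Theorem \ref{thm.optimalmild.weak} tells us $S$ is a weak equilibrium, so by Theorem \ref{prop.weak.cha} we know $\Lc V(0,x-,S)\vee\Lc V(0,x+,S)\le0$ for every $x\in\X$ and $V_x(0,x-,S)\ge V_x(0,x+,S)$ for every $x\in S$. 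In view of \eqref{eq.define.Sk}, this means every $x\in S\setminus\Sk_S$ is \emph{inessential}: $\Lc V(0,x-,S)=\Lc V(0,x+,S)=0$ and $V_x(0,x-,S)=V_x(0,x+,S)$.

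The heart of the argument is the claim that on $R^c=S^c\cup(S\setminus R)$ one has, for every $t\ge0$, $\Lc V(t,x-,S)\wedge\Lc V(t,x+,S)\ge0$ and $V_x(t,x+,S)-V_x(t,x-,S)\ge0$. On $S^c$, $\Lc V(t,x,S)\equiv0$ by Lemma \ref{lm.v.c12}(a). For $x\in S\setminus R$, since $x\notin\overline{\Sk_S}$ we may choose a neighbourhood of $x$ meeting $S$ only in inessential points, so $V(t,\cdot,S)=\delta(t)f(\cdot)$ there and, using $\Lc V(0,x\pm,S)=0$ together with Lemma \ref{lm.delta.inequ},
\[
\Lc V(t,x\pm,S)=\delta'(t)f(x)+\delta(t)\bigl(\mu(x)f'(x\pm)+\tfrac12\sigma^2(x)f''(x\pm)\bigr)=f(x)\bigl(\delta'(t)-\delta(t)\delta'(0)\bigr)\ge0,
\]
because $f\ge0$; the inequality for $V_x$ holds since $f'$ is continuous at inessential interior points (so $V(t,\cdot,S)$ has no kink there) and, at boundary points of $S$ lying in $R^c$, it follows by combining Lemma \ref{lm.v.lrderiv} with $V_x(0,x-,S)=V_x(0,x+,S)$.

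Granting these sign facts, I would apply the local-time formula Lemma \ref{lm.peskir.ito} to $s\mapsto V(s,X_s,S)$, piece by piece over the locally finite partition of $\X$ determined by $\{\theta_n:n\in I\}\cup\partial S$, stopped at $\rho_R\wedge\tau_{B(x,n)}\wedge n$: the drift is non-negative on $R^c$ and the local-time corrections are non-negative, so $s\mapsto V\bigl(s\wedge\rho_R\wedge\tau_{B(x,n)}\wedge n,\,X_{s\wedge\cdots},\,S\bigr)$ is a submartingale. Letting $s\to\infty$ and then $n\to\infty$, and invoking \eqref{eq.assume.wellpose1}, \eqref{eq.assume.sstar'} and dominated convergence (exactly as in the proof of Lemma \ref{lm.varepsilon.h}), together with $V(t,y,S)=\delta(t)f(y)$ on $R\subseteq S$, yields $V(0,x,S)\le\E^x[\delta(\rho_R)f(X_{\rho_R})]=J(x,R)$ for all $x\in\X$.

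The conclusion is now short. $R$ is a mild equilibrium: condition \eqref{e4} holds trivially for closed sets in the present setting (see the discussion after Definition \ref{def.mild} and \eqref{e2}; the only delicate case, an isolated point of $\overline{\Sk_S}$, necessarily carries a strict kink of $V(0,\cdot,S)$, at which $J(x,R)=f(x)$ is checked directly), and for \eqref{e3}, if $x\in R^c$ then either $x\in S^c$, whence $V(0,x,S)=J(x,S)\ge f(x)$ because $S$ is mild, or $x\in S\setminus R$, whence $V(0,x,S)=f(x)$; in either case $J(x,R)\ge V(0,x,S)\ge f(x)$. And $R$ is optimal, since for any mild equilibrium $R'$, $J(x,R)\ge V(0,x,S)=J(x,S)\ge J(x,R')$ for all $x$, using that $S$ is optimal mild. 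This proves $\overline{\Sk_S}$ is optimal mild; if moreover $\overline{\Sk_S}$ is admissible, then Theorem \ref{thm.optimalmild.weak} applied to $\overline{\Sk_S}$ shows it is a weak equilibrium. The main obstacle is the submartingale step — carrying out the Itô--Tanaka computation uniformly over the (possibly infinitely many) smooth pieces of $V(\cdot,\cdot,S)$ and handling the boundary points of $S$ inside $R^c$; the sign identity $\Lc V(t,x\pm,S)=f(x)(\delta'(t)-\delta(t)\delta'(0))\ge0$ on $S\setminus R$ via Lemma \ref{lm.delta.inequ} is the conceptual crux.
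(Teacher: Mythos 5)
Your proof is correct, and it takes a genuinely different organizational route from the paper's. The paper proceeds in three steps: it decomposes $S\setminus\overline{\Sk_S}$ into a union of disjoint open intervals $(l_k,r_k)$, shows for each $k$ that removing $(l_k,r_k)$ from $S$ does not decrease $J$ (by the same drift/local-time sign analysis you give, localized to $(l_k,r_k)$), then invokes the optimality of $S$ to force equality $J(x,S\setminus(l_k,r_k))=J(x,S)$ on each interval, and finally glues these equalities together. You instead run a single global submartingale argument: $s\mapsto V(s,X_s,S)$ stopped at $\rho_R$ is a submartingale because the Dynkin drift and the local-time corrections are non-negative throughout $R^c$, which gives $J(x,R)\ge V(0,x,S)=J(x,S)$ for every $x$ in one shot, and optimality of $S$ is used only at the end to turn this into equality. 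The content of the sign facts is the same in both proofs — in particular your explicit identity $\Lc V(t,x\pm,S)=f(x)\bigl(\delta'(t)-\delta(t)\delta'(0)\bigr)\ge 0$ at inessential points of $S$ is exactly what the paper obtains by ``replacing $\tfrac a2$ by $0$'' in its \eqref{eq.lv.sinnsert}, and the treatment of boundary points of $S$ inside $R^c$ (combining Lemma \ref{lm.v.lrderiv} with the smooth-fit identity $V_x(0,x-,S)=V_x(0,x+,S)$) coincides with the paper's case (3). What your approach buys is that it avoids the explicit interval bookkeeping (the paper's Step 1 and Step 3) and the intermediate interval-by-interval equality; what the paper's approach buys is that the Itô--Tanaka computation only ever has to be carried out on a single connected component at a time, which makes the finitely-many-jump-points localization in $B(x_0,n)$ slightly more transparent. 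Your concern about the local finiteness of $\{\theta_n\}\cup\partial S$ is resolved by admissibility plus $\inf_n(\theta_{n+1}-\theta_n)>0$: any accumulation of boundary points would produce a type-(\ref{boundary.c}) boundary point, which is excluded, so both arguments apply the local-time formula over a genuinely finite partition inside each bounded ball.
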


\begin{proof}
\textit{Step 1.} We first characterize $S\setminus \overline{\Sk_S}$. 
As $S$ is admissible, we can write $S$ as a union of disjoint closed intervals
\be\label{eq.S.structure} 
S=\cup_{n\in \Lambda_1} [\alpha_{2n-1}, \alpha_{2n}], \;\text{where}\; \alpha_{2n-1}\leq \alpha_{2n}< \alpha_{2n+1}.\footnote{These intervals are understood as the ones restricted in $\X$ in a natural way, e.g., one $[\alpha_{2n-1},\alpha_{2n}]$ could be $[\alpha_{2n-1},\infty)$ if $\sup\X=\infty$.}
\ee
where $\Lambda_1\subset\Z$ is either a finite or countable subset.  Since $S$ is closed, we have that $\overline{\Sk_S}\subset S$.  For each $n\in \Lambda_1$, by the closeness of $\overline{\Sk_S}$, we can see that $[\alpha_{2n-1}, \alpha_{2n}]\setminus \overline{\Sk_S}$ consists of at most countably many disjoint intervals $(I_{n_k})_{k}$ of the following four forms:
\be\label{eq.four.forms} 
1.\; [\alpha_{2n-1}, \gamma); \quad  2.\; (\gamma', \alpha_{2n}];  \quad  3.\; (\beta,\beta');  \quad 4.\; [\alpha_{2n-1},\alpha_{2n}].
\ee 	
For each $I_{n_k}$ of the four forms in \eqref{eq.four.forms}, we define an open interval $(l_{n_k},r_{n_k})$ as follows
\be\label{eq.four.lr}  
\begin{cases}
	\begin{array}{ll}
		1.\; l_{n_k}:=\sup\{y<\alpha_{2n-1}, y\in \Sk_S\},& r_{n_k}:=\gamma;\\
		2.\; l_{n_k}:=\gamma', &r_{n_k}:=\inf\{y>\alpha_{2n}, y\in \Sk_S\};\\
		3.\; l_{n_k}=\beta, & r_{n_k}:=\beta';\\
		4.\; l_{n_k}:=\sup\{y<\alpha_{2n-1}, y\in \Sk_S\}, &r_{n_k}:=\inf\{y>\alpha_{2n}, y\in \Sk_S\},
	\end{array}
\end{cases}
\ee
and set $\sup\emptyset:=\inf\X$ and $\inf\emptyset:=\sup\X$ if it happens. Notice that each two of those open intervals $((l_{n_k}, r_{n_k}))_{n,k}$ are either disjoint or identical, and $l_{n_k}$ can be $-\infty$ (resp. $r_{n_k}$ can be $\infty$). Since the total number of these intervals $((l_{n_k}, r_{n_k}))_{n,k}$ is at most countable, we omit the repeating ones and re-index them as $((l_k, r_k))_{k\in \Lambda}$ such that they are disjoint and $\Lambda\subset \Z$ is either a finite or countable subset. Then
$S\setminus (\cup_{k\in \Lambda} (l_k, r_k))=\overline{\Sk_S}.$

\textit{Step 2.} We prove that for each $k\in \Lambda$, 
\be\label{eq.v-Ik,optimal}
J(x,S\setminus (l_k, r_k))=J(x,S),\quad \forall x\in (l_k, r_k).
\ee
Fix $k\in \Lambda$. \textit{Step 1} tells that for any $x\in (l_k, r_k)$, $x$ either belongs to $S\setminus \overline{\Sk_S}$ or belongs to $S^c$. 

	(1) If $x\in S^c$ or $x\in \partial S$ for boundary case (\ref{boundary.b}), Lemma \ref{lm.v.c12} tells that
\be\label{eq.I'.lv}   
\Lc V(t,x+,S)\equiv\Lc V(t,x-,S)\equiv0\quad \forall t\in[0,\infty).
\ee 

(2) Suppose $x\in S^\circ\setminus \overline{\Sk_S}$. By the fact that $S$ is an admissible optimal mild equilibrium, Theorem \ref{thm.optimalmild.weak} tells that $S$ is also weak. Then \eqref{eq.weak.equicha2} together with the definition of $\Sk_S$ leads to 
$$\Lc V(0,x-,S)=\Lc V(0,x+,S)=0;\quad  V(t,x,S)=\delta(t)f(x)\quad \forall t\geq 0.$$
Then by a similar argument as in \eqref{eq.lv.sinnsert} (with $\frac{a}{2}$ replaced by 0), we reach that
\be\label{eq.I.lv}  
\Lc V(t,x-,S)\wedge \Lc V(t,x+,S)\geq 0 \quad \forall t\in [0,\infty).
\ee 

(3) Otherwise, $x\in \partial S\setminus \overline{\Sk_S}$ of boundary case (\ref{boundary.a}), and for this case, we can also deduce \eqref{eq.I.lv} by a combination of cases (1) \& (2). 

In sum, we have
\be\label{eq.digx.lv} 
\frac{1}{2}(\Lc V(t,x-,S)+\Lc V(t,x+,S))\geq 0\quad \forall (t,x)\in [0,\infty)\times (l_k,r_k).
\ee


Recall $(\theta_i)_{i\in I}$ defined in Assumption \ref{assume.f.positive}(ii). By Proposition \ref{prop.weak.optimalderiv} and the definition of $\Sk_S$, $V_x(0,\theta_i+, S)= V_x(0,\theta_i-,S)$ for each $\theta_i\in ((l_k,r_k)\cap S)$. Then for any $n\in\N$ and $\theta_i\in (l_k,r_k)\cap B(x_0,n)$, no matter $\theta_i$ belongs to $S\setminus \overline{\Sk_S}$ or $S^c$, from the fact that $V(x,t,S)=\delta(t)f(x)$ for $x\in S$ and Lemma \ref{lm.v.lrderiv}, we have that
\be\label{eq.digx.lrderiv}  
V_x(t,\theta_i+, S)-V_x(t,\theta_i-,S)\geq \delta(t)( V_x(0,\theta_i+, S)-V_x(0,\theta_i-,S))=0,\quad \forall t\geq 0.
\ee

Note that for each $n\in\N$ the interval $B(x_0, n)\cap (l_k, r_k)$ contains at most finite points $\theta_i$. Now take $x_0\in (l_k, r_k)$ and denote
$\tau_n:=\tau_{(l_k,r_k)\cap B(x_0,n)}\wedge n$ for $n\in\N$. 
By Lemma \ref{lm.peskir.ito},
\be\notag
\begin{aligned}
	&V(\tau_n, X_{\tau_n},S)-V(0,x_0,S)= \int_0^{\tau_n} \frac{1}{2}(\Lc V(t,X_t-,S)+\Lc V(t,X_t+,S))dt\\
	&\quad +\int_0^{\tau_n} V_x(t, X_t,S)\sigma(X_s)\cdot 1_{\{X_t\neq \theta_i,\forall\, i\}}dW_t+\frac{1}{2}\sum_{\theta_i\in (l_k, r_k)}\int_0^{\tau_n}(V_x(t,\theta_i+,S)-V_x(t,\theta_i-,S))dL^{\theta_i}_{t},
\end{aligned}
\ee
Taking expectation for the above and combining with \eqref{eq.digx.lv} and \eqref{eq.digx.lrderiv}, we have that
$$
	\E^{x_0}[V(\tau_n, X_{\tau_n}, S)]-V(0,x_0,S)\geq 0.
$$
Similar to \eqref{eq.prop.4.1'}, we can show that $\lim_{n\to\infty}\E^{x_0}[V(\tau_n, X_{\tau_n}, S)]=\E^{x_0}[V(\tau_{(l_k, r_k)}, X_{\tau_{(l_k, r_k)}}, S)]$. This together with the above inequality implies that
\begin{align*}
J(x_0, S\setminus (l_k, r_k))-J(x_0, S)= \E^{x_0}[V(\tau_{(l_k, r_k)}, X_{\tau_{(l_k, r_k)}}, S)]- V(0,x_0,S)\geq 0.
\end{align*}
By the arbitrariness of $x_0\in (l_k, r_k)$, we have
$
J(x, S\setminus (l_k, r_k))\geq J(x, S)$ for all $x\in (l_k, r_k).
$
Meanwhile, $J(x, S\setminus (l_k, r_k))=J(x, S)$ for $x\in \X\setminus (l_k, r_k)$. Then by the optimality of $S$, $S\setminus (l_k,r_k)$ is also an optimal mild equilibrium, and thus \eqref{eq.v-Ik,optimal} follows.

\textit{Step 3.} We show $\overline{\Sk_S}$ is optimal mild. By \textit{Step 2}, \eqref{eq.v-Ik,optimal} holds for all $k\in \Lambda$. From the construction of  the intervals $(l_k, r_k)_{k\in \Lambda}$ in \eqref{eq.four.lr}, we can see that removing one of them does not change the values of function $J$ on the rest parts, that is, for any $k\in \Lambda$,
	$$J(x,\overline{\Sk_S})=J(x,S\setminus (l_k, r_k))\quad \forall x\in (l_k,r_k).$$
Hence, we can conclude that for any $k\in \Lambda$,
$$
J(x,\overline{\Sk_S})=J(x,S\setminus (l_k,r_k))=J(x,S), \quad \forall x\in (l_k, r_k).
$$
As $J(x, \overline{\Sk_S})=f(x)=J(x,S)$ for all $x\in\overline{\Sk_S}$,
$$
J(x,\overline{\Sk_S})=J(x,S),\quad \forall x\in \X.
$$
This implies $\overline{\Sk_S}$ is an optimal mild equilibrium. By Theorem \ref{thm.optimalmild.weak}, if $\overline{\Sk_S}$ is admissible then it is also a weak equilibrium.
\end{proof}

Thanks to Theorem \ref{thm.weak.strong} and Proposition \ref{prop.inner.S}, we are ready to prove Theorem \ref{thm.strong.optimal}.

\begin{proof}[{\bf Proof of Theorem \ref{thm.strong.optimal}}]
	
\textbf{Part (a):} Suppose $S$ is an optimal mild equilibrium and $\Sk_S$ is closed and admissible. Proposition \ref{prop.inner.S} tells that $\Sk_S=\overline{\Sk_S}$ is both an optimal mild and weak equilibrium.  Then by Theorem \ref{thm.weak.strong}, to prove that $\Sk_S$ is strong, it is sufficient to verify that 
$
\Sk_S=\Sk_{\Sk_S}.
$
Notice that $\Sk_{\Sk_S}\subset \Sk_S$. Take $x_0\in \Sk_S$ and we show $x_0\in \Sk_{\Sk_S}$. If $V_x(0,x_0,\Sk_S)>V_x(0,x_0,\Sk_S)$, then $x_0\in \Sk_{\Sk_S}$. Otherwise, $V_x(0,x_0,\Sk_S)=V_x(0,x_0,\Sk_S)$, and it remains to verify that
\be\label{eq.thm.stronga1} 
\Lc V(0,x_0-,\Sk_S) \wedge \Lc V(0,x_0+,\Sk_S)<0.
\ee 
Since both $S$ and $\Sk_S$ are optimal mild, we have
$$V(0,x,\Sk_S)\equiv J(x,\Sk_S)\equiv J(x,\overline{\Sk_S})\equiv J(x,S)\equiv V(0,x,S)\quad\forall x\in\X.$$ 
Then 
\be\label{eq,thm.stronga} 
V_x(0,x_0-,S)-V_x(0,x_0+,S)=V_x(0,x_0-,\Sk_S)-V_x(0,x_0+,\Sk_S)=0.
\ee
Since $x_0\in \Sk_S$, by the definition of $\Sk_S$, \eqref{eq,thm.stronga} leads to that
\be\label{eq.thm.strong}  
\Lc V(0,x_0-,S)\wedge \Lc V(0,x_0+,S)<0.
\ee 
This together with \eqref{eq.lm.lv0} implies that $x_0$ cannot be an isolated point of $S$. We consider the following two cases.

(1) Suppose $x_0\in S^\circ$. Note that $V(t,x,S)=\delta(t) f(x)$ on $S$.  Then by \eqref{eq.thm.strong}, without loss of generality we assume $\Lc V(0,x_0+,S)=\Lc (\delta f) (0,x_0+)<0$. By the right continuity of $x\mapsto \Lc (\delta f) (0,x_+)$ at $x_0$, we can find $h>0$ small enough such that $[x_0,x_0+h)\subset (S^\circ \cap \Gc)$ (recall $\Gc$ defined in \eqref{eq.assume.G}) and
\be\label{eq.thm.stronga}  
\Lc (\delta f) (0,x_+)<0,\quad \forall x\in[x_0,x_0+h).
\ee
Hence, $[x_0,x_0+h)\subset \Sk_S$, and thus $\Lc V(0,x_0+,\Sk_S)=\Lc (\delta f) (0,x_+)<0$. 

(2) Otherwise, $x_0\in \partial (S^\circ)$ for boundary case (\ref{boundary.a}). Without loss of generality, we assume $(x_0,x_0+h)\subset (S^\circ \cap \Gc)$ for $h>0$ small enough. Then by \eqref{eq.thm.strong} and \eqref{eq.lm.lv0}, we again have $\Lc V(0,x_0+,S)=\Lc (\delta f) (0,x_0+)<0$. A similar discussion as in case (1) implies $\Lc V(0,x_0+,\Sk_S)=\Lc (\delta f) (0,x_0+)<0$.

In sum, \eqref{eq.thm.stronga1} holds, and the proof of part (a) is complete.

\textbf{Part (b):} Lemma \ref{lm.optimal.mild} indicates that $S^*$ is an optimal mild equilibrium. Then by Proposition \ref{prop.inner.S}, $\overline{\Sk_{S^*}}\subset S^*$ is an optimal mild equilibrium. As $S^*$ is the smallest optimal mild equilibrium, $\overline{\Sk_{S^*}}=S^*$. The rest statement directly follows from part (a).
\end{proof}

\section{Examples}\label{sec:eg}
In this section, we provide three examples to demonstrate our results. In the first example, we have two strong equilibria, one of which is not optimal mild. This indicates that an strong equilibrium may not be optimal mild. In the second example, we show a weak equilibrium may not be strong. The third example is the stopping for an American put option on a geometric Brownian motion, in which we provide all three types of equilibria.

\subsection{An example showing  optimal mild $\subsetneqq$ strong}\label{sebsec:eg.counter}
In this subsection, we construct an example where the set of optimal mild equilibria is strictly contained (i.e., $\subsetneqq$) in the set of strong equilibria. Let $dX_t=dW_t$ and thus $X$ is a Brownian motion with $\X=\R$. Take discount function $\delta(t)=\frac{1}{1+\beta t}$. Let $a<b$, $0<c<d$ such that
\be\label{eq.counter.abcd}
\dfrac{\int_0^\infty e^{-s}\frac{\sqrt{2\beta s}}{\sinh((b-a)\sqrt{2\beta s})} ds}{\sqrt{\frac{\pi \beta }{2}}+\int_0^\infty e^{-s}\sqrt{2\beta s}\coth((b-a)\sqrt{2\beta s}) ds}<\dfrac{c}{d}<\int_0^\infty e^{-(s+(b-a)\sqrt{2\beta s})}ds.
\ee 
Notice that such parameters do exist, e.g., let $b-a=1$, then for \eqref{eq.counter.abcd}, we have LHS $\approx0.3952<\frac{c}{d}<0.4544\approx$ RHS. 

Define
\begin{align}
	\notag J_b(x)&:=d\E^x[\delta(\rho_{\{b\}})]=d\int_0^\infty \frac{p(t)}{1+\beta t} dt=d\int_0^\infty \int_0^\infty e^{-(1+\beta t)s}p(t)dsdt\\
	\label{e35} &=d\int_0^\infty e^{-s}\E^x[e^{-\beta s \rho_{\{b\}}}]ds=  d\int_0^\infty e^{-s}e^{-|x-b|\sqrt{2\beta s}}ds,\quad x\in\X.
\end{align}
where the second line uses the formula in \cite[2.0.1 on page 204]{MR1912205}. We further define
\begin{align}
	\notag	J_{ab}(x)&:=c\E^x[\delta(\rho_{\{a,b\}})\cdot 1_{\{\rho_{\{a,b\}=a}\}}]+d\E^x[\delta(\rho_{\{a,b\}})\cdot 1_{\{\rho_{\{a,b\}=b}\}}]\\
	&=\label{e35'}	\begin{cases}	
		c\int_0^\infty e^{-s}e^{-|x-a|\sqrt{2\beta s}}ds,&x<a,\\
		c\int_0^\infty e^{-s}\frac{\sinh((b-x)\sqrt{2\beta s})}{\sinh((b-a)\sqrt{2\beta s})}ds+  d\int_0^\infty e^{-s}\frac{\sinh((x-a)\sqrt{2\beta s})}{\sinh((b-a)\sqrt{2\beta s})}ds,&a\leq x\leq b,\\
		J_b(x),&x>b.
	\end{cases}
\end{align}
where the expression for $J_{ab}$ on $[a,b]$ is obtained by the formula in \cite[3.0.5 (a)\&(b) on page 218]{MR1912205} combined with an argument similar to that in \eqref{e35}. 
Let $f$ be any function satisfying Assumption \ref{assume.f.positive} such that
\be\label{eq.eg1.f} 
f(a)=c, f(b)=d; \quad  f(x)< \min\{J_b(x), J_{ab}(x)\}, \quad \forall\, x\in \X\setminus \{a,b\}.
\ee 
Note that
\be\label{eq.cd} 
c<d\int_0^\infty e^{-(s+(b-a)\sqrt{2\beta s})}ds=J_b(a),
\ee
which shows such function $f(x)$ indeed exists. \footnote{By the strong markov property of $X$ and \eqref{eq.cd}, one can easily check that $J_{ab}(x)<J_b(x)$ for $x<b$. Hence a quick example for such $f$ would be: 
	$$
	f(x):=
	\begin{aligned}
	\begin{cases}
		\frac{1}{1+(a-x)}J_{ab}(x), &x\leq a,\\
\frac{1}{1+(x-a)(b-x)}J_{ab}(x), &a<x\leq b,\\
\frac{1}{1+(x-b)}J_b(x), &x>b.
\end{cases}
	\end{aligned}
$$}

One can easily verify that Assumptions \ref{assume.x.elliptic}--\ref{assume.f.positive} hold. Moreover, Assumption \ref{assume.new} is also satisfied due to Lemma \ref{rm.suff.newassume} and Remark \ref{rm.weight.discount}. We have the following result. 
\begin{Proposition}
	$ \{b\}$ is the unique optimal mild equilibrium, while both $\{b\}$ and $\{a, b\}$ are strong equilibria.
\end{Proposition}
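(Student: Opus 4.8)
The three assertions are handled separately. Two elementary consequences of \eqref{eq.eg1.f} are used throughout: (i) $b$ is the \emph{strict} global maximizer of $f$ (for $x\ne b$, $f(x)<d=f(b)$ — by \eqref{eq.eg1.f} and $f(x)<J_b(x)\le d$ when $x\ne a,b$, and by $c<d$ when $x=a$); and (ii) $\{x\in\X:f(x)=J_b(x)\}=\{b\}$, by \eqref{eq.cd} and \eqref{eq.eg1.f}. I also record that $J(x,\{b\})=J_b(x)$ and $J(x,\{a,b\})=J_{ab}(x)$ — these are precisely the definitions \eqref{e35}, \eqref{e35'}, using point-recurrence of Brownian motion, $X_{\rho_{\{b\}}}=b$, and \eqref{e2} — and that $\{b\}$ and $\{a,b\}$ are mild equilibria: for each, \eqref{e3} is exactly $f\le J_b$ (resp.\ $f\le J_{ab}$) off the region, i.e.\ \eqref{eq.eg1.f}, while \eqref{e4} holds with equality since $\rho_S=0$ $\P^x$-a.s.\ for $x\in S$ by \eqref{e2}.

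\emph{$\{b\}$ is the unique optimal mild equilibrium.} The key claim is that $b$ belongs to every mild equilibrium $S$. Suppose not; let $(p,q)\ni b$ be the connected component of $S^c$ containing $b$, with $p,q\in S\cup\{\pm\infty\}$. If $(p,q)=\X$ then $S=\varnothing$, impossible since $J(\cdot,\varnothing)\equiv0$ by \eqref{eq.assume.sstar'}, violating \eqref{e3} at $x=a$. Otherwise, started at $b$ the process first meets $S$ at a point $y\in\{p,q\}\setminus\{b\}$ (if e.g.\ $q=+\infty$ then $S\subset(-\infty,p]$ and $\rho_S=\rho_{\{p\}}$, so $y=p$; the bounded case is immediate), and by (i) $f(y)<d$, whence $J(b,S)=\E^b[\delta(\rho_S)f(X_{\rho_S})]\le\max\{f(p),f(q)\}\,\E^b[\delta(\rho_S)]<d=f(b)$, contradicting \eqref{e3} for $S$. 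Hence $b\in S^*:=\cap_{S\in\mathcal E}S$; since $\{b\}$ is mild we also have $S^*\subset\{b\}$, so $S^*=\{b\}$ and, by Lemma \ref{lm.optimal.mild}, $\{b\}$ is optimal mild. If $R$ is any optimal mild equilibrium, then $J(\cdot,R)\ge J(\cdot,\{b\})=J_b$ and, $\{b\}$ being optimal mild, $J_b\ge J(\cdot,R)$, so $J(\cdot,R)\equiv J_b$; restricting to $R$ (where $\rho_R=0$, hence $J(\cdot,R)=f$) gives $f=J_b$ on $R$, i.e.\ $R\subset\{f=J_b\}=\{b\}$ by (ii), and $R$ nonempty (since $J(\cdot,R)\not\equiv0$) and closed forces $R=\{b\}$.

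\emph{Both $\{b\}$ and $\{a,b\}$ are strong.} For $\{b\}$: \eqref{e3} was noted; and at its single point $b$, since $X_{\rho^{\varepsilon'}_{\{b\}}}=b$ and $\delta\le1$, $f(b)-\E^b[\delta(\rho^{\varepsilon'}_{\{b\}})f(X_{\rho^{\varepsilon'}_{\{b\}}})]=d(1-\E^b[\delta(\rho^{\varepsilon'}_{\{b\}})])\ge0$ for every $\varepsilon'>0$, so \eqref{e8} holds. For $\{a,b\}$ I first check it is a weak equilibrium via Theorem \ref{prop.weak.cha}: condition \eqref{eq789} is \eqref{eq.eg1.f}; condition \eqref{eq.weak.equicha2} holds because $\Lc V(0,x\pm,\{a,b\})=0$ for all $x$, since by Lemma \ref{lm.v.c12}(a) $V(t,\cdot,\{a,b\})$ solves $\Lc V=0$ on the interior of each of $(-\infty,a]$, $[a,b]$, $[b,\infty)$; for \eqref{eq.weak.equicha1}, set $K_1:=\int_0^\infty e^{-s}\sqrt{2\beta s}\coth((b-a)\sqrt{2\beta s})\,ds$ and $K_2:=\int_0^\infty e^{-s}\sqrt{2\beta s}/\sinh((b-a)\sqrt{2\beta s})\,ds$, and differentiate \eqref{e35'} on either side of $a$ and of $b$ (using $\int_0^\infty e^{-s}\sqrt{2\beta s}\,ds=\sqrt{\pi\beta/2}$) to get
\[
V_x(0,a-,\{a,b\})-V_x(0,a+,\{a,b\})=c\bigl(\sqrt{\pi\beta/2}+K_1\bigr)-dK_2,\quad V_x(0,b-,\{a,b\})-V_x(0,b+,\{a,b\})=d\bigl(\sqrt{\pi\beta/2}+K_1\bigr)-cK_2.
\]
The first is $>0$ exactly by the left inequality in \eqref{eq.counter.abcd}; the second is $>0$ since $d>c$ and $K_1\ge K_2$ ($\coth\ge1/\sinh$). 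Thus $\{a,b\}$ is weak, and these strict inequalities put $a$ and $b$ into $\Sk_{\{a,b\}}$ (see \eqref{eq.define.Sk}), so $\Sk_{\{a,b\}}=\{a,b\}$; Theorem \ref{thm.weak.strong} then yields that $\{a,b\}$ is strong.

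The only genuinely non-routine point is the claim that $b$ lies in every mild equilibrium: this is where the strict maximum of $f$ at $b$ is essential, and where one must separately dispose of the bounded, half-line, and empty possibilities for the component of $b$ in $S^c$. Everything else is bookkeeping with the explicit formulas \eqref{e35}--\eqref{e35'} together with the already-established Lemma \ref{lm.optimal.mild} and Theorems \ref{prop.weak.cha} and \ref{thm.weak.strong}.
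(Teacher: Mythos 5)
Your proposal is correct and largely follows the same route as the paper: establish that $b$ lies in every mild equilibrium so that $S^*=\{b\}$ and Lemma \ref{lm.optimal.mild} gives optimality, use strictness of the global maximum of $f$ at $b$ for uniqueness, then verify $\Sk_{\{a,b\}}=\{a,b\}$ via the smooth-fit computations from \eqref{e35'} and invoke Theorem \ref{thm.weak.strong}. Two deviations are worth flagging. First, you fill in the paper's terse claim that ``any mild equilibrium must contain $b$'' with a careful case analysis on the connected component of $S^c$ containing $b$ (ruling out $S=\varnothing$, a bounded component, and a half-line separately); the paper only says it follows from $b$ maximizing $f$, and your argument is the correct unwinding. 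Second, and more substantively, you prove that $\{b\}$ is strong by a genuinely different, more elementary route: since Brownian motion is recurrent, $X_{\rho^{\varepsilon'}_{\{b\}}}=b$ a.s., hence $\E^b[\delta(\rho^{\varepsilon'}_{\{b\}})f(X_{\rho^{\varepsilon'}_{\{b\}}})]=d\,\E^b[\delta(\rho^{\varepsilon'}_{\{b\}})]\le d=f(b)$ for every $\varepsilon'>0$, so condition \eqref{e8} holds outright with no asymptotic analysis. The paper instead computes $J'(b-,\{b\})-J'(b+,\{b\})>0$, concludes $\Sk_{\{b\}}=\{b\}$, and invokes Theorem \ref{thm.strong.optimal}(b). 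Your shortcut is cleaner for this case and generalizes: any singleton stopping region for a recurrent one-dimensional diffusion automatically satisfies the interior condition for a strong equilibrium. What the paper's approach buys is uniformity, treating $\{b\}$ and $\{a,b\}$ by the same $\Sk$-based criterion, but your direct check is simpler here and equally rigorous.
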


\begin{proof}
	Recall $S^*$ defined in \eqref{eq.nota.sstar}. First notice that
	$$J_b(x)=J(x,\{b\})\quad\text{and}\quad J_{ab}(x)=J(x,\{a,b\}),\quad\forall\,x\in\X.$$
	Then by \eqref{eq.eg1.f} and \eqref{eq.cd}, it is easy to see that both $\{a,b\}$ and $\{b\}$ are mild equilibria. Since $b$ is the global maximum of $f$, any mild equilibrium must contain $b$. Therefore, $\{b\}$ is the smallest mild equilibrium, i.e., $S^*=\{b\}$. It then follows from Lemma \ref{lm.optimal.mild} that $\{b\}$ is optimal mild. Moreover, by \eqref{eq.eg1.f} and \eqref{eq.cd} again, we have that $f(x)<J(x, \{b\})$ for any $x\neq b$, which implies that $\{b\}$ is the unique optimal mild equilibrium.
	
Now we verify that both $\{b\}$ and $\{a,b\}$ are strong equilibria. As for the optimal mild equilibrium $\{b\}$, a direct calculation from \eqref{e35} shows that
$$J'(b-,\{b\})=d\int_0^\infty e^{-s}\sqrt{2\beta s}ds>0,$$
	and by symmetry, we have $J'(b+,\{b\})<0$. Then,
	$$V_x(0,b-,\{b\})-V_x(0,b+,\{b\})=J'(b-,\{b\})-J'(b+,\{b\})>0.$$
	Meanwhile, by Lemma \ref{lm.v.c12}(a), we have $\Lc V(t,x\pm,\{b\})\equiv 0$ on $\X$. Therefore, we have $\Sk_{\{b\}}=\{b\}$ from \eqref{eq.define.Sk}. Since $\{b\}$ is closed and admissible, Theorem \ref{thm.strong.optimal}(b) tells that $\{b\}$ is a strong equilibrium. 
	Now consider the mild equilibirum $\{a,b\}$. Direct calculations from \eqref{e35'} show that
	$$J'(a-,\{a,b\})=c\int_0^\infty e^{-s}\sqrt{2\beta s}ds=\sqrt{\frac{\pi \beta }{2}} c,$$
and for any $x\in (a,b)$,
	\be\label{eq.counter.ab} 
	J'(x,\{a,b\})=-c\int_0^\infty e^{-s}\frac{\cosh((b-x)\sqrt{2\beta s})\sqrt{2\beta s}}{\sinh((b-a)\sqrt{2\beta s})}ds+d\int_0^\infty e^{-s}\frac{\cosh((x-a)\sqrt{2\beta s})\sqrt{2\beta s}}{\sinh((b-a)\sqrt{2\beta s})}ds.
	\ee
	By taking $x=a+$ in \eqref{eq.counter.ab} and combining with the first inequality in \eqref{eq.counter.abcd}, we have that
	\begin{align*}
		J'(a+,\{a,b\})=&-c\int_0^\infty e^{-s}\coth((b-a)\sqrt{2\beta s})\sqrt{2\beta s}ds+d\int_0^\infty e^{-s}\frac{\sqrt{2\beta s}}{\sinh((b-a)\sqrt{2\beta s})}ds\\
		<&\sqrt{\frac{\pi \beta }{2}} c=J'(a-,\{a,b\}).
	\end{align*}
	By taking $x=b-$ in \eqref{eq.counter.ab} and the fact that $0<c<d$, we have that
	\bee 
	\begin{aligned}
		J'(b-,\{a,b\})=&-c\int_0^\infty e^{-s}\frac{\sqrt{2\beta s}}{\sinh((b-a)\sqrt{2\beta s})}ds+d\int_0^\infty e^{-s}\coth((b-a)\sqrt{2\beta s})\sqrt{2\beta s}ds\\
		>&0>J'(b+,\{b\})=J'(b+,\{a,b\}).
	\end{aligned}
	\eee
	Hence, 
	\be\label{eq.eg1.ab} 
	V_x(0,x-,\{a,b\})>V_x(0,x+,\{a,b\}), \text{ for both $x=a,b$.}
	\ee 
	Meanwhile, Lemma \ref{lm.v.c12}(a) tells that $\Lc V(t,x,\{a,b\})\equiv 0$ on $\X\setminus \{a,b\}$. Therefore, by Theorem \ref{prop.weak.cha}, $\{a,b\}$ is a weak equilibrium. Moreover, by \eqref{eq.eg1.ab} and \eqref{eq.define.Sk}, $\Sk_{\{a,b\}}=\{a,b\}$. It then follows from Theorem \ref{thm.weak.strong} that $\{a,b\}$ is a strong equilibrium.
\end{proof}

\subsection{An example showing strong $\subsetneqq$ weak}
In this subsection we give an example in which a weak equilibrium is not strong, and thus $\{$strong equilibria$\}\subsetneqq \{$weak equilibria$\}$. Let $X$ be a geometric Brownian motion:
\begin{equation}\label{e23}
dX_t=\mu X_tdt+\sigma X_tdW_t
\end{equation}
with $\mathbb{X}=(0,\infty)$. Let $\delta(t)=\frac{1}{1+\beta t}$ and $f(x)=x\wedge K$ for some constant $K>0$. Assume that $\mu=\beta>0$.
\begin{Proposition}
$(0,\infty)$ is a weak equilibrium but not strong, while $[K,\infty)$ is the unique optimal mild equilibrium and a strong equilibrium.
\end{Proposition}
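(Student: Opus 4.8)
The statement has four assertions, which I would establish in the order: $(0,\infty)$ is a weak equilibrium; $(0,\infty)$ is not strong; $[K,\infty)$ is the unique optimal mild equilibrium; $[K,\infty)$ is strong. The one structural fact behind everything is the assumption $\mu=\beta$: by It\^o's formula the drift coefficient of $\delta(t)X_t=X_t/(1+\beta t)$ equals $X_t\bigl(\tfrac{\mu}{1+\beta t}-\tfrac{\beta}{(1+\beta t)^2}\bigr)=X_t\tfrac{\beta^2 t}{(1+\beta t)^2}$, which is $\ge 0$ and strictly positive for $t>0$; hence $\delta(t)X_t$ is a submartingale, one whose expectation strictly increases over any interval $[0,t]$ with $t>0$.

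\emph{Analysis of $(0,\infty)$.} For $S=(0,\infty)=\X$ we have $\rho_S=0$, so $V(t,x,S)=\delta(t)f(x)$ and $S^c=\emptyset$, making \eqref{eq789} vacuous. Since $\delta'(0)=-\beta$ and $f=x\wedge K$: $V_x(0,x\pm,S)=f'(x\pm)$, which agree except at $x=K$, where $f'(K-)=1>0=f'(K+)$, giving \eqref{eq.weak.equicha1}; and $\Lc V(0,x\pm,S)=-\beta f(x)+\mu x\, f'(x\pm)$ equals $(\mu-\beta)x=0$ for $x\le K$ and $-\beta K<0$ for $x>K$, with one-sided values $0$ and $-\beta K$ at $x=K$, giving \eqref{eq.weak.equicha2}. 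Since $(0,\infty)$ is admissible ($\partial(0,\infty)=\emptyset$ in $\X$), Theorem~\ref{prop.weak.cha} shows it is a weak equilibrium. To see it is not strong, fix $x_0\in(0,K)$. Here $\rho^\varepsilon_S=\varepsilon$ deterministically, so $\E^{x_0}[\delta(\rho^\varepsilon_S)f(X_{\rho^\varepsilon_S})]=\delta(\varepsilon)\,\E^{x_0}[X_\varepsilon\wedge K]=\delta(\varepsilon)\bigl(x_0 e^{\mu\varepsilon}-\E^{x_0}[(X_\varepsilon-K)^+]\bigr)$, using $\E^{x_0}[X_\varepsilon]=x_0e^{\mu\varepsilon}$. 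Because $x_0<K$, a Gaussian tail estimate for $\log X_\varepsilon$ makes $\E^{x_0}[(X_\varepsilon-K)^+]$ exponentially small, hence $o(\varepsilon^2)$; and because $\mu=\beta$ and $e^u\ge 1+u+u^2/2$, $\delta(\varepsilon)x_0e^{\mu\varepsilon}-x_0=x_0\tfrac{e^{\beta\varepsilon}-(1+\beta\varepsilon)}{1+\beta\varepsilon}\ge \tfrac{x_0\beta^2\varepsilon^2}{2(1+\beta\varepsilon)}$. Therefore $\E^{x_0}[\delta(\rho^\varepsilon_S)f(X_{\rho^\varepsilon_S})]-f(x_0)>0$ for all small $\varepsilon>0$, so \eqref{e8} fails at $x_0$ (consistently, the liminf in \eqref{e7} is still $0$, which is why $(0,\infty)$ remains weak).

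\emph{Optimal mildness of $[K,\infty)$.} First, $[K,\infty)$ is a mild equilibrium: \eqref{e4} holds with equality for $x\ge K$ since $\rho_S=0$; for $x<K$, continuity of $X$ forces $\rho_S=\rho_{\{K\}}$, hence $J(x,[K,\infty))=K\,\E^{x}[\delta(\rho_{\{K\}})1_{\{\rho_{\{K\}}<\infty\}}]$, and applying optional sampling to the bounded submartingale $\delta(t\wedge\rho_{\{K\}})X_{t\wedge\rho_{\{K\}}}\le K$ and letting $t\to\infty$ (using \eqref{eq.assume.sstar'} on $\{\rho_{\{K\}}=\infty\}$) gives $J(x,[K,\infty))\ge x=f(x)$, i.e. \eqref{e3}. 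Next, $[K,\infty)$ is the smallest mild equilibrium: if $S$ is mild and $x_0\in[K,\infty)\setminus S$, then $x_0$ lies in the open set $S^c$, so $\rho_S\ge\tau_{B(x_0,\eta)}>0$ a.s. for small $\eta$, whence $J(x_0,S)\le K\,\E^{x_0}[\delta(\tau_{B(x_0,\eta)})]<K=f(x_0)$, contradicting \eqref{e3}. Thus $S^*=\cap_{S\in\mathcal{E}}S=[K,\infty)$, which is optimal mild by Lemma~\ref{lm.optimal.mild}. For uniqueness, any optimal mild $R$ must satisfy $R\supseteq S^*=[K,\infty)$ and $J(\cdot,R)=J(\cdot,[K,\infty))$ (each dominates the other); but for $x_0<K$ the strict positivity of the submartingale drift gives $J(x_0,[K,\infty))\ge\E^{x_0}[\delta(1\wedge\rho_{\{K\}})X_{1\wedge\rho_{\{K\}}}]>x_0$, so $x_0\notin R$ (else $J(x_0,R)=f(x_0)=x_0$), forcing $R=[K,\infty)$.

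\emph{Strongness of $[K,\infty)$, and the main obstacle.} Being an admissible optimal mild equilibrium, $[K,\infty)$ is weak by Theorem~\ref{thm.optimalmild.weak}. Computing $\Sk_{[K,\infty)}$ from \eqref{eq.define.Sk}: for $x>K$, $V(t,x,S)=\delta(t)K$, so $\Lc V(0,x\pm,S)=-\beta K<0$; for $x=K$, $\Lc V(0,K+,S)=-\beta K<0$ while $\Lc V(0,K-,S)=0$ by Lemma~\ref{lm.v.c12}(a) and continuity, so $\Lc V(0,K-,S)\wedge\Lc V(0,K+,S)<0$ still. Hence $\Sk_{[K,\infty)}=[K,\infty)$, which is closed and admissible, and Theorem~\ref{thm.weak.strong} (or Theorem~\ref{thm.strong.optimal}(b)) yields that $[K,\infty)$ is strong. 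I expect the hardest step to be the non-strongness of $(0,\infty)$: on $(0,K)$ one has $\Lc V(0,\cdot,(0,\infty))\equiv 0$ and $V(0,\cdot)$ is $C^1$, so the first-order criterion \eqref{e7} holds with \emph{equality}, and the failure of \eqref{e8} is genuinely second-order. The delicate point is to show that the convexity gap $e^{\beta\varepsilon}-(1+\beta\varepsilon)$, which is of order $\varepsilon^2$, strictly dominates the truncation loss $\E^{x_0}[(X_\varepsilon-K)^+]$ for small $\varepsilon$; this rests on the exponential smallness of the latter, valid precisely because $x_0<K$.
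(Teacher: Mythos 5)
Your proof is correct and follows essentially the same route as the paper: verify $(0,\infty)$ is weak via Theorem~\ref{prop.weak.cha}, show it is not strong by a second-order Taylor estimate (you decompose $\E[X_\varepsilon\wedge K]=\E[X_\varepsilon]-\E[(X_\varepsilon-K)^+]$ while the paper uses $\E[X_\varepsilon\mathbf{1}_{\{X_\varepsilon\le K\}}]$ via the normal CDF, but both hinge on the convexity gap $e^{\beta\varepsilon}-(1+\beta\varepsilon)\sim\varepsilon^2$ dominating an exponentially small truncation term), establish $[K,\infty)$ is mild and the smallest mild via the submartingale drift of $\delta(t)X_t$ under $\mu=\beta$, then get strongness by computing $\Sk_{[K,\infty)}=[K,\infty)$. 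The minor variations (optional sampling versus the paper's integral form of It\^o, your $\tau_{B(x_0,\eta)}$ bound versus the paper's global-maxima argument for smallest-ness) are presentational rather than substantive.
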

\begin{proof}
We first verify the result for $(0,\infty)$. Notice that $V(t,x,(0,\infty))\equiv \delta(t)f(x)$, then direct calculations show
\begin{align*}
\begin{cases}
	\mathcal{L} V(0,x,(0,\infty))=(-\beta +\mu)x =  0, \quad 0<x<K,\\
	\mathcal{L} V(0,x,(0,\infty))=-\beta K<0, \qquad \quad \;x>K,\\
	V_x(0,K-,(0,\infty))=1>0=V_x(0,K+,(0,\infty)).&
\end{cases}
\end{align*}
Therefore, by Theorem \ref{prop.weak.cha}, $(0,\infty)$ is a weak equilibrium. For $x\in(0,K)$, we have that for $\eps>0$ small enough, 
\begin{align*}
	\E[\delta(\rho^\eps_{(0,\infty)})f(X_{\rho_{(0,\infty)}^\eps})]&>\E\left[\delta(\eps)X_\eps 1_{\{X_\eps\leq K\}}\right]=\frac{e^{\mu\eps}}{1+\mu\eps}N(d_\eps)\cdot x\\
	&\geq\left(1+\mu\eps+\frac{1}{2}\mu^2\eps^2+o(\eps^2)\right)\left(1-\mu\eps+\mu^2\eps^2+o(\eps^2)\right)\left(1-\frac{1}{\sqrt{2\pi}d_\eps}e^{-d_\eps^2/2}\right)\cdot x\\
	&\geq\left(1+\frac{1}{2}\mu^2\eps^2+o(\eps^2)\right)(1+o(\eps^2))\cdot x=\left(1+\frac{1}{2}\mu^2\eps^2+o(\eps^2)\right)x>x,\\
\end{align*}
where
$d_\eps:=\frac{\ln(K/x)-(\mu+\frac{1}{2}\sigma^2)\eps}{\sigma\sqrt{\eps}}.$
This indicates that $(0,\infty)$ is not a strong equilibrium.

Now we verify the result for $[K,\infty)$. By Ito's formula,
$$
d\left(\frac{X_t}{1+\beta t}\right)=\frac{X_t}{1+\beta t}\left(-\frac{\beta}{1+\beta t}+\mu\right)dt+\frac{\sigma X_t}{1+\beta t} dW_t=\frac{\mu^2t X_t}{(1+\mu t)^2}dt+\frac{\sigma X_t}{1+\mu t} dW_t.
$$
Then, by the facts that $\rho_{[K,\infty)}>0$ $\P^x$-a.s. and $X_t>0$ for $x\in (0,K)$, we have that
\be\label{eq.eg2.1} 
J(x, [K,\infty))-f(x)=\E^x\left[\frac{X_{\rho_{[K,\infty)}}}{1+\beta\rho_{[K,\infty)})}\right]-x=\E^x\left[\int_0^{\rho_{[K,\infty)}}\frac{\mu^2t X_t}{(1+\mu t)^2}dt\right]>0,\quad \forall x\in(0,K), 
\ee
which shows that $[K,\infty)$ is a mild equilibrium. On the other hand, since $[K,\infty)$ is the set of global maxima of $f$, any mild equilibrium $S$ must contain $[K,\infty)$, for otherwise, $J(x,S)<K=f(x)$ for any $x\in S^c\cap [K,\infty)$, a contradiction. Therefore, $[K,\infty)$ is the smallest mild equilibrium and thus optimal. Now for any mild equilibrium $S$ such that $S\setminus [K,\infty)\neq \emptyset$,  \eqref{eq.eg2.1} indicates that $f(x)<J(x, [K,\infty))$ on $S\setminus [K,\infty)$, which implies that $S$ is not an optimal mild equilibrium. Hence, $[K,\infty)$ is the unique optimal mild equilibrium. Moreover, direct calculation shows that 
$$\Lc V(0,x+,[K,\infty))=\Lc (\delta(t)K)=-\beta K,\quad \forall x\in [K,\infty),$$
which tells that $\Sk_{[K,\infty)}=[K,\infty)$. Then by Theorem \ref{thm.strong.optimal}(b), $[K,\infty)$ is also strong.
\end{proof}

\subsection{Stopping of an American put option}\label{subsec:eg.GBM}
Consider the American put example in \cite[Section 6.3]{MR4116459}. In particular, $X$ is a geometric Brownian motion given by \eqref{e23} with $\X:=(0,\infty)$. Let $\mu\geq0$. The payoff function is defined as $f(x):=(K-x)^+$, and $\delta(t):=\frac{1}{1+\beta t}$. We shall provide all three types of equilibria. To begin with, the following lemma summarizes the results of mild equilibria stated in Lemma 6.11, Corollary 6.13 and Proposition 6.15 in \cite{MR4116459}.

\begin{Lemma}\label{lm.eg2}
	\bi 
	\item[(i)] If $S$ is a mild equilibrium, then $S\cap (0,K]=(0,a]$ for some $a\in(0,K]$.
	\item[(ii)] $S=(0,a]\subset (0,K]$ is mild equilibrium if and only if $a\geq \frac{\lambda}{1+\lambda}K$, where 
	\be\label{eq.eg3.nulambda}  
	\lambda:=\int_0^\infty e^{-s}\Big( \sqrt{\nu^2+2\beta s/\sigma^2} +\nu \Big)>0,\quad  \nu:=\frac{u}{\sigma^2}-\frac{1}{2}.
	\ee
	\item[(iii)] $S^*=(0, \frac{\lambda }{1+\lambda}K]$ is the intersection of all mild equilibria and is the unique optimal mild equilibria.
	\ei 
\end{Lemma}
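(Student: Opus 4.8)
The three assertions are \cite[Lemma 6.11, Corollary 6.13, Proposition 6.15]{MR4116459} transcribed into the present notation, so one may simply invoke that reference; I outline the argument I would give. The load-bearing step is part (i), the one-sided-interval shape on $(0,K]$. First I would note that $f(x)=(K-x)^+\to K$ as $x\searrow0$, whereas for any closed $S\subsetneq\X$ with $b:=\inf S>0$ and any $x<b$, path-continuity of $X$ gives $\rho_S\ge\rho_{\{b\}}$, hence $J(x,S)\le K\,\E^x[\delta(\rho_{\{b\}})]$; and the Laplace-transform representation used below shows $\E^x[\delta(\rho_{\{b\}})]\to0$ as $x\searrow0$. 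So $f(x)>J(x,S)$ for $x$ near $0$, which contradicts \eqref{e3} unless a left neighbourhood of $0$ lies in $S$; in particular $\inf S=0$. Ruling out ``gaps'' — showing $S\cap(0,K]=(0,a]$ for a single $a\in(0,K]$ — is the main obstacle: from a point $x_0$ in a gap $(c,d)\subset S^c\cap(0,K]$ one exits $(c,d)$ at $c$ or $d$, and uses the strong Markov property together with the monotonicity and convexity of $f=(K-\cdot)^+$ to compare $J(x_0,S)$ with $K-x_0$ and contradict \eqref{e3}; for the details I would defer to \cite[Lemma 6.11]{MR4116459}. Everything downstream is an explicit computation.

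For part (ii), fix $S=(0,a]$ with $a\le K$. On $S$ one has $J(\cdot,S)=f$, so \eqref{e4} holds with equality and the content is \eqref{e3} on $(a,\infty)$. For $x>a$, continuity of paths gives $\rho_{(0,a]}=\rho_{\{a\}}$, so $f(X_{\rho_{(0,a]}})=K-a$; combining $\frac1{1+\beta t}=\int_0^\infty e^{-s}e^{-\beta st}\,ds$ with the first-passage Laplace transform of the drifted Brownian motion $\log X$ (see \cite{MR1912205}),
\begin{equation*}
\E^x\big[e^{-\beta s\rho_{\{a\}}}\big]=\Big(\frac ax\Big)^{\alpha_s},\qquad \alpha_s:=\nu+\sqrt{\nu^2+2\beta s/\sigma^2},
\end{equation*}
whence $J(x,(0,a])=(K-a)\int_0^\infty e^{-s}(a/x)^{\alpha_s}\,ds$ and $\lambda=\int_0^\infty e^{-s}\alpha_s\,ds$ (matching \eqref{eq.eg3.nulambda}). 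Thus $S=(0,a]$ is a mild equilibrium iff $g(x):=J(x,(0,a])-(K-x)^+\ge0$ on $(a,\infty)$; for $x\ge K$ this is automatic, so only $x\in[a,K)$ matters, where $g(x)=(K-a)\int_0^\infty e^{-s}(a/x)^{\alpha_s}ds-(K-x)$ and $g(a)=0$. Since $x\mapsto x^{-\alpha_s}$ is strictly convex for every $s>0$, $g$ is strictly convex on $[a,K)$ with $g(a)=0$, so $g\ge0$ there iff $g'(a+)\ge0$; a short computation gives $g'(a+)=1-(K-a)\lambda/a$, and $g'(a+)\ge0\iff a(1+\lambda)\ge\lambda K\iff a\ge\frac{\lambda}{1+\lambda}K$, which is (ii).

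For part (iii), let $S$ be any mild equilibrium; by (i), $S\cap(0,K]=(0,a_S]$. For $x\in(a_S,K)$ the path stays in the component $(a_S,r)\subseteq S^c$ containing $x$ (so $r\ge K$) until it enters $S$, and it can do so only at $a_S$ (where $f=K-a_S$) or at $r\ge K$ (where $f=0$); hence $J(x,S)\le(K-a_S)\,\E^x[\delta(\rho_{\{a_S\}})]=J(x,(0,a_S])$. If $a_S<\frac{\lambda}{1+\lambda}K$, the computation in (ii) yields $J(x,(0,a_S])<K-x=f(x)$ for $x$ just above $a_S$, so $J(x,S)<f(x)$ with $x\notin S$, contradicting \eqref{e3}; thus $a_S\ge\frac{\lambda}{1+\lambda}K$ for every mild equilibrium, while $(0,\frac{\lambda}{1+\lambda}K]$ is itself mild by (ii). Therefore $\cap_{S\in\mathcal{E}}S=(0,\frac{\lambda}{1+\lambda}K]=:S^*$, which is an optimal mild equilibrium by Lemma \ref{lm.optimal.mild}. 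For uniqueness, strict convexity of $g$ at $a^*:=\frac{\lambda}{1+\lambda}K$ (where $g(a^*)=g'(a^*+)=0$) gives $J(x,S^*)>f(x)$ for $x\in(a^*,K)$, and $J(x,S^*)>0=f(x)$ for $x\ge K$ since $X$ reaches $a^*$ from above with positive probability; so $f<J(\cdot,S^*)$ strictly off $S^*$, and any optimal mild $R$ satisfies $J(\cdot,R)=J(\cdot,S^*)$ and $f=J(\cdot,R)$ on $R$, forcing $R\subseteq S^*$, hence $R=S^*$ since $S^*$ is the smallest mild equilibrium.
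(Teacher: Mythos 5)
The paper gives no proof for this lemma; it is stated as a transcription of Lemma 6.11, Corollary 6.13 and Proposition 6.15 of \cite{MR4116459}. You correctly recognize this and, in addition, supply a self-contained reconstruction, which is sound: the Laplace-transform identity $\E^x[e^{-\beta s\rho_{\{a\}}}]=(a/x)^{\alpha_s}$ with $\alpha_s=\nu+\sqrt{\nu^2+2\beta s/\sigma^2}$ is the correct one for downcrossing $\log X$; the resulting formula $J(x,(0,a])=(K-a)\int_0^\infty e^{-s}(a/x)^{\alpha_s}\,ds$, the strict-convexity reduction of \eqref{e3} on $(a,K)$ to $g'(a+)\ge0$, the computation $g'(a+)=1-(K-a)\lambda/a$, the domination $J(\cdot,S)\le J(\cdot,(0,a_S])$ on the component of $S^c$ to the right of $a_S$, and the uniqueness argument via $f<J(\cdot,S^*)$ strictly off $S^*$ are all correct. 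The one place you defer entirely to \cite{MR4116459} is worth spelling out: if $S\cap(0,K]$ were not a left interval, combining your $\inf S=0$ observation with closedness of $S$ produces a gap $(c,d)\subset(0,K)$ with $c,d\in S$; for $x_0\in(c,d)$, since $\mu\ge0$ makes $X$ a submartingale and $f=K-\cdot$ is affine decreasing on $[c,d]$, optional stopping gives $\E^{x_0}[f(X_{\tau_{(c,d)}})]\le f(x_0)$, and as $\delta(\tau_{(c,d)})<1$ a.s.\ while exit through $c$ (where $f>0$) has positive probability, $J(x_0,S)<f(x_0)$, contradicting \eqref{e3}. With that step made explicit your reconstruction is complete.
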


Following from Lemma \ref{lm.eg2}(ii), we shall call the mild equilibria that belong to the family $\{ (0,a]: a\geq \frac{\lambda}{1+\lambda}K\}$ are ``type I" mild equilibria.\footnote{It contains the trivial mild equilibrium $\X$ by setting $a=\infty$ and $\X=\X\cap(0,\infty]$.} The following proposition shows that, except ``type I" mild equilibria, all other mild equilibria take the same form: $(0,a]\cup D$ that satisfies a certain condition, and we shall call this family of mild equilibria ``type II" mild equilibria.

\begin{Proposition}\label{prop.eg2}
	Except the ``type I" mild equilibria in Lemma \ref{lm.eg2}(ii), all other mild equilibria take form: $(0,a]\cup D$ such that
	\be\label{eq.eg2.mild} 
	-(K-a)\int_0^\infty e^{-s}
	\left( \dfrac{\nu}{a}+\dfrac{\sqrt{\nu^2+2\beta s/\sigma^2}}{a}\cdot \dfrac{(b/a)^{\sqrt{\nu^2+2\beta s/\sigma^2}}+(a/b)^{\sqrt{\nu^2+2\beta s/\sigma^2}}}{(b/a)^{\sqrt{\nu^2+2\beta s/\sigma^2}}-(a/b)^{\sqrt{\nu^2+2\beta s/\sigma^2}}}\right)ds\geq -1,
	\ee
	where $D$ is a closed subset of $[K,\infty)$ and $b:=\inf\{x\in D\}$ satisfying $b>a$.
\end{Proposition}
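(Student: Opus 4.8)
The plan is to show that, for a set $S$ of the stated shape, being a mild equilibrium is equivalent to a single first-order (smooth-fit type) inequality at the left endpoint $a$ of the continuation region above $a$, and then to evaluate that inequality in closed form using the exit-time Laplace transform of geometric Brownian motion. \textbf{Step 1 (Reducing the shape of $S$).} By Lemma~\ref{lm.eg2}(i) every mild equilibrium satisfies $S\cap(0,K]=(0,a]$ for some $a\in(0,K]$, so $S=(0,a]\cup D$ with $D:=S\cap(K,\infty)$. If $D=\emptyset$ then $S=(0,a]$, which by Lemma~\ref{lm.eg2}(ii) is a mild equilibrium only when $a\geq\frac{\lambda}{1+\lambda}K$, i.e.\ $S$ is of ``type I''; hence any mild equilibrium not of type I has $D\neq\emptyset$. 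Assuming $a<K$ (the case $a=K$ is degenerate: then $(a,K)=\emptyset$ and \eqref{eq.eg2.mild} reads $0\geq-1$, and I would dispose of it separately), $D$ cannot accumulate at $K$, for otherwise $K\in\overline S=S$ would contradict $a<K$; thus $D$ is closed, $D\subset(K,\infty)\subset[K,\infty)$, and $b:=\inf D\in D$ satisfies $b\geq K>a$. Conversely, I must then prove that such an $S$ is a mild equilibrium exactly when \eqref{eq.eg2.mild} holds.

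\textbf{Step 2 (Reducing the equilibrium conditions).} Condition \eqref{e4} is automatic, since \eqref{e2} gives $\rho_S=0$ $\P^x$-a.s.\ for $x\in S$, so $J(x,S)=f(x)$ there. As $f\equiv0$ on $[K,\infty)$, \eqref{e3} holds trivially at every $x\in S^c\cap[K,\infty)$, so \eqref{e3} reduces to $K-x\leq J(x,S)$ for $x\in(a,K)$. Because $D\subset[b,\infty)$ and $b\in D$, for $X_0=x\in(a,b)$ one has $\rho_S=\tau_{(a,b)}$ and $f(X_{\tau_{(a,b)}})=(K-a)\mathbf{1}_{\{X_{\tau_{(a,b)}}=a\}}$ (using $f(b)=0$); hence $\psi(x):=J(x,S)=(K-a)\,\E^x[\delta(\tau_{(a,b)})\mathbf{1}_{\{X_{\tau_{(a,b)}}=a\}}]$ on $(a,b)$. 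Writing $\delta(t)=\int_0^\infty e^{-s}e^{-\beta st}\,ds$ and applying Fubini, $\psi(x)=(K-a)\int_0^\infty e^{-s}g(x,s)\,ds$, where $g(x,s)=\E^x[e^{-\beta s\tau_{(a,b)}}\mathbf{1}_{\{X_{\tau_{(a,b)}}=a\}}]$ solves $\tfrac12\sigma^2x^2g_{xx}+\mu xg_x=\beta s\,g$ on $(a,b)$ with $g(a,s)=1$, $g(b,s)=0$; by the exit-time Laplace transform for geometric Brownian motion (the analogue for \eqref{e23} of the formulas used in \eqref{e35}--\eqref{e35'}, cf.\ \cite{MR1912205}) one gets $g(x,s)=(a/x)^{\nu}\,\tfrac{\sinh(\kappa_s\ln(b/x))}{\sinh(\kappa_s\ln(b/a))}$ with $\kappa_s:=\sqrt{\nu^2+2\beta s/\sigma^2}$ and $\nu$ as in \eqref{eq.eg3.nulambda}. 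In particular $\psi\in\Cc^2((a,b))$ with $\psi(a+)=K-a$, $\psi(b-)=0$, and (differentiating under the integral, justified by the $e^{-s}$ factor) $\mathcal L_0\psi:=\mu x\psi'+\tfrac12\sigma^2x^2\psi''=(K-a)\int_0^\infty e^{-s}\beta s\,g(x,s)\,ds\geq0$ on $(a,b)$.

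\textbf{Step 3 (First-order reduction and explicit form).} Next I would use a scale-function convexity argument. Let $p$ be a scale function of $X$ (so $\mathcal L_0p=0$, $p'>0$); recall that the sign of $\mathcal L_0v$ coincides with that of the second derivative of $v$ read in the scale coordinate $\xi=p(x)$. Since $\mu\geq0$ we have $\mathcal L_0(K-x)=-\mu x\leq0$, so $\widetilde\phi:=\psi-(K-x)$ satisfies $\mathcal L_0\widetilde\phi\geq0$ on $(a,b)$, i.e.\ $\widetilde\phi$ is convex in the scale coordinate, with $\widetilde\phi(a+)=0$ and $\widetilde\phi(b-)=b-K\geq0$. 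A convex function vanishing at the left endpoint and nonnegative at the right endpoint is nonnegative throughout the interval if and only if its right-hand derivative at the left endpoint is nonnegative; translating back (and noting $\widetilde\phi\geq0$ already on $[K,b)$ since $\psi\geq0$ there), the inequality $K-x\leq\psi(x)$ on $(a,K)$ is equivalent to $\widetilde\phi'(a+)=\psi'(a+)+1\geq0$. Combining with Step 2, $S$ is a mild equilibrium iff $\psi'(a+)\geq-1$. Finally, $g_x(a+,s)=-\tfrac1a\bigl(\nu+\kappa_s\coth(\kappa_s\ln(b/a))\bigr)$ with $\coth(\kappa_s\ln(b/a))=\tfrac{(b/a)^{\kappa_s}+(a/b)^{\kappa_s}}{(b/a)^{\kappa_s}-(a/b)^{\kappa_s}}$, so $\psi'(a+)=(K-a)\int_0^\infty e^{-s}g_x(a+,s)\,ds$, and the condition $\psi'(a+)\geq-1$ is precisely \eqref{eq.eg2.mild}.

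I expect the main obstacle to be Step 3: one must confirm that $\psi$ is genuinely $\Cc^2$ on $(a,b)$ with the stated one-sided boundary values and derivative (handled by the explicit formula for $g$), that differentiation under the integral sign is legitimate (uniform-in-$s$ decay from the $e^{-s}$ factor together with boundedness of the hyperbolic ratios for $s$ bounded away from $0$), and --- the conceptual point --- that the hypothesis $\mu\geq0$ is exactly what makes $\widetilde\phi$ convex in the scale coordinate, so that no interior check beyond the endpoint derivative is required. The remaining work (the closedness/accumulation remarks and the $a=K$ degeneracy in Step 1, and the closed-form differentiation of $g$ in Steps 2--3) is routine.
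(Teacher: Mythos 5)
Your proof is correct, and it takes a genuinely different route in the middle step (reducing the global inequality $J(\cdot,S)\geq f$ on $(a,K)$ to the one-sided derivative condition $J'(a+,S)\geq -1$). The paper computes $J$, $J'$, and $J''$ explicitly from the Borodin--Salminen Laplace-transform formula and verifies by inspecting the integrands that $J'<0$ and $J''>0$ on $(a,b)$; the ordinary convexity of $J$ plus tangency with the line $K-x$ at $x=a$ then gives the first-order reduction. You instead avoid computing $J''$ altogether: from the ODE $\frac12\sigma^2x^2g_{xx}+\mu x g_x=\beta s\,g$ you read off $\mathcal L_0\psi\geq 0$, and since $\mathcal L_0(K-x)=-\mu x\leq 0$ (here $\mu\geq 0$ is used), the difference $\widetilde\phi=\psi-(K-x)$ has $\mathcal L_0\widetilde\phi\geq 0$, i.e.\ is convex in the scale coordinate $\xi=p(x)$; because $\widetilde\phi(a+)=0$, the same ``convex and vanishing at the left endpoint'' reasoning applies in $\xi$. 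This sidesteps the paper's somewhat delicate positivity check for the integrand of $J''$ (which involves verifying $(2\nu^2+\nu+2\beta s/\sigma^2)+(2\nu+1)\sqrt{\nu^2+2\beta s/\sigma^2}>0$), and it isolates the role of $\mu\geq 0$ as the single hypothesis making $\widetilde\phi$ scale-convex. Two small remarks: (i) the condition ``$\widetilde\phi(\xi_b)\geq 0$'' in your convexity lemma is never actually needed for the equivalence you use on $(a,K)$; (ii) when you handle the degenerate case $a=K$ you should note (as the paper does) that the whole set $S$ is then trivially a mild equilibrium because $f\equiv 0\leq J$ on $S^c$, so that both sides of the claimed equivalence hold vacuously, matching $-(K-a)\,(\cdots)=0\geq -1$.
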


\begin{proof}
	Lemma \ref{lm.eg2}(i)(ii) together imply that any mild equilibrium is either of type I or takes the form: $(0,a]\cup D$ with $D$ being a closed subset of $[K,\infty)$. Consider a closed set of such form $S=(0,a]\cup D$ with $b:=\inf\{x\in D\}>a$. When $a\geq K$, the fact that $f=0$ on $[K,\infty)$ immediately gives that $S$ is a mild equilibrium. Notice that $-(K-a)\geq 0$ and the integrand in the LHS of \eqref{eq.eg2.mild} is positive, so \eqref{eq.eg2.mild} holds.
	
	When $a<K$, we have
	\begin{align*}
		J(x,S)&=(K-a)\int_0^\infty \frac{p(t)}{1+\beta t} dt=(K-a)\int_0^\infty e^{-s}\E^x[e^{-\beta s \tau_{(a,b)}}\cdot 1_{\{X_{\tau_{(a,b)}}=a\}}]ds\\
		&=  (K-a)\int_0^\infty e^{-s}\left(\frac{a}{x}\right)^\nu \dfrac{(b/x)^{\sqrt{\nu^2+2\beta s/\sigma^2}}-(x/b)^{\sqrt{\nu^2+2\beta s/\sigma^2}}}{(b/a)^{\sqrt{\nu^2+2\beta s/\sigma^2}}-(a/b)^{\sqrt{\nu^2+2\beta s/\sigma^2}}}ds,\quad \forall x\in (a,b),
	\end{align*}
	where $p(t):=\P^x(\tau_{(a,b)}\in dt,\, X_{\tau_{(a,b)}}=a)$, and the second line above follows from \cite[3.0.5 (a) on page 633]{MR1912205}. Direct calculations show that for any $x\in (a,b)$
	\be\label{eq.eg2.deriv}
		\begin{aligned}
			J'(x, S)=& -(K-a)\int_0^\infty e^{-s}\Bigg( \frac{a^\nu \nu}{x^{\nu+1}}\cdot \dfrac{(b/x)^{\sqrt{\nu^2+2\beta s/\sigma^2}}-(x/b)^{\sqrt{\nu^2+2\beta s/\sigma^2}}}{(b/a)^{\sqrt{\nu^2+2\beta s/\sigma^2}}-(a/b)^{\sqrt{\nu^2+2\beta s/\sigma^2}}}\\
			&+\frac{a^\nu \sqrt{\nu^2+2\beta s/\sigma^2}}{x^{\nu+1}}\cdot \dfrac{(b/x)^{\sqrt{\nu^2+2\beta s/\sigma^2}}+(x/b)^{\sqrt{\nu^2+2\beta s/\sigma^2}}}{(b/a)^{\sqrt{\nu^2+2\beta s/\sigma^2}}-(a/b)^{\sqrt{\nu^2+2\beta s/\sigma^2}}}\Bigg)ds,
		\end{aligned}
\ee
	\be\label{eq.eg2.deriv'}
\begin{aligned}
			J''(x, S)=&(K-a)\int_0^\infty e^{-s}\Bigg(\frac{a^\nu (2\nu^2+\nu+2\beta s/\sigma^2)}{x^{\nu+2}} \cdot \dfrac{(b/x)^{\sqrt{\nu^2+2\beta s/\sigma^2}}-(x/b)^{\sqrt{\nu^2+2\beta s/\sigma^2}}}{(b/a)^{\sqrt{\nu^2+2\beta s/\sigma^2}}-(a/b)^{\sqrt{\nu^2+2\beta s/\sigma^2}}}\\
			&+ \frac{a^\nu (2\nu+1)\sqrt{\nu^2+2\beta s/\sigma^2}}{x^{\nu+2}} \cdot \dfrac{(b/x)^{\sqrt{\nu^2+2\beta s/\sigma^2}}+(x/b)^{\sqrt{\nu^2+2\beta s/\sigma^2}}}{(b/a)^{\sqrt{\nu^2+2\beta s/\sigma^2}}-(a/b)^{\sqrt{\nu^2+2\beta s/\sigma^2}}} \Bigg)ds.
		\end{aligned}
	\ee
	Recall $\nu$ in \eqref{eq.eg3.nulambda}, we have that
	$$
	\nu + \sqrt{\nu^2+2\beta s/\sigma^2}>0 \quad \text{and}\quad (2\nu^2+\nu+2\beta s/\sigma^2)+\left((2\nu+1)\sqrt{\nu^2+2\beta s/\sigma^2}\right)>0.
	$$
	This together with 
	$$0<(b/x)^{\sqrt{\nu^2+2\beta s/\sigma^2}}-(x/b)^{\sqrt{\nu^2+2\beta s/\sigma^2}}<(b/x)^{\sqrt{\nu^2+2\beta s/\sigma^2}}+(x/b)^{\sqrt{\nu^2+2\beta s/\sigma^2}}$$
	implies that
	both the integrands on the RHS of \eqref{eq.eg2.deriv} and \eqref{eq.eg2.deriv'} are positive. Therefore, $J'(x,S)<0$ and $J''(x,S)>0$ for $x\in (a,b)$, and thus $J(x,S)$ is strictly decreasing and convex on $(a,b)$. This together with the shape of $f$ on $(a,b)$ indicates that $S$ is a mild equilibrium if and only if $J'(a+,S)\geq -1$. From \eqref{eq.eg2.deriv}, we have
	\bee 
	J'(a+, S)=-(K-a)\int_0^\infty e^{-s}
	\left( \dfrac{\nu}{a}+\dfrac{\sqrt{\nu^2+2\beta s/\sigma^2}}{a}\cdot \dfrac{(b/a)^{\sqrt{\nu^2+2\beta s/\sigma^2}}+(a/b)^{\sqrt{\nu^2+2\beta s/\sigma^2}}}{(b/a)^{\sqrt{\nu^2+2\beta s/\sigma^2}}-(a/b)^{\sqrt{\nu^2+2\beta s/\sigma^2}}}\right)ds,
	\eee
	so $S$ is a mild equilibrium if and only if \eqref{eq.eg2.mild} holds.
	Notice that $J'(a+, S)$ converges to $0$ when $a\nearrow K$. Then for any $b>K$, by the continuity of function $a\mapsto J'(a+, S)$, there exists a constant $a_b<K$ such that for all $a\in[a_b, K)$, \eqref{eq.eg2.mild} indeed holds and $S$ is a mild equilibrium. 
\end{proof}

\begin{Proposition}\label{prop.eg2.weakstrong}
	$S^*=(0,\frac{\lambda}{1+\lambda}K]$ is the unique weak and the unique strong equilibrium.
\end{Proposition}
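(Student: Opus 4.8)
The plan is to establish both uniqueness claims by combining the structural classification of mild equilibria (Lemma~\ref{lm.eg2} and Proposition~\ref{prop.eg2}) with the weak-equilibrium characterization of Theorem~\ref{prop.weak.cha} and the strong-from-weak criterion of Theorem~\ref{thm.weak.strong}. First I would show $S^*=(0,\frac{\lambda}{1+\lambda}K]$ \emph{is} weak: by Lemma~\ref{lm.v.c12}(a), $\Lc V(0,x\pm,S^*)=0$ on $(S^*)^c$, and on $S^*\cap(0,K)$ one has $V(t,x,S^*)=\delta(t)f(x)=\delta(t)(K-x)$ so $\Lc V(0,x,S^*)=(-\beta+\mu)x-\mu K\cdot 0$; a direct computation gives $\Lc(\delta f)(0,x)=-\beta(K-x)+\mu(-x)x'\dots$ — more carefully, $\Lc V(0,x,S^*)=\delta'(0)(K-x)+\mu x\cdot(-1)=-\beta(K-x)-\mu x\le 0$ for $x\le K$ since $\mu\ge 0$. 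So \eqref{eq.weak.equicha2} holds inside $S^*$, it holds trivially outside, and \eqref{eq789} follows from $S^*$ being mild (Lemma~\ref{lm.eg2}(iii)). For \eqref{eq.weak.equicha1} at the boundary point $a^*:=\frac{\lambda}{1+\lambda}K$, I would compute $J'(a^*-,S^*)$ using the single-barrier hitting formula (as in \eqref{e35}/\eqref{eq.eg2.deriv} with $b=\infty$, which kills the $(a/b)$ terms) and check that $J'(a^*-,S^*)\le f'(a^*-)=-1\le f'(a^*+)$; the defining equation of $\lambda$ in \eqref{eq.eg3.nulambda} is exactly the smooth-fit/equality case $J'(a^*-,S^*)=-1$, so \eqref{eq.weak.equicha1} holds with equality. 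Then $S^*$ is weak by Theorem~\ref{prop.weak.cha}. Strongness of $S^*$ is already Proposition~\ref{prop.eg2.weakstrong}'s companion fact established in Section~\ref{subsec:eg.GBM}: compute $\Sk_{S^*}$ from \eqref{eq.define.Sk}; since $\Lc V(0,x,S^*)=-\beta(K-x)-\mu x<0$ strictly on all of $S^*\cap(0,K)$ (and $=-\beta K<0$ on $S^*\cap[K,\infty)$ if that part is nonempty, but here $a^*<K$ so $S^*\subset(0,K)$), every point of $S^*$ lies in $\Sk_{S^*}$, hence $S^*=\Sk_{S^*}$, and Theorem~\ref{thm.weak.strong} gives strongness.

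For uniqueness, the key point is that any weak equilibrium is mild, hence by Lemma~\ref{lm.eg2} and Proposition~\ref{prop.eg2} is either of ``type I'' ($(0,a]$ with $a\ge a^*$) or of ``type II'' ($(0,a]\cup D$ with $D\subset[K,\infty)$ closed, $b:=\inf D>a$, satisfying \eqref{eq.eg2.mild}). I would rule out every such $S$ other than $S^*$ using \eqref{eq.weak.equicha1} at the boundary point $a$ of the lower component. For a type-I equilibrium $(0,a]$ with $a>a^*$, I would compute $J'(a-,(0,a])$ from the single-barrier formula; the integrand $\nu+\sqrt{\nu^2+2\beta s/\sigma^2}$ and the factor $(K-a)/a$ are positive, and as $a$ increases above $a^*$ the value $J'(a-,(0,a])$ is \emph{strictly less} than $-1$ (one checks $a\mapsto J'(a-,(0,a])$ is decreasing past $a^*$, essentially because $(K-a)(\text{positive})/a$ combined with the $\nu+\sqrt{\cdot}$ weight — this is the same monotonicity that makes $a^*$ the smallest admissible threshold). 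Since $f'(a-)=-1=f'(a+)$, this violates \eqref{eq.weak.equicha1} (left derivative of $V$ must be $\ge$ right derivative). Actually one must be careful about the sign: \eqref{eq.weak.equicha1} wants $V_x(0,a-,S)\ge V_x(0,a+,S)$; with $V_x(0,a+,(0,a])=f'(a+)=-1$, we need $J'(a-,(0,a])\ge -1$, so $J'(a-,(0,a])<-1$ is the violation. I would make this quantitative by writing $J'(a-,(0,a])=-(K-a)\int_0^\infty e^{-s}\frac{1}{a}(\nu+\sqrt{\nu^2+2\beta s/\sigma^2})ds$ and showing this is $<-1$ iff $a>a^*$, which is immediate from the definition of $\lambda$ once one recognizes $J'(a^*-,(0,a^*])=-1$ and the explicit $a$-dependence. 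For a type-II equilibrium, the term $\frac{(b/a)^{\sqrt{\cdot}}+(a/b)^{\sqrt{\cdot}}}{(b/a)^{\sqrt{\cdot}}-(a/b)^{\sqrt{\cdot}}}>1$ makes $|J'(a+,S)|$ strictly larger than in the single-barrier case with the same $a$, so $J'(a+,S)<-1$ unless $a$ is pushed even closer to $K$ than $a^*$; in any case $a\ge a^*$ is forced by $S\supset S^*$... no — rather, one uses \eqref{eq.weak.equicha1} at $x=a$: $V_x(0,a-,S)=J'(a-,S)$ computed from the lower barrier alone (since $(0,a]$'s left side only sees barrier $a$), which equals $-(K-a)\frac{1}{a}\int e^{-s}(\nu+\sqrt{\cdot})ds$, the \emph{same} expression as type I, so again $\ge -1$ forces $a\le a^*$; but type-II mild equilibria with $a\le a^*$ would need to also satisfy \eqref{eq.eg2.mild}, and at the \emph{inner} boundary point of $D$, namely $b=\inf D\ge K$, one checks \eqref{eq.weak.equicha1} fails because $V_x(0,b-,S)>0=f'(b+)$ is needed but the convexity/monotonicity of $J(\cdot,S)$ on $(a,b)$ established in Proposition~\ref{prop.eg2} gives $J'(b-,S)<0$, contradicting $J'(b-,S)=V_x(0,b-,S)\ge V_x(0,b+,S)=0$. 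Wait — I should double-check the sign at $b$: on $[K,\infty)$, $f\equiv 0$ so $f'(b+)=0$ and $V_x(0,b+,S)=0$; we need $V_x(0,b-,S)\ge 0$, but $J(\cdot,S)>0=f$ on $(a,b)$ with $J(b-,S)=f(b)=0$ forces $J'(b-,S)\le 0$, and strict convexity forces $J'(b-,S)<0$, contradiction. So no type-II weak equilibrium exists, and among type-I only $S^*$ survives.

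Assembling: the only weak equilibrium is $S^*$, hence (since strong $\subset$ weak) the only candidate for a strong equilibrium is $S^*$, which we showed \emph{is} strong; so $S^*$ is the unique weak and the unique strong equilibrium. The main obstacle I anticipate is the monotonicity claim for $a\mapsto J'(a-,(0,a])$ and, more delicately, verifying that a putative type-II equilibrium cannot satisfy \eqref{eq.weak.equicha1} simultaneously at $x=a$ and at $x=b$ — one has to handle the boundary point $b=K$ separately (where $f$ is not differentiable, $f'(K-)=-1\ne 0=f'(K+)$) versus $b>K$ (where $f'(b\pm)=0$), but in both subcases the strict convexity and strict negativity of $J'(\cdot,S)$ on $(a,b)$ from Proposition~\ref{prop.eg2} delivers the contradiction cleanly. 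A secondary technical point is confirming admissibility of all the equilibria in play so that Theorems~\ref{prop.weak.cha}, \ref{thm.weak.strong} apply — but type-I sets $(0,a]$ and $S^*$ are plainly admissible, and any type-II set with the stated form is admissible or has an admissible alternative as noted in Remark~\ref{rm:eg.admissible}.
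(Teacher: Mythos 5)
Your overall strategy matches the paper's exactly: classify weak equilibria via Lemma \ref{lm.eg2} / Proposition \ref{prop.eg2}, use the boundary conditions of Theorem \ref{prop.weak.cha} (smooth fit, Corollary \ref{cor.boundary.smoothfit}) to pin down $a = a^* := \frac{\lambda}{1+\lambda}K$ among type-I sets and to eliminate all type-II sets at $x=b$, then obtain strongness via $\Sk_{S^*}=S^*$ and Theorem \ref{thm.weak.strong}. However, two concrete issues remain.

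First, your account of \eqref{eq.weak.equicha1} at $x=a$ has the sides reversed and the direction of the inequality wrong. Since $(0,a)\subset S^\circ$ and $(a,\infty)\subset S^c$, the correct reading is $V_x(0,a-,S)=f'(a-)=-1$ and $V_x(0,a+,S)=J'(a+,S)$; you repeatedly write $J'(a-,\cdot)$ where the quantity of interest is $J'(a+,\cdot)$ and claim $V_x(0,a+,(0,a])=f'(a+)$. Condition \eqref{eq.weak.equicha1} at $a$ therefore reads $J'(a+,S)\le -1$, and combined with \eqref{eq789} (which forces $J'(a+,S)\ge -1$) one gets $J'(a+,S)=-1$. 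With $J'(a+,(0,a])=-\frac{K-a}{a}\lambda$, raising $a$ above $a^*$ makes $J'(a+,(0,a])$ \emph{increase} above $-1$ (not decrease below $-1$ as you assert), and that increase is what violates $J'(a+,S)\le -1$. The conclusion is right but the stated mechanism is inverted.

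Second, your type-I ruling-out argument only works for $a<K$, because the sign of the factor $(K-a)/a$ and the applicability of smooth fit at $a$ both rely on $a<K$. Candidates with $a\ge K$ (including $S=\X$, which the paper's footnote classifies as type I) escape this argument entirely: for $a>K$ one even has $f'(a)=0=J'(a+,(0,a])$, so smooth fit holds trivially at $x=a$. The paper closes this hole by first showing no weak equilibrium can contain $K$: if $K\in S$ then $(0,K]\subset S$ by Lemma \ref{lm.eg2}(i), whence $V_x(0,K-,S)=-1 < 0 \le V_x(0,K+,S)$, a direct violation of \eqref{eq.weak.equicha1} at $x=K$. You should add this step; it also forces $a<K$ for type-II sets, which your argument at $b$ (essentially correct, modulo noting $V_x(0,b+,S)\ge 0$ rather than pinning it to $f'(b+)$ when $b$ is isolated in $D$) tacitly uses via the positivity of $K-a$.

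Your verification that $S^*$ is weak and the passage to strongness via $\Sk_{S^*}=S^*$ and Theorem \ref{thm.weak.strong}, together with the final ``strong implies weak, so unique weak gives unique strong'' assembly, all mirror the paper.
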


\begin{proof}
	We first find all weak equilibria. Since a weak equilibrium is also mild, by Proposition \ref{prop.eg2}, it is sufficient to select weak equilibria from the two types of mild equilibria. Given a mild equilibrium $S$ that is weak, no matter which type it is, $S$ must not contain $K$. Otherwise, by Lemma \ref{lm.eg2}(i), $(0,K]\subset S$, which together with $f=0$ on $[K,\infty)$ implies that
	$$
	V_x(0,K-,S)=-1<0=V_x(0,K+,S),
	$$
	which contradicts \eqref{eq.weak.equicha1} in Theorem \ref{prop.weak.cha}. 
	
	 Consider an arbitrary type I mild equilibrium $(0,a]$ with $\frac{\lambda}{1+\lambda}K\leq a<K$. By the smooth-fit condition in Corollary \ref{cor.boundary.smoothfit}, $S=(0,a]$ is a weak equilibrium if and only if 
	$$V_x(0,a+,(0,a])=J'(a+, (0,a])=-1.$$
	From the calculation in the proof of Lemma 6.12 in \cite{MR4116459}, such condition is satisfied if and only if $a=\frac{\lambda }{1+\lambda}K$. Hence, $S^*=(0,\frac{\lambda }{1+\lambda}K]$ is the only weak equilibrium among the type I mild equilibria. Now pick any type II mild equilibrium $S=(0,a]\cup D$ with $b:=\inf\{x\in D\}$. As $K\notin S$, we have $a<K$. Then by \eqref{eq.eg2.deriv}, we have
	$$
	V_x(0,b-,S)=-\frac{K-a}{b^{\nu+1} }\int_0^\infty e^{-s}
	\dfrac{2a^\nu \sqrt{\nu^2+2\beta s/\sigma^2}}{(b/a)^{\sqrt{\nu^2+2\beta s/\sigma^2}}-(a/b)^{\sqrt{\nu^2+2\beta s/\sigma^2}}}ds<0=V_x(0,b+,S).
	$$
	That is, the smooth-fit condition fails at the boundary $x=b$, and hence $S$ is not weak. In sum, $S^*=(0,\frac{\lambda }{1+\lambda}K]$ is the unique weak equilibrium.
	
	Finally, a direct calculation shows that 
	$$\Lc V(0, x-, S^*)=-\beta (K-x)-\mu x<0,\quad \forall x\in \left(0, \frac{\lambda }{1+\lambda}K\right],$$
	so $S^*=\Sk_{S^*}$. Then, by Theorem \ref{thm.weak.strong} and the fact that $S^*$ is the unique weak equilibrium, we can conclude that $S^*$ is the unique strong equilibrium.
\end{proof}

\begin{Remark}\label{rm:eg.admissible}
Within this example, we do not restrict equilibria to be admissible. The unique weak, strong, optimal mild equilibrium $(0,\frac{\lambda}{1+\lambda}K]$ turns out to be indeed admissible. Moreover, type I mild equilibria are all admissible, while any type II mild equilibrium $S=(0,a]\cup D$ with $b:= \inf\{x\in D\}>a$ has an alternative $(0,a]\cup [b,\infty)$, which share the same $J$ value and is admissible.
\end{Remark}

\appendix
\section{Proof for Results in Section \ref{sec:notations}}\label{sec:appendix}

\begin{proof}[{\bf Proof of \eqref{e2} in Remark \ref{rm.close.regular}}]
	Let $X_0=x\in\X$ and $h>0$ be small enough such that $[x-h,x+h]\subset\X$. Let $Y=(Y_t)_{t\geq 0}$ follows $dY_t=\hat\mu(Y_t)dt+\hat\sigma(Y_t)dW_t$ with $Y_0=x$,
	where
	$$\hat\mu(y):=\begin{cases}
		\mu(y),& x-h\leq y\leq x+h,\\
		\mu(x-h),& y<x-h,\\
		\mu(x+h),&y>x+h,\end{cases}\quad\text{and}\quad \hat\sigma(y):=\begin{cases}
		\sigma(y),& x-h\leq y\leq x+h,\\
		\sigma(x-h),& y<x-h,\\
		\sigma(x+h),&y>x+h.\end{cases}
	$$
	Then by \cite[Lemma A.1]{MR4067078}, for any $t>0$,
	$$\P^x\left(\max_{0\leq s\leq t}Y_s>x\right)=\P^x\left(\min_{0\leq s\leq t}Y_s<x\right)=1.$$
	Note that $Y_s=X_s$ for $s\leq\tau_{B(x,h)}$. Then for a.s. $\omega\in\{\tau_{B(x,h)}>1/n\}$,
	$$\max_{0\leq s\leq t}X_s(\omega)>x\quad\text{and}\quad\min_{0\leq s\leq t}X_s(\omega)<x,\quad\forall\,t\in(0,1/n)\ \text{and thus}\ \forall\, t>0.$$
	Then \eqref{e2} follows from the arbitrariness of $n\in\mathbb{N}$.	

\end{proof}

\begin{proof}[{\bf Proof of Lemma \ref{lm.delta.inequ}}]
By \eqref{eq.assume.DI}, for any $t,r\geq 0$
$$
\delta(t+r)-\delta(t)\geq \delta(t)(\delta(r)-\delta(0)),
$$
This together with the differentiability of $\delta(t)$ implies that
$
\delta'(t)\geq \delta(t)\delta'(0).
$
As $\delta'(t)\leq 0$, 
$$
1-\delta(t)=\int_0^t -\delta'(s)ds\leq \int_0^t -\delta(s)\delta'(0)ds\leq \int_0^t |\delta'(0)|ds=|\delta'(0)|t.
$$
\end{proof}

\begin{proof}[{\bf Proof of Lemma \ref{rm.suff.newassume}}] Take an admissible stopping policy $S$. Let $a,b\in \X$ such that $[a,b]\subset\X$ and $(a,b)\subset S^c$. Throughout the proof, $C>0$ will serve as a generic constant that may change from one line to another and is independent of $r$.

Set $v(x,r,S):=\E^x[e^{-r \rho_S}f(X_{\rho_S})]$. We first provide an estimate for $|v_x(x,r,S)|+|v_{xx}(x,r,S)|$ on $[a,b]$. Assumption \ref{assume.x.elliptic}(i) and the boundedness of $f$ gives the well-posedness of $v(\cdot, r, S)$ for all $r\geq 0$, 
	and
	\be\label{eq.bound.vr}
	\sup_{x\in[a,b], r\geq 0} |v(x,r,S)|\leq \sup_{x\in[a,b]} |v(x,0,S)|\leq C.
	\ee
	For an arbitrary $r\geq 0$, by a standard probabilistic argument,
	one can derive that $v(x,r,S)\in \Cc^2([a,b])$ satisfies the following elliptic equation 
	\be\label{eq.r.elliptic} 
	\begin{cases}
		-ru(x)+\mu(x)u'(x)+\frac{1}{2}\sigma^2(x)u''(x)=0, \quad x\in (a,b),\\
		u(a)=v(a,r,S), \; u(b)=v(b,r,S).
	\end{cases}
	\ee 
	Recall the strictly increasing function $y=\phi(x)$ defined in \eqref{eq.lm.change} and denote by $\phi^{-1}$ the inverse function of $\phi$. Define function $\tilde{u}(y):=u(\phi^{-1}(y))$ (i.e., $u(x)=\tilde{u}(\phi(x))$) on $[\phi(a), \phi(b)]$. Then $\tilde{u}\in \Cc^2([\phi(a), \phi(b)])$, and \eqref{eq.r.elliptic} leads to 
	\be\label{eq.tildeu.elliptic} 
	\begin{cases}
		-r\tilde{u}(y)+\frac{1}{2}\tilde{\sigma}^2(y)\tilde{u}''(y)=0, \quad y\in (\phi(a),\phi(b)),\\
		\tilde{u}(\phi(a))=v(a,r,S), \; \tilde{u}(\phi(b))=v(b,r,S),
	\end{cases}
	\ee
	with $\tilde{\sigma}(y):=\sigma(\phi^{-1}(y))\phi'(\phi^{-1}(y))$. 
	Then \eqref{eq.bound.vr} together with the maximum principle implies that
	$$
	\sup_{y\in (\phi(a),\phi(b))} |\tilde{u}(y)|\leq v(a,r,S)\vee v(b,r,S)\leq C \qquad \forall r\geq 0.
$$
	This together with the fact that $\tilde{u}''=2\tilde{u}r /(\tilde{\sigma}^2)$ and uniform ellipticity of $\tilde{\sigma}$ on $(\phi(a),\phi(b))$ leads to
\be\label{eq.u''.bound}
	\sup_{y\in (\phi(a),\phi(b))} |\tilde{u}''(y)|\leq r C \qquad \forall r\geq 0.
\ee	
By the mean value theorem, there exists $y_0\in (\phi(a),\phi(b))$ such that 
\be\label{eq.mvt} 
|\tilde{u}'(y_0)|=\left|\frac{v(b,r,S)-v(a,r,S)}{\phi(b)-\phi(a)}\right|\leq \frac{2C}{\phi(b)-\phi(a)} \qquad \forall r\geq 0.
\ee
Then by \eqref{eq.u''.bound} and \eqref{eq.mvt}, for any $y\in (\phi(a), \phi(b))$ and $r\geq 0$, we have
$$
|\tilde{u}'(y)|\leq |\tilde{u}'(y_0)|+\int_{y_0}^y |\tilde{u}''(l)|dl\leq \frac{2C}{\phi(b)-\phi(a)}+\int_{\phi(a)}^{\phi(b)}rCdl=\frac{2C}{\phi(b)-\phi(a)}+rC(\phi(b)-\phi(a)).
$$
Therefore,
	$$
	\sup_{y\in (\phi(a),\phi(b))} (|\tilde{u}'(y)|+|\tilde{u}''(y)|)\leq C(1+r)\qquad \forall r\geq 0.
	$$
	This together with the fact that 
	$$\sup_{x\in(a,b)} (|u'(x)|+|u''(x)|)\leq\sup_{y\in (\phi(a),\phi(b))} (|\tilde{u}'(y)|+|\tilde{u}''(y)|)\cdot \sup_{x\in (a,b)}(|\phi'(x)|+|\phi'(x)|^2+|\phi''(x)|)$$
	 implies
	\be\label{eq.vrx.finite}
	\sup_{x\in(a,b)} (|v_x(x,r,S)|+|v_{xx}(x,r,S)|)=\sup_{x\in(a,b)} (|u'(x)|+|u''(x)|)\leq C(1+r) \qquad \forall r\geq 0.
	\ee	 
	
	Next, we verify that $V\in \Cc^{1,2}([0,\infty)\times [a,b])$. For any $r\geq 0$, $v_x(a+,r,S)$, $v_x(b-,r,S)$ (resp. $v_{xx}(a+,r,S)$, $v_{xx}(b-,r,S)$) all exist and satisfy the same bound as the RHS of \eqref{eq.vrx.finite}. Hence, we conclude that $v(x,r,S)\in \Cc^2([a,b])$. By Fubini theorem, \eqref{eq.weight.delta} leads to 
	\be\label{eq.lm.fubini} 
	V(t,x,S)=\int_0^\infty e^{-rt}v(x,r,S)dF(r)\quad \forall x\in\X.
	\ee 
	This, together with \eqref{eq.vrx.finite} and the assumption $\int_0^\infty rdF(r)<\infty$, implies that $V\in \Cc^{1,2}([0,\infty)\times [a,b])$. 
	
	Finally, we prove \eqref{eq.vx.lineart} for $V_x(t,x+,S)$ (the verification for $V_x(t,x-,S)$ is similar and thus omitted). For $(t,x)\in [0,\infty)\times [a,b)$, by \eqref{eq.vrx.finite}, \eqref{eq.lm.fubini} and the assumption $\int_0^\infty rdF(r)<\infty$,
\begin{align}\label{eq.vx.s} 
V_x(t,x+,S)=\int_0^\infty e^{-rt}v_x(x+,r,S)dF(r). 
\end{align}	
Now take any $x\in\X$. If there exists some $h>0$ such that $(x,x+h)\subset S^c$, then by \eqref{eq.weight.at}, \eqref{eq.vrx.finite} and \eqref{eq.vx.s}, we have that
	\begin{align*}
		|V_x(t,x+,S)-V_x(0,x+,S)|&\leq  \int_0^\infty|v_x(x,r,S)| |e^{-rt}-1|dF(r)\\
		&\leq C \int_0^\infty (1+r)(1-e^{-rt})dF(r)=o(\sqrt{t}),\ \text{as}\ t\to0.
	\end{align*}
	Otherwise, since $S$ is admissible, there exists some $\bar{h}>0$ such that $(x,x+\bar{h})\subset S$. Then 
	$$|V_x(t,x+,S)-V_x(0,x+,S)|=|\delta(t)f(x)-\delta(0)f(x)|\leq|\delta'(0)|tf(x)=o(\sqrt{t}),\ \text{as}\ t\to0,$$
	where the above inequality follows from Lemma \ref{lm.delta.inequ}. In sum, \eqref{eq.vx.lineart} holds for $V_x(t,x+,S)$.
	\end{proof}

\begin{proof}[{\bf Proof of Lemma \ref{lm.v.c12}}]
	{\bf Part (a):} Assumption \ref{assume.new} guarantees that $V(t,x,S)\in \Cc^{1,2}([0,\infty)\times \overline{S^c})$. \eqref{e2} implies that $\P^{x}(\rho_S=0)=1$ for any $x\in S$, so $V(t,x,S)=\delta(t)f(x)$ for any $(t,x)\in [0,\infty)\times S$. Now we prove that
	\be\label{eq.lm.parta} 
	\Lc V(t,x,S)\equiv 0\quad \forall (t,x)\in [0,\infty)\times S^c.
	\ee 
	Take $(t,x_0)\in [0,\infty)\times S^c$. Since $S^c$ is open, we can take $h>0$ such that $[x_0-h,x_0+h]\subset S^c$. By Assumption \ref{assume.x.elliptic}(i) and $V(t,x,S)\in \Cc^{1,2}([0,\infty)\times \overline{S^c})$, $(s,x)\mapsto \Lc V(s,x,S)$ is continuous on the compact set $[t,t+1]\times\overline{B(x_0,h)}$. Then 
	\be\label{eq.bound}
	\sup_{(s,x)\in [t,t+1]\times \overline{B(x_0,h)}} |\Lc V(s,x,S)|<\infty.
	\ee
Applying Ito's formula to $V(t+s,X_s,S)$ and taking expectation, the diffusion term vanishes due to the boundedness of $V_x \sigma$ on $[t,t+1]\times \overline{B(x_0,h)}$, we have that
	\be\label{eq.generator.lv}   
	\begin{aligned}
		\E^{x_0}[V(t+\varepsilon\wedge \tau_{B(x_0,h)},X_{ \varepsilon\wedge \tau_{B(x_0,h)}},S)]-V(t,x_0,S)	=\E^{x_0}\left[\int_0^{\varepsilon\wedge\tau_{B(x_0,h)}}\Lc V(t+s, X_s,S)ds\right].
	\end{aligned}
	\ee
Meanwhile, by the continuity of $\Lc V(s,x, S)$ on $[t,t+1]\times\overline{B(x_0,h)}$,
	$$
	\lim_{\varepsilon\searrow 0}\frac{1}{\varepsilon}\int_0^{\varepsilon\wedge \tau_{B(x_0,h)}}\Lc V(t+s, X_s,S)ds=\Lc V(t,x_0,S),\quad \P^{x_0}\text{-a.s.}.$$
	Thanks to \eqref{eq.bound}, we can apply the dominated convergence theorem to above equality and get that
\begin{equation}\label{eq.generator.lvlm}    
	\lim_{\varepsilon\searrow 0}\frac{1}{\varepsilon}\E^{x_0}\Big[\int_0^{\varepsilon\wedge \tau_{B(x_0,h)}}\Lc V(t+s, X_s,S)ds\Big]=\Lc V(t,x_0,S).
\end{equation}
	On the other hand, for any $\varepsilon>0$, it is obvious that
	\be\label{eq.vtau.varep}
	\begin{aligned}
		\E^{x_0}[V(t+\varepsilon\wedge \tau_{B(x_0,h)},X_{ \varepsilon\wedge \tau_{B(x_0,h)}},S)]= \E^{x_0}[\delta(t+\rho_S)f(X_{\rho_S})]=V(t,x_0,S).
	\end{aligned}
	\ee
	Then by \eqref{eq.generator.lv}--\eqref{eq.vtau.varep}, we have that
	\bee  
	0=\lim_{\varepsilon\searrow 0}\frac{1}{\varepsilon}\left( \E^{x_0}[V(t+\varepsilon\wedge \tau_{B(x_0,h)},X_{ \varepsilon\wedge \tau_{B(x_0,h)}},S)]-V(t,x_0,S)\right)
	=\Lc V(t,x_0,S),
	\eee
	and thus \eqref{eq.lm.parta} holds.
	
	{\bf Part (b):} The existence of $\Lc V(t,x\pm,S)$ on $[0,\infty)\times \X$ follows from part (a), the differentiability of $\delta$ and Assumption \ref{assume.f.positive}(ii). Take $x_0\in \X$ and  $h>0$ such that $[x_0-h,x_0+h]\subset \X$. We show that
	\be\label{eq.lm.b} 
	\sup\limits_{(t,x)\in [0, \infty)\times \overline{B(x_0, h)}}|\Lc V(t,x-,S)|<\infty,
	\ee 
	and the result for $\Lc V(t,x+,S)$ follows from a similar argument.
	Let $x\in B(x_0, h)$. If $(x-h',x)\in S^c$ for some constant $h'>0$, 
	then by the left continuity of $y\mapsto \Lc V(t,y-,S)$ at $y=x$ and \eqref{eq.lm.lv0} in part (a), we have $\Lc V(t,x-,S)=0$. Otherwise, since $S$ is admissible, there exists $\bar{h}\in(0,h)$ such that $(x-\bar{h}, x)\subset S$, then part (a) tells that $V(t,x,S)=\delta(t)f(x)$ on $[0,\infty)\times (x-\bar{h}, x)$, and we have that
	\begin{align*}
		|\Lc V(t,x-,S)|= &\left|\delta'(t) f(x)+\delta(t)\left(b(x)f'(x-)+\frac{1}{2}\sigma^2(x) f''(x-)\right)\right|\\
		\leq &\sup\limits_{y\in \overline{B(x_0, h)}}
		\left(|\delta'(0)||f(y)|+|b(y)f'(y-)|+\frac{1}{2}\sigma^2(y) |f''(y-)| \right)
		<\infty,
	\end{align*}
where the inequality above follows from the first inequality in Lemma \ref{lm.delta.inequ}, Assumptions \ref{assume.x.elliptic}(i) and \ref{assume.f.positive}(ii). Hence, \eqref{eq.lm.b} holds.

\end{proof}

\bibliographystyle{plain}
\bibliography{reference}

\end{document}